\subjclass[2000]{14J28,14F22,14G35,11G18}
\numberwithin{equation}{subsection}
\newtheorem{theorem}{Theorem}[section]
\newtheorem{proposition}[theorem]{Proposition}
\newtheorem{lemma}[theorem]{Lemma}
\newtheorem{definition}[theorem]{Definition}
\newtheorem{corollary}[theorem]{Corollary}
\newtheorem{remarque}[theorem]{Remark}
\newcommand{\R}{\mathbb{R}}
\newcommand{\Z}{\mathbb{Z}}
\newcommand{\Q}{\mathbb{Q}}
\newcommand{\F}{\mathbb{F}}
\newcommand{\C}{\mathbb{C}}
\newcommand{\N}{\mathbb{N}}
\newcommand{\V}{\mathbb{V}}
\newcommand{\Br}{\mathrm{Br}}
\newcommand{\cD}{\mathcal{D}}
\newcommand{\cZ}{\mathcal{Z}}
\newcommand{\cY}{\mathscr{S}}
\newcommand{\cS}{\mathcal{S}}
\newcommand{\cM}{\mathcal{M}}
\newcommand{\cB}{\mathcal{B}}
\newcommand{\bad}{\mathrm{bad}}
\newcommand{\good}{\mathrm{good}}
\newcommand{\crys}{\mathrm{crys}}
\newcommand{\scrX}{\mathscr{X}}
\newcommand{\fP}{\mathfrak{P}}
\newcommand\numberthis{\addtocounter{equation}{1}\tag{\theequation}}
\DeclareMathOperator{\Spec}{Spec}
\DeclareMathOperator{\CH}{CH}
\DeclareMathOperator{\rec}{rec}
\title[]{Vanishing of Brauer classes on K3 surfaces under reduction}
\author{Davesh Maulik}
\address{Department of Mathematics, Massachusetts Institute of Technology,  77 Massachusetts Avenue, Cambridge, MA 02139}
\email{maulik@mit.edu}
\author{Salim Tayou}
\address{Department of Mathematics, Dartmouth College, 29 N. Main Street, Hanover, NH 03755, USA}
\email{salim.tayou@dartmouth.edu}
\date\today
\begin{document}
\begin{abstract}

Given a Brauer class on a K3 surface defined over a number field, we prove that there exists infinitely many reductions where the Brauer class vanishes, under certain technical hypotheses, answering a question of Frei--Hassett--V\'arilly-Alvarado.
\end{abstract}

\maketitle

\setcounter{tocdepth}{1}
\tableofcontents


\section{Introduction}
Let $X$ be a K3 surface over a number field $K$ and let $\alpha\in\mathrm{Br}(X)$ be a Brauer class on $X$. Let $\mathscr{X}\rightarrow \mathscr{S}$ be a smooth projective model, where $\mathscr{S}\hookrightarrow \mathrm{Spec}(\mathcal{O}_K)$ is an open subset of the spectrum of the ring of integers $\mathcal{O}_K$. 

For a prime $\mathfrak{P}$ of $\mathscr{S}$ where $\alpha$ is unramified, we have a reduction of $\alpha$ in the Brauer group of the reduction $\mathscr{X}_{\mathfrak{P}}$ that we denote by:  \[\alpha_{\mathfrak{P}}\in \Br(\mathscr{X}_{\mathfrak{P}})~.\]

The present paper aims to understand the following locus: 
\[\mathcal{S}(X,\alpha)=\{\mathfrak{P}\in\mathscr{S}|\, \alpha_\mathfrak{P}=0\}~,\]
and which was first investigated by Frei, Hassett, and V\'arilly-Alvarado in \cite{frei-hassett-alvarado}. To state our main result, let $\sigma:K\hookrightarrow \C$ be a complex embedding and let $T(X_{\sigma}(\C))$ be the transcendental lattice of $X_\sigma(\C)$. Let $N$ be the product of primes above which $X$ has bad reduction. In this article, we prove the following result. 
\begin{theorem}\label{main}
   Assume that the rank of $\mathrm{T}(X_\sigma(\C))$ is different from $2,4$, and that the torsion order of $\alpha$ in $\mathrm{Br}(X_{\overline{K}})$ is coprime to the discriminant of $\mathrm{T}(X_\sigma(\C))$ and $N$. Then the set $\mathcal{S}(X,\alpha)$ is infinite.
\end{theorem}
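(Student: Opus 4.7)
The plan is to interpret the vanishing of $\alpha_\fP$ as the appearance of a special endomorphism in a prescribed coset on the Kuga--Satake abelian variety of $\scrX_\fP$, and then to invoke arithmetic equidistribution on the associated GSpin Shimura variety. First, I would translate $\alpha$ into lattice-theoretic data: over $\C$, a class $\alpha \in \Br(X_\sigma)[n]$ corresponds, via the exponential sequence and Poincaré duality, to a surjection $\phi_\alpha\colon T(X_\sigma(\C)) \twoheadrightarrow \Z/n\Z$ with kernel $T_\alpha \subset T$ of index $n$. The coprimality of $n$ with $\disc(T)$ and with $N$ guarantees that the pairing on $T_\alpha$ remains perfect at primes dividing $n$ and that $\phi_\alpha$ extends to $\ell$-adic étale data over $\mathscr{S}$. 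In these terms, $\alpha_\fP = 0$ precisely when some algebraic class $\lambda \in \Pic(\scrX_{\overline{\fP}})$, orthogonal to the geometric generic Néron--Severi lattice, reduces to the element of $T^\vee/T_\alpha \simeq \Z/n\Z$ determined by $\phi_\alpha$.

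Next, I would pass to the GSpin Shimura variety. The K3-type lattice containing $T$ produces a GSpin Shimura datum with integral canonical model $\cM$, and the smooth model $\scrX$ defines a morphism $\mathscr{S} \to \cM$. Level structures compatible with the coset $\phi_\alpha^{-1}(1) \subset T^\vee/nT$ cut out a finite étale cover $\cM_\alpha \to \cM$, on which the Brauer class $\alpha$ selects a distinguished lift of the generic moduli point. The locus where $\alpha_\fP$ vanishes is then a union of Noether--Lefschetz divisors on $\cM_\alpha$ parametrizing Kuga--Satake abelian varieties admitting a special endomorphism whose associated cohomology vector lies in the prescribed coset modulo $n$.

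Then I would apply arithmetic density results for Noether--Lefschetz loci on GSpin Shimura varieties, in the vein of Charles and Shankar--Shankar--Tang--Tayou, to the arithmetic section $\mathscr{S} \to \cM_\alpha$. The hypothesis $\rank T \neq 2, 4$ is exactly the range where the GSpin group is simple of sufficient rank for these methods: in rank $2$ the Shimura datum is only zero-dimensional so the density assertion fails outright, and in rank $4$ the accidental decomposition coming from $\SO(2,2)$ gives Hilbert modular surfaces for which the existing equidistribution statements do not directly apply.

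The main obstacle is the refinement to a \emph{prescribed} coset of $T^\vee/T_\alpha$ rather than merely the appearance of any new algebraic class. Unrefined Picard-jumping results guarantee only that $\scrX_\fP$ acquires some new algebraic class at infinitely many $\fP$, not that the residue of this class in $T^\vee/T_\alpha$ is the one corresponding to $\alpha$. To control this residue, one must upgrade the equidistribution of CM points and Hecke orbits on $\cM$ to the cover $\cM_\alpha$, tracking the Galois action on its connected components via Shimura reciprocity — which is where the coprimality hypotheses between $n$, $\disc(T)$ and $N$ enter decisively, by ensuring that $\cM_\alpha$ spreads smoothly over $\mathscr{S}$ and that the reciprocity data at $\fP$ is well-defined.
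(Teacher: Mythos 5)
Your opening reductions match the paper closely: translating $\alpha$ into a coset of the transcendental lattice, moving to the Kuga--Satake abelian variety and GSpin Shimura variety, introducing a finite cover with level structure at the torsion order (the paper's $\mathcal{M}_r$), and recasting the vanishing locus as an intersection with special (Noether--Lefschetz) divisors in prescribed cosets. You also correctly single out the key difficulty: one needs to hit a \emph{prescribed} residue class rather than merely produce some new algebraic class, which is exactly why the coset-restricted divisors $\mathcal{Z}(\beta,m)$ must be used on the cover rather than the unrefined divisors $\mathcal{Z}(m)$ on the base.

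The gap is in how you propose to close that difficulty. You suggest ``upgrad[ing] the equidistribution of CM points and Hecke orbits on $\cM$ to the cover $\cM_\alpha$, tracking the Galois action on its connected components via Shimura reciprocity.'' This is not the mechanism used, and it is not clear it could work: the density results you cite (Charles, SSTT) do not proceed via CM-point or Hecke-orbit equidistribution but via Arakelov intersection theory — one shows that the global height $h_{\widehat{\mathcal{Z}}(\beta,m)}(\mathscr{S})$ grows like $c(\beta,m)$, that the archimedean contribution grows like $c(\beta,m)\log m$, and that the local non-archimedean contributions at a fixed finite set of primes cannot absorb the discrepancy, forcing the support of the intersection to be infinite. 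Moreover, one cannot simply ``apply'' SSTT-type results to $\cM_\alpha$: those results depend essentially on the Howard--Madapusi integral theory (integral models, flat special divisors, and integral Borcherds products), which is developed only for lattices maximal at $p$ and therefore does \emph{not} cover the level structure at $r$. The substantive new content of the paper — constructing a normal flat integral model $\widetilde{\mathcal{M}}_r$ over $\Z$ by normalization, extending the special divisors $\mathcal{Z}(\beta,m)$ over the primes dividing $r$ with a moduli interpretation, proving their flatness, and constructing Borcherds products whose divisors remain flat at those primes (via the vanishing of the constant $A$ in the Fourier--Jacobi expansion) — is entirely absent from your sketch, and Shimura reciprocity alone will not supply it. Your explanation of the coprimality hypotheses is likewise off: coprimality to $\disc(T)$ is what makes the lattice almost self-dual at $r$ so that the hyperspecial integral theory applies there, and coprimality to $N$ is needed so the summation set $S_{D,X}$ avoids bad-reduction primes in the non-archimedean estimates, not to make reciprocity data well-defined.

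On the rank hypothesis, your heuristic (rank $2$ gives a zero-dimensional datum, rank $4$ an $\SO(2,2)$-type datum) is a reasonable intuition, but the paper's stated reason is more concrete: the Arakelov estimates of SSTT require signature $(n,2)$ with $n\geq 3$ (i.e., transcendental rank $\geq 5$), while rank $3$ is already covered by Frei--Hassett--V\'arilly-Alvarado, leaving ranks $2$ and $4$ as the genuine exclusions.
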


\subsection{Prior work and applications}
The question of triviality of Brauer classes on smooth projective surfaces under reduction has been raised in \cite{frei-hassett-alvarado}. The authors proved {\it loc. cit.} that on a K3 surface, a Brauer class becomes trivial for a positive density of primes, when the following assumptions are satisfied: the endomorphism field $E$ of $\mathrm{T}(X_\sigma(\C))$ is totally real and $\dim_E(\mathrm{T}(X_\sigma(\C)))$ is odd. If these assumptions are not satisfied and if the Brauer class is transcendental, then as explained in the introduction of \cite{frei-hassett-alvarado},  the set in \Cref{main} has density zero, up to a finite extension of $K$, by a result of Charles \cite[Theorem 1-(1)]{charles-Picard-number}. Our result hence gives a fairly general answer to this question with no assumptions on the Hodge structure of $X$, see \cite[Remark 1.4]{frei-hassett-alvarado}. The technical conditions appearing in the theorem are artifacts of the proof and we explain their appearance in the strategy of the proof below.  
\medskip 

\Cref{main} has several applications to rationality problems of cubic fourfolds and derived equivalences of twisted K3 surfaces which have been developed in \cite{frei-hassett-alvarado}, see \S\S 1.2, 1.3. It has also the following application: 

\begin{corollary}\label{cor:splitting-elliptic-fibration}
    Let $X$ be an elliptic K3 surface over a number field $K$ which admits a multisection of degree coprime to the product of the discriminant of $\mathrm{T}(X_\sigma(\C))$ and the primes of bad reduction of $X$. Then there exists infinitely many primes $\mathfrak{P}$ where the elliptic fibration on $X_{\mathfrak{P}}$ admits a section. 
\end{corollary}

\Cref{main} admits also a natural formulation over the complex numbers and in this case it follows from the results of \cite[\S 17.3]{voisin}, which can be furthermore sharpened into an equidistribution type statement in the spirit of \cite{tayouequi,tayoutholozan}. We simply formulate the statement here. 
\begin{theorem}
Let $\mathscr{X}\rightarrow \mathscr S$ be a non-isotrivial smooth projective family of K3 surfaces over a complex quasi-projective algebraic variety $S$ and let $\alpha$ be a Brauer class on the generic fiber $\mathscr{X}_\eta$. Then the locus in $\mathscr S$ where the reduction of the Brauer class $\alpha$ vanishes is analytically dense and equidistributed with respect to the metric given by the Chern form of the Hodge bundle. 
\end{theorem}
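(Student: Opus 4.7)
The plan is to reinterpret the vanishing locus of $\alpha$ inside $S$ as a countable union of twisted Noether--Lefschetz loci for the variation of Hodge structure $R^2\pi_*\mathbb{Z}$, and then apply Voisin's density theorem together with an equidistribution argument in the spirit of \cite{tayouequi, tayoutholozan}.

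The first step is a Hodge-theoretic reformulation. Passing to a finite étale cover of $S$ (which affects neither density nor equidistribution), we may assume the local system $R^2\pi_*\mu_n$ is trivial, where $n$ is the order of $\alpha$, and that $\alpha$ lifts to a global section $\tilde{\alpha}$ of this local system via the Kummer sequence
\[ 0 \to \mathrm{NS}(\mathscr{X}_s)/n \to H^2(\mathscr{X}_s, \mu_n) \to \mathrm{Br}(\mathscr{X}_s)[n] \to 0.\]
For each $s \in S$, the relation $\alpha_s = 0$ is then equivalent to the existence of an integral Hodge class $\beta_s \in H^2(\mathscr{X}_s, \mathbb{Z})$ whose reduction modulo $n$ equals $\tilde{\alpha}_s$. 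The vanishing locus is thus the union, over all classes $\beta$ in the fixed affine coset $\tilde{\alpha} + n H^2$, of the Hodge loci attached to $\beta$.

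For analytic density, each such Hodge locus is a closed analytic subvariety of $S$, cut out locally by the vanishing of the $(2,0)$-component $\langle \beta, \omega_s\rangle$ of a flat transport of $\beta$. Non-isotriviality of the family guarantees a nonzero period differential, so the classical density argument of \cite[\S17.3]{voisin} applies: given any open $V \subset S$ and any $s \in V$, we find classes $\beta$ in the prescribed coset whose Hodge locus meets $V$, by solving $\langle \beta, \omega_{s'}\rangle = 0$ for $s'$ near $s$. The infinitude of admissible $\beta$ in the coset is immediate (it is a full affine sublattice of $H^2$), and Voisin's argument transfers verbatim to the coset setting.

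For equidistribution, one sums the currents of integration over the Hodge loci attached to classes $\beta$ in the coset with $|\beta^2| \le T$, and then takes the limit $T \to \infty$ after dividing by the appropriate growth factor. Writing this sum as a theta-like series with an additive character encoding the coset, the archimedean analysis of \cite{tayouequi, tayoutholozan} identifies the limiting $(1,1)$-current with the Chern form of the Hodge bundle on $S$. The main obstacle lies in this last step: one must extend the equidistribution machinery to accommodate the nontrivial character arising from the coset. The archimedean kernel governing the local limit is unaffected by the character, so the bulk of the argument transfers directly, but the counting estimates require minor bookkeeping adjustments.
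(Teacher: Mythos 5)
The paper states this theorem without proof: the authors remark that it ``follows from the results of \cite[\S 17.3]{voisin}'' and ``can be furthermore sharpened into an equidistribution type statement in the spirit of \cite{tayouequi,tayoutholozan}'', then ``simply formulate the statement''. Your sketch is precisely the argument those citations point to, and it is structurally sound: reinterpret $\{s : \alpha_s = 0\}$ as the union, over $\beta$ in the fixed coset $\tilde\alpha + nH^2$, of Noether--Lefschetz loci; get density from Voisin's open-cone argument (the set of real classes $\beta$ killed by $\omega_{s'}$ for some $s'$ in a small neighborhood is an open cone with nonempty interior whenever the period map is nonconstant, and a full-rank affine sublattice meets every such cone); and get equidistribution by summing currents of integration over the coset and comparing with the Kudla--Millson / Eisenstein analysis of \cite{tayouequi,tayoutholozan}.

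Two points worth tightening. First, for density, working with all of $H^2$ is fine for existence of a Hodge class, but the actual period variation lives in the transcendental lattice; it is cleanest to project and work with the transcendental sublattice throughout, noting that the coset projects to a coset there. Second, you acknowledge but do not fully justify the equidistribution step: passing from a full lattice to a coset replaces the scalar Kudla--Millson theta kernel by one component of the vector-valued kernel for the Weil representation of $L^\vee/L$ (or $L^\vee/nL$), which is exactly the framework in which the special divisors $\mathcal{Z}(\beta,m)$ of the paper are defined. The archimedean kernel, and hence the limiting $(1,1)$-current, is coset-independent, but the normalizing factor becomes the $(\beta,m)$-Fourier coefficient of the associated vector-valued Eisenstein series (the $c(\beta,m)$ appearing in the paper) rather than the full representation count. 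Spelling out that substitution, and verifying the resulting asymptotics, is the ``minor bookkeeping'' you allude to; it is genuinely routine given \cite{tayouequi,tayoutholozan}, but it should be stated explicitly.
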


\subsection{Strategy of the proof}
The proof of \Cref{main} relies on Arakelov intersection theory on integral models of toroidal compactifications of GSpin Shimura varieties as developed in \cite{sstt} and \cite{tayouboundary} and is local-global in nature. If we denote by $r$ the torsion order of $\alpha$ in $\mathrm{Br}(X_{\overline{K}})$, also called \emph{the geometric torsion order of $\alpha$}, then we interpret the locus $\cS(X,\alpha)$ as an intersection locus of $\mathscr{S}$ with a family of special divisors in a Shimura variety $\widetilde{\mathcal{M}}_r$ with level structure of level $r$. Following a method initiated by Charles in \cite{charles-exceptional-isogenies} and generalized in \cite{sstt,tayouboundary} (see also \cite{maulik-shankar-tang-K3,shankar-tang,maulik-shankar-tang} for the function field setting), we control the global and local intersection numbers of $\mathscr{S}$ with a sequence of special divisors indexed by integers $m$ at archimedean and non-archimedean places and compare the order of growths. If there were only finitely many primes where $\alpha$ vanishes, then this means that the intersection is supported at finitely many primes independent of $m$. As $m$ grows, we get a contradiction by comparing the order of growths of the local and the global estimates.  

In performing this strategy, many new difficulties arise compared to the previous work \cite{sstt,tayouboundary}. The Shimura variety $\widetilde{\mathcal{M}}_r$ that we work with has level structure $r$ and hence the results of \cite{howardmadapusi} do not apply directly at the primes that divide $r$. We have thus to construct a suitable integral model $\widetilde{\mathcal{M}}_r$ which is ad hoc for our purposes and which is defined by a normalization process, similar to the one considered in \cite{madapusitor}.  In particular, it does not admit a moduli interpretation over the primes that divide $r$. The assumption of $r$ being coprime to the discriminant of $\mathrm{T}(X_\sigma(\C))$ is crucial as it shows that $\widetilde{\mathcal{M}}_r$ admits a finite map to the integral model constructed by Kisin \cite{kisin} (see also \cite{madapusiintegral}), which is smooth at the primes dividing $r$. In particular, we obtain by pullback an abelian scheme over $\widetilde{\mathcal{M}}_r$ as well as a system of realizations ($\ell$-adic, crystalline, de Rham).  We use these to define special divisors and show their compatibility with the pullback of the divisors appearing in \cite{howardmadapusi}.

The second difficulty in our paper is to construct a suitable Borcherds product whose divisor only involve the special divisors that we have constructed. We construct such a Borcherds product and we show that \cite[Theorem A]{howardmadapusi} extends over primes dividing $r$.  This allow us to derive the global estimate on the intersection number of $\mathscr S$ with a well chosen sequence of special divisors. The assumption on $r$ is used again to compare the special divisors and the Borcherds products with those from the smooth integral model via a flatness argument. In our approach, we do not prove that the resulting generating series of special divisors is a modular form, a result not needed for our purposes and which requires a deeper understanding of the integral models with full level structure $r$. To obtain the local estimates, we use a compatibility result between our special divisors and the divisors defined in \cite{howardmadapusi} to bound the local contributions using the estimates already appearing in \cite{sstt,tayouboundary}. The additional assumptions on $r$ are used at this level to allow us to input the estimates from \cite{sstt} and \cite{tayouboundary}.


\subsection{Organization of the paper}
In \S 2, we explain how to reduce \Cref{main} to an intersection theoretic statement in GSpin Shimura varieties. In \S 3, we introduce GSpin Shimura varieties, their integral models, special divisors and toroidal compactifications. We construct suitable integral models with level structure at $r$ and use them to write down the local and global estimates needed from Arakelov theory. In \S 4 we prove the global estimate on the intersection number using a well-chosen Borcherds products. In \S 5 we estimate the archimedean contributions, and in \S 6 we estimate the non-archimedean contributions. 

\subsection{Acknowledgement}
We thank Ananth Shankar and Yunqing Tang for useful conversations and the referees for detailed comments that helped clarify some of our arguments. We thank Igor Dolgachev for pointing out the application in \Cref{cor:splitting-elliptic-fibration} and Daniel Loughran for a helpful correspondence. S.T. was supported by NSF grant DMS-2302388 and NSF grant DMS-2503815.

\section{K3 surfaces and Brauer classes: some reductions}
We prove in this section how to deduce \Cref{main} from \Cref{principal}. Then we explain the connection to special quasi-endomorphisms on Kuga-Satake abelian varieties. For a detailed discussion on Brauer classes and K3 surfaces, we refer to \cite[Chapter 18]{huybrechts} and \cite[\S 4]{frei-hassett-alvarado}. 
\subsection{Background results}

Let $X$ be a K3 surface over a number field $K$ and let $\alpha\in\mathrm{Br}(X)$. Let $\sigma: K\hookrightarrow \C$ be a complex embedding and let $(L,Q)$ be the transcendental lattice of the complex K3 surface $X_{\sigma}(\C)$. Then we have the following maps between the different Brauer groups: 
\[\mathrm{Br}(X)\rightarrow \mathrm{Br}(X_{\overline{K}})\simeq \mathrm{Br}(X_\sigma)\subset \mathrm{Br}^{an}(X_\sigma({\C}))~,\]
where $\mathrm{Br}^{an}(X_\sigma({\C}))$ is the analytic Brauer group of $X_\sigma(\C)$. By a theorem of Gabber, the Brauer group $\mathrm{Br}(X_{\overline{K}})$ is torsion and is in fact equal to the torsion part of the analytic group $\mathrm{Br}^{an}(X_\sigma({\C}))$. 

It follows that the image of the class $\alpha$ in $\mathrm{Br}^{an}(X_\sigma(\C))$ is torsion of order $r\geq 1$, which may be different from its torsion order in $\mathrm{Br}(X)$. We refer to $r$ as the \emph{geometric torsion order} of $\alpha$. 
\medskip 

The class $\alpha$ admits B-lifts to $H^2(X_\sigma(\C),\Q)$ that we now recall following \cite[Page 415]{huybrechts}. From the exponential exact sequence, we get the exact sequence
\[0\rightarrow H^2(X_\sigma(\C),\Z)/\mathrm{Pic}(X_\sigma(\C))\rightarrow H^2(X_\sigma(\C),\mathcal{O}_{X_\sigma(\C)})\rightarrow \mathrm{Br}^{an}(X_{\sigma}(\C))\rightarrow 0~.\]
Since the torsion part of $\mathrm{Br}^{an}(X_\sigma (\C))$ is equal to $\Br(X_\sigma)$, we get:
\begin{align*}
    0\rightarrow H^2(X_\sigma(\C),\Z)+\mathrm{NS}(X_\sigma(\C))_\Q\rightarrow H^2(X_\sigma(\C),\Q)\rightarrow \Br(X_\sigma)\rightarrow 0,
\end{align*}
yielding an isomorphism 
\[\mathrm{Hom}_\Z(L,\Q/\Z)\simeq \mathrm{Br}(X_\sigma)~.\]

In particular, the $r$-torsion sub-groups are isomorphic: 
\[\frac{1}{r}L^{\vee}/L^{\vee}\simeq \mathrm{Hom}_\Z(L,\frac{1}{r}\Z/\Z)\simeq \mathrm{Br}(X_\sigma)[r].\]

Let $\beta \in \frac{1}{r}L^{\vee}/L^{\vee}$ be a preimage of $\alpha$. To summarize, we have proven the following proposition. 
\begin{proposition}\label{lifts}
    Let $L$ be the transcendental lattice of $X_\sigma(\C)$ and let $\alpha\in\mathrm{Br}(X)$ be a Brauer class of geometric torsion order $r\geq 1$. Then there exists $\beta\in \frac{1}{r}L^{\vee}/L^\vee$ which corresponds to the image of $\alpha$ in $\mathrm{Br}(X_{\overline{K}})[r]$.
\end{proposition}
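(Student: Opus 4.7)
The plan is to reconstruct the $r$-torsion of the analytic Brauer group of $X_\sigma(\C)$ in terms of the discriminant form on the transcendental lattice, then invoke Gabber's theorem to pass to the algebraic Brauer group over $\overline{K}$, and finally lift $\alpha$.

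First I would start from the exponential exact sequence $0\to\Z\to\mathcal{O}_{X_\sigma(\C)}\to\mathcal{O}^*_{X_\sigma(\C)}\to 0$ on the analytic K3. Taking cohomology and using that $H^3(X_\sigma(\C),\Z)=0$ and $\mathrm{Pic}(X_\sigma(\C))=\mathrm{NS}(X_\sigma(\C))$, I obtain the short exact sequence
\[0\to H^2(X_\sigma(\C),\Z)/\mathrm{NS}(X_\sigma(\C))\to H^2(X_\sigma(\C),\mathcal{O}_{X_\sigma(\C)})\to \Br^{an}(X_\sigma(\C))\to 0.\]
Since $H^2(X_\sigma(\C),\mathcal{O}_{X_\sigma(\C)})\cong\C$ is divisible and torsion-free while the image of the integral cohomology is a full-rank lattice, taking torsion amounts to extending scalars to $\Q$ inside the first term and killing $\mathrm{NS}_\Q$. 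Combined with Gabber's theorem, which identifies $\Br(X_\sigma)\simeq\Br(X_{\overline K})$ with the torsion subgroup of $\Br^{an}(X_\sigma(\C))$, this yields
\[\Br(X_\sigma)\simeq H^2(X_\sigma(\C),\Q)\,/\,\bigl(H^2(X_\sigma(\C),\Z)+\mathrm{NS}(X_\sigma(\C))_\Q\bigr).\]

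Next I would exploit the orthogonal decomposition $H^2(X_\sigma(\C),\Q)=L_\Q\oplus\mathrm{NS}(X_\sigma(\C))_\Q$ together with the unimodularity of the K3 lattice $H^2(X_\sigma(\C),\Z)$. The key lemma is that under these hypotheses the projection $\pi_L\colon H^2(X_\sigma(\C),\Q)\to L_\Q$ sends the integral cohomology exactly onto the dual lattice $L^\vee$: the inclusion $\pi_L(H^2(X_\sigma(\C),\Z))\subset L^\vee$ is immediate from the definition of $L^\vee$, while the reverse inclusion uses unimodularity of $H^2$ and primitivity of $\mathrm{NS}$. This identifies $\Br(X_\sigma)$ with $L_\Q/L^\vee$. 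Then the exact sequence $0\to L^\vee\to L_\Q\to \Hom_\Z(L,\Q/\Z)\to 0$, obtained by applying $\Hom_\Z(L,-)$ to $0\to\Z\to\Q\to\Q/\Z\to 0$ and using the pairing $Q$ to identify $L^\vee=\Hom_\Z(L,\Z)$, gives the canonical isomorphism $\Br(X_\sigma)\simeq\Hom_\Z(L,\Q/\Z)$.

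Finally, restricting to $r$-torsion yields $\Br(X_\sigma)[r]\simeq\frac{1}{r}L^\vee/L^\vee$. By definition of the geometric torsion order, the image of $\alpha$ in $\Br(X_{\overline K})\simeq\Br(X_\sigma)$ lies in $\Br(X_\sigma)[r]$, so I take $\beta$ to be any preimage in $\frac{1}{r}L^\vee/L^\vee$. The only nontrivial step is the identification $\pi_L(H^2(X_\sigma(\C),\Z))=L^\vee$; this is the main obstacle, and it is ultimately a lattice-theoretic consequence of the fact that for a primitive sublattice of a unimodular lattice, the orthogonal projection of the ambient lattice onto the rational span of the sublattice is exactly the dual of the orthogonal complement.
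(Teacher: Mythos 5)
Your proposal is correct and follows essentially the same route as the paper: exponential sequence, Gabber's theorem identifying $\Br(X_{\overline K})$ with the torsion of $\Br^{an}(X_\sigma(\C))$, and then identification with $\Hom_\Z(L,\Q/\Z)$. The only difference is that you spell out the lattice-theoretic step $\pi_L(H^2(X_\sigma(\C),\Z))=L^\vee$ (surjectivity onto the dual via unimodularity of the K3 lattice and primitivity of $\mathrm{NS}$), which the paper leaves implicit when passing from the short exact sequence to the isomorphism $\Hom_\Z(L,\Q/\Z)\simeq\Br(X_\sigma)$; note your closing phrase ``the dual of the orthogonal complement'' is slightly garbled, but the correct statement appears earlier in your argument.
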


\subsection{Compatibility with reductions}
We keep the notations from the previous section and let $\mathscr{X}\rightarrow \mathscr{S}$ be a smooth projective model of $X$ where $ \mathscr{S}\hookrightarrow \mathrm{Spec}(\mathcal{O}_K)$ is a Zariski open subset. 
\medskip 

Let $\mathfrak{P}$ be a prime of good reduction for $X$, i.e., in $\mathscr{S}$, where the Brauer class $\alpha$ is unramified and has torsion $r$ coprime to the residual characteristic of $\mathfrak{P}$. This excludes only finitely many primes.  


The Kummer exact sequence yields the following commutative diagram, where the middle vertical arrow is an isomorphism by smooth base change theorem:
\begin{align*}
\xymatrix{
 0 \ar[r] & \mathrm{NS}(X_{\overline{K}})\otimes \Z/r\Z\ar[r]\ar[d] & H^{2}_{\textrm{ét}}(X_{\overline{K}},\mu_r)\ar[r]\ar[d] & \Br(X_{\overline{K}})[r]\ar[r]\ar[d] & 0\\
0\ar[r] & \mathrm{NS}(\scrX_{\overline{\fP}})\otimes \Z/r\Z\ar[r] & H^{2}_{\textrm{ét}}(\scrX_{\overline{\fP}},\mu_r)\ar[r] & \Br(\scrX_{\overline{\fP}})[r]\ar[r] & 0
}
\end{align*}

By Artin's comparison theorem \cite[XVI 4]{grothendiecksga4}, for every prime number $\ell$, we have a natural $\mathrm{Gal}(\overline{K}/K)$-module $L_{\Z_\ell}$ in $H^{2}_{\textrm{ét}}(X_{\overline{K}},\Z_\ell(1))$. By compatibility of Poincar\'e pairings in Betti and \'etale cohomology, the dual lattice of $L_{\Z_\ell}$ is equal to $L^{\vee}_{\Z_\ell}$. If $\ell$ does not divide the discriminant of $L$, then in fact $L$ is self-dual at $\ell$ and $L^{\vee}_{\Z_\ell}=L_{\Z_\ell}$. 
\medskip 

Let $\alpha\in\mathrm{Br}(X)$ be a Brauer class of geometric torsion order $r$. By \Cref{lifts}, there exists $\beta\in \frac{1}{r}L^{\vee}/L^{\vee}$ that lifts $\alpha$. For every prime number $\ell$, we let $\beta_\ell\in \frac{1}{r}L^{\vee}/L^{\vee}\otimes \Z_\ell $ denote the $\ell$-adic component of $\beta$.  If $\ell$ is coprime to $r$, then $\beta_\ell=0$ and if $\ell$ is coprime to the discriminant of $L$, then 
\[\beta_\ell\in \frac{1}{r}L^{\vee}/L^{\vee}\otimes \Z_\ell\simeq \frac{1}{r}L/L\otimes \Z_\ell\simeq L_{\Z_\ell}/rL_{\Z_\ell}~.\]

\Cref{main} is then a consequence of the following statement. 
\begin{theorem}\label{principal}
Assume that $r$ is coprime to the discriminant of $L$. Then there exist infinitely many prime ideals $\mathfrak{P}$ such that there exists $\lambda\in \mathrm{Pic}(\mathscr{X}_{\overline{\mathfrak{P}}})$ which satisfies the following: for every prime $\ell$ coprime to $\mathfrak{P}$, the image of $\lambda$ under the isomorphism 
\[ H^{2}_{\textrm{ét}}(\scrX_{\overline{\mathfrak{\fP}}},\Z_\ell(1))\simeq H^{2}_{\textrm{ét}}(X_{\overline{K}},\Z_\ell(1)),\]
lies in $L_{\Z_\ell}$ and the residue class of $\lambda$ in $L_{\Z_\ell}/rL_{\Z_\ell}$ is equal to $\beta_\ell$. 
\end{theorem}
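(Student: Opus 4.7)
The plan is to translate the statement into an arithmetic intersection problem on an integral model of a GSpin Shimura variety carrying a level structure at $r$, and then to execute the archimedean/non-archimedean comparison argument pioneered in \cite{charles-exceptional-isogenies} and developed in \cite{sstt}. First I would use the Kuga--Satake construction associated to the transcendental lattice $L$ of rank $n+2 \geq 5$: after possibly replacing $K$ by a finite extension, the smooth projective family $\mathscr{X} \to \mathscr{S}$ induces a morphism $\mathscr{S} \to \mathscr{M}$ to the integral GSpin Shimura variety of signature $(n,2)$. The class $\beta \in \tfrac{1}{r}L^\vee/L^\vee$ (which exists by \Cref{lifts}) determines a level subgroup $\Gamma_\beta \subset \mathrm{GSpin}(\hat L)$ stabilizing $\beta$, and because $\beta$ corresponds to a fixed topological datum attached to $\alpha$, the morphism refines, after a further finite extension of $K$, to $\mathscr{S} \to \mathscr{M}_{\Gamma_\beta}$. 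The rank-$3$ case is handled separately by \cite{frei-hassett-alvarado} and is excluded from the Shimura-theoretic part of the argument.

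The second step is geometric: a prime $\mathfrak{P} \in \mathscr{S}$ satisfies the conclusion of the theorem precisely when the image of $\mathfrak{P}$ in $\mathscr{M}_{\Gamma_\beta}$ lies on one of the \emph{special divisors} $\mathscr{Z}(m,\beta)$ indexed by $m \in \Q_{>0}$. These divisors parametrize Kuga--Satake abelian varieties carrying an extra quasi-endomorphism of norm $m$ whose image in $L/rL$ realizes $\beta$; under the Kuga--Satake dictionary, such a quasi-endomorphism corresponds to a new class $\lambda \in \mathrm{Pic}(\mathscr{X}_{\overline{\mathfrak{P}}})$ whose $\ell$-adic Chern class has the prescribed residue $\beta_\ell$ in $L_{\Z_\ell}/rL_{\Z_\ell}$ for every $\ell$ coprime to $\mathfrak{P}$. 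The theorem therefore reduces to showing that the image of $\mathscr{S}$ meets $\bigcup_{m>0}\mathscr{Z}(m,\beta)$ at infinitely many primes.

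I would then argue by contradiction: suppose this intersection is supported at a finite set $T$ of primes. After constructing suitable integral models of the toroidal compactification $\overline{\mathscr{M}}_{\Gamma_\beta}$ and equipping the special divisors with Kudla--Millson Green currents, one obtains well-defined arithmetic intersection numbers $\mathscr{S} \cdot \widehat{\mathscr{Z}}(m,\beta)$. Three estimates then need to be combined: a \emph{global estimate} coming from Borcherds products — sections of powers of the line bundle of modular forms whose divisors are explicit weighted sums of the $\mathscr{Z}(m,\beta)$ — which forces a weighted sum $\sum_{m \leq M} a_m\, \mathscr{S} \cdot \widehat{\mathscr{Z}}(m,\beta)$ to grow like $\Theta(M^{n/2})$; an \emph{archimedean estimate} bounding the contribution at infinity of each summand by $O(m^{n/2-1} \log m)$; and, under the assumption that $T$ is finite, \emph{non-archimedean estimates} after \cite{sstt} bounding the contribution at each fixed $\mathfrak{P} \in T$ by $O(\log m)$. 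Summing over $m \leq M$, the latter two contributions are $o(M^{n/2})$, contradicting the global lower bound.

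The main obstacle — and the technical heart of the paper — is that the Howard--Madapusi integral models are constructed at hyperspecial level, whereas the problem forces us to work with level structure $\Gamma_\beta$ of depth $r$. I would therefore spend most of the effort constructing ad hoc integral models of $\mathscr{M}_{\Gamma_\beta}$ and of its toroidal compactification, extending the definition of the special divisors $\mathscr{Z}(m,\beta)$ across primes dividing $r$ in a way compatible with \cite{howardmadapusi} away from $r$, and exhibiting Borcherds products with controlled behavior at these primes. The crucial verification is that \cite[Theorem A]{howardmadapusi} — the height identity relating the global arithmetic degree of a Borcherds product to sums of local intersections with special divisors — extends, for the particular Borcherds products we need, across primes above $r$. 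The coprimality of $r$ with $\disc(L)$ and with the primes of bad reduction encoded in $N$ is precisely what makes this extension tractable without having to prove the full modularity of the generating series of special divisors in level $\Gamma_\beta$.
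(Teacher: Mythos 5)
Your proposal follows the paper's strategy in its essentials: reduce the statement to the non-vanishing of the group $V_\beta(\mathcal{A}_{\overline{\mathfrak{P}}})$ of special endomorphisms of the Kuga--Satake abelian variety (equivalently, to $\mathscr{S}$ meeting special divisors $\mathcal{Z}(\beta,m)$ on an integral model of a GSpin Shimura variety with level at $r$), argue by contradiction, and play a global Borcherds-product estimate against archimedean and non-archimedean local estimates, with the construction of integral models and special divisors across primes dividing $r$ being the principal technical task. The coprimality hypotheses enter exactly where you say they do.

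The gap is in the bookkeeping of the three estimates, which as you have arranged them would not yield a contradiction. You take the global estimate to be a lower bound $\Theta(M^{n/2})$ for a weighted sum, and the archimedean contribution to be a small error $O(m^{n/2-1}\log m)$ per summand. Both assignments are wrong, and even granting them, $\sum_{m\leq M} O(m^{n/2-1}\log m) = O(M^{n/2}\log M)$, which is \emph{not} $o(M^{n/2})$, so the claimed contradiction does not materialize. In the paper the global estimate is an \emph{upper} bound $h_{\widehat{\mathcal{Z}}(\beta,m)}(\mathscr{S}) = O(c(\beta,m)) = O(m^{n/2})$, with no $\log$ factor; the archimedean Green function is the \emph{dominant} term, $\Phi_{\beta,m}(x)\sim c(\beta,m)\log m \asymp -m^{n/2}\log m$; and the non-archimedean side, being the difference, is therefore forced to be $\gg m^{n/2}\log m$, which is precisely what the assumption of finite support prohibits. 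Moreover, the sums are taken not over all $m\leq M$ but over a thin dyadic set $S^{\mathrm{good}}_{D,X}$ of size $\asymp X^{1/2}$ inside $[X,2X)$ consisting of $m$ with $\sqrt{m/D}\in\Z$, $(m,N)=1$, and $m$ representable by $\beta+rL$; this restriction is needed both to guarantee $|c(\beta,m)|\gg X^{n/2}$ uniformly (\Cref{representation}) and to invoke the averaged non-archimedean bounds of \cite{sstt} and \cite{tayouboundary}. You would need to redo this portion with the correct sizes and signs of each term.
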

\subsection{Proof of \Cref{main}}
Assuming \Cref{principal}, we will prove in this section \Cref{main}. Let $\mathfrak{P}$ be a prime ideal given by \Cref{principal}, and where $\alpha$ is unramified. Let $r$ be the geometric torsion order of $\alpha$, which is coprime to $p$, the residual characteristic of $\mathfrak{P}$. We have then the following diagram, where the middle vertical arrow is an isomorphism by proper and smooth base change theorem: 
\begin{align*}
\xymatrix{\oplus_{\ell  \vert r}L_{\Z_{\ell}} \subset \oplus_{\ell\vert r} H^{2}_{\textrm{ét}}(X_{\overline{K}},\Z_\ell(1)) \ar[r]\ar[d] & \Br({X}_{\overline{K}})[r]\ar[d]\ar[r]&0\\ 
(\lambda)_{\ell\vert r} \in\oplus_{\ell\vert r} H^{2}_{\textrm{ét}}(\mathscr{X}_{\overline{\mathfrak{P}}},\Z_\ell(1)) \ar[r] & \Br(\mathscr{X}_{\overline{\mathfrak{P}}})[r]\ar[r]& 0, 
}
\end{align*}

By construction, the image of $\lambda$ in $\Br({X}_{\overline{K}})[r]$ is equal to $\alpha$. By commutativity of the diagram, this implies that the image of $\lambda$ in 
$\Br(\mathscr{X}_{\overline{\mathfrak{P}}})[r]$ is equal to the reduction $\alpha_\mathfrak{P}$ of $\alpha$. Since $\lambda \in \mathrm{NS}(\mathscr{X}_{\overline{\mathfrak{P}}})$, we can conclude that $\alpha_\mathfrak{P}=0$ in $\Br(\mathscr{X}_{\overline{\mathfrak{P}}})[r]$. Finally, we use the following lemma, which is taken from \cite[Lemma 4.4]{frei-hassett-alvarado}, to conclude.
\begin{lemma}
We have $\alpha_{\mathfrak{P}}=0$ in $\Br(\mathscr{X}_{\mathfrak{P}})$. 
\end{lemma}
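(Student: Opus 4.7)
The plan is to descend the vanishing $\alpha_{\overline{\mathfrak{P}}}=0$ established above to the special fiber over the residue field via the Hochschild--Serre spectral sequence. Let $k=\mathcal{O}_K/\mathfrak{P}$ be the (finite) residue field at $\mathfrak{P}$ and set $\Gamma=\Gal(\overline{k}/k)\simeq\widehat{\Z}$. Since $k$ is finite, $\Br(k)=0$ by Wedderburn, and $\Gamma$ has cohomological dimension one, so the Hochschild--Serre spectral sequence for the structure morphism $\mathscr{X}_{\mathfrak{P}}\to\Spec(k)$ with coefficients in $\mathbb{G}_m$ degenerates into the short exact sequence
\[ 0\to H^1(\Gamma,\Pic(\mathscr{X}_{\overline{\mathfrak{P}}}))\to \Br(\mathscr{X}_{\mathfrak{P}})\to \Br(\mathscr{X}_{\overline{\mathfrak{P}}})^{\Gamma}\to 0. \]
The preceding argument shows that $\alpha_{\overline{\mathfrak{P}}}=0$, hence $\alpha_{\mathfrak{P}}$ lies in the subgroup $H^1(\Gamma,\Pic(\mathscr{X}_{\overline{\mathfrak{P}}}))[r]$, and it remains to prove that this particular class vanishes.

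For this second step I would compare the Kummer short exact sequences at level $r$ on $\mathscr{X}_{\mathfrak{P}}$ and $\mathscr{X}_{\overline{\mathfrak{P}}}$. Since K3 surfaces satisfy $H^1_{\textrm{ét}}(\mathscr{X}_{\overline{\mathfrak{P}}},\mu_r)=0$, Hochschild--Serre yields an isomorphism $H^2_{\textrm{ét}}(\mathscr{X}_{\mathfrak{P}},\mu_r)\simeq H^2_{\textrm{ét}}(\mathscr{X}_{\overline{\mathfrak{P}}},\mu_r)^{\Gamma}$, while Hilbert~90 combined with $\Br(k)=0$ gives $\Pic(\mathscr{X}_{\mathfrak{P}})\simeq \Pic(\mathscr{X}_{\overline{\mathfrak{P}}})^{\Gamma}$. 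A diagram chase then shows that $\alpha_{\mathfrak{P}}=0$ in $\Br(\mathscr{X}_{\mathfrak{P}})[r]$ is equivalent to lifting the class $[\lambda]\in(\Pic(\mathscr{X}_{\overline{\mathfrak{P}}})/r)^{\Gamma}$ to $\Pic(\mathscr{X}_{\overline{\mathfrak{P}}})^{\Gamma}/r$. The long exact cohomology sequence attached to $0\to \Pic\xrightarrow{r}\Pic\to \Pic/r\to 0$ identifies the obstruction to such a lift with the image of $[\lambda]$ in $H^1(\Gamma,\Pic(\mathscr{X}_{\overline{\mathfrak{P}}}))[r]$, and this coincides with the kernel class identified above.

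The hard part will be verifying that this obstruction class vanishes. A cocycle representative sends $\sigma\mapsto(\sigma\lambda'-\lambda')/r$ for some choice of lift $\lambda'\in\Pic(\mathscr{X}_{\overline{\mathfrak{P}}})$ of $[\lambda]$, and one needs to produce a choice that makes this a coboundary. The essential point is that $\alpha$ itself is defined over $K$: combined with the $\ell$-adic comparison isomorphisms and proper--smooth base change, the class $[\lambda]$ descends from the Galois-invariant data $(\beta_\ell)_{\ell\mid r}$ coming from the generic fiber. Exploiting this equivariance, together with the fact that Frobenius acts with finite order on $\Pic(\mathscr{X}_{\overline{\mathfrak{P}}})$ for a K3 of good reduction (so $H^1(\Gamma,\Pic(\mathscr{X}_{\overline{\mathfrak{P}}}))$ is a finite torsion group), one produces a $\Gamma$-invariant representative of $[\lambda]$ modulo $r$, which is the content of \cite[Lemma 4.4]{frei-hassett-alvarado}.
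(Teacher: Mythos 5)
The paper does not actually prove this lemma: it is invoked verbatim as Lemma~4.4 of Frei--Hassett--V\'arilly-Alvarado, with no argument given. Your Hochschild--Serre setup is a correct and useful unpacking of the underlying structure. Using $\Br(k)=0$ (Wedderburn) and $\mathrm{cd}(\Gamma)=1$ for $\Gamma=\Gal(\overline{k}/k)\simeq\widehat{\Z}$, you correctly identify the kernel of $\Br(\mathscr{X}_{\mathfrak{P}})\to\Br(\mathscr{X}_{\overline{\mathfrak{P}}})$ with $H^1(\Gamma,\Pic(\mathscr{X}_{\overline{\mathfrak{P}}}))$; using $H^1_{\textrm{\'et}}(\mathscr{X}_{\overline{\mathfrak{P}}},\mu_r)=0$ and Hilbert~90 you correctly match the Kummer sequences over $k$ and $\overline k$; and the reduction of the vanishing of $\alpha_{\mathfrak{P}}$ to the lifting problem for $[\lambda]$ from $(\Pic(\mathscr{X}_{\overline{\mathfrak{P}}})/r)^{\Gamma}$ to $\Pic(\mathscr{X}_{\overline{\mathfrak{P}}})^{\Gamma}/r$, with obstruction given by the connecting map of the multiplication-by-$r$ sequence, is accurate.

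What you have not done is close the argument. The observation that the Galois action on $\Pic(\mathscr{X}_{\overline{\mathfrak{P}}})$ factors through a finite quotient, so that $H^1(\Gamma,\Pic(\mathscr{X}_{\overline{\mathfrak{P}}}))$ is a finite torsion group, carries no information about whether the \emph{specific} obstruction class attached to $[\lambda]$ vanishes: finite cohomology groups are not trivial in general (already $H^1(\Z/2,\Z)\neq 0$ for the sign action, and $\Z/2$ can occur as a Galois quotient acting nontrivially on N\'eron--Severi). The assertion that the equivariance of the $\ell$-adic data $(\beta_\ell)$ ``produces a $\Gamma$-invariant representative of $[\lambda]$ modulo $r$'' is precisely the nontrivial step you set out to supply, yet your proof closes by attributing it to \cite[Lemma 4.4]{frei-hassett-alvarado} --- the very citation the paper uses for the whole lemma. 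As written, the argument is therefore circular at the decisive point; the framework you built is sound scaffolding around the citation, but it is not an independent proof, and you would need to reproduce the actual content of FHVA's Lemma~4.4 to establish why this particular class in $H^1(\Gamma,\Pic(\mathscr{X}_{\overline{\mathfrak{P}}}))[r]$ is zero.
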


\subsection{Special endomorphisms on Kuga-Satake abelian varieties}
We explain in this section our strategy for proving \Cref{principal}. By \cite[Theorem 3]{madapusiperatate} (and \cite{kimmadapusipera,ito-ito-koshikawa} when the characteristic is equal to $2$), up to a harmless extension of the number field $K$, we can associate to $X$ an abelian variety $A$ defined over $K$,  the \emph{Kuga-Satake} abelian variety such that for any prime $\mathfrak{P}$ of good reduction for $X$ and $A$, the $\Z_\ell$ and crystalline realizations of the primitive cohomology of $\mathscr{X}_{\mathfrak{P}}$ embed in those of $\mathrm{End}(\mathcal{A}_{\mathfrak{P}})$.   

Let $\mathfrak{P}$ be a place where both $X$ and $A$ have good reduction. For every $\beta\in L/rL$, let 
\[V_\beta(\mathcal{A}_{\overline{\mathfrak{P}}})\subset\mathrm{End}(\mathcal{A}_{\overline{\mathfrak{P}}}),\]
be the groups of  special endomorphisms of $\mathcal{A}_{\overline{\mathfrak{P}}}$, as defined later in \Cref{quasi-endomorphisms,s:self-dual}. 

\begin{proposition}
Let $r \geq 1$ and let $\mathfrak{P}$ be a prime of good reduction of residual characteristic coprime to $r$. Then there exists $\beta\in L/rL$ such that $V_\beta(\mathcal{A}_{\overline{\mathfrak{P}}})$ is different from zero if and only if there exists $\lambda\in \mathrm{Pic}(\mathscr{X}_{\overline{\mathfrak{P}}})$ that satisfies the conditions of \Cref{principal}.
\end{proposition}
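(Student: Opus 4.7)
My plan is to use the Kuga--Satake embedding $L_{\Z_\ell}\hookrightarrow \mathrm{End}(H^1_{\textrm{ét}}(\mathcal{A}_{\overline{\mathfrak{P}}}, \Z_\ell))$, together with the Tate conjecture for K3 surfaces in positive characteristic, to identify nonzero elements of $V_\beta(\mathcal{A}_{\overline{\mathfrak{P}}})$ with Picard classes on $\mathscr{X}_{\overline{\mathfrak{P}}}$ satisfying the conditions of \Cref{principal}. The group $V_\beta$ will be defined so that a special endomorphism $f\in \mathrm{End}(\mathcal{A}_{\overline{\mathfrak{P}}})$ lies in $V_\beta$ exactly when, for every prime $\ell$ dividing $r$, its $\ell$-adic realization viewed inside $L_{\Z_\ell}$ has residue class $\beta_\ell$ modulo $r$; with this setup the proposition amounts to translating between algebraic classes on $\mathscr{X}_{\overline{\mathfrak{P}}}$ and endomorphisms of $\mathcal{A}_{\overline{\mathfrak{P}}}$.

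For the direction \emph{from $V_\beta \neq 0$ to the existence of $\lambda$}, I would take a nonzero $f\in V_\beta$ and consider its $\ell$-adic realization, which is a Galois-invariant class in $L_{\Z_\ell}$. By Madapusi Pera's proof of the Tate conjecture for K3 surfaces in positive characteristic (\cite{madapusiperatate}, extended to characteristic $2$ in \cite{kimmadapusipera,ito-ito-koshikawa} via the crystalline realization), this class is the cycle class of some $\lambda\in \mathrm{Pic}(\mathscr{X}_{\overline{\mathfrak{P}}})_{\Q}$. The hypothesis that $r$ is coprime to the discriminant of $L$ ensures $L_{\Z_\ell}$ is self-dual at each $\ell\mid r$, so one can clear denominators without leaving $L_{\Z_\ell}$ or changing the residue modulo $r$; combining this with the $\ell$-independence of cycle classes at the remaining primes produces an integral $\lambda$ fulfilling \Cref{principal}.

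Conversely, given $\lambda$ as in \Cref{principal}, its cycle classes in $L_{\Z_\ell}$ are Galois-invariant, and via the Kuga--Satake embedding define a Galois-invariant element of $\mathrm{End}(T_\ell \mathcal{A}_{\overline{\mathfrak{P}}})$ for every $\ell\neq p$. By Tate's theorem (Tate--Faltings--Zarhin) for endomorphisms of abelian varieties over finite fields, these lift to $\Z_\ell$-coefficient endomorphisms, and the compatibility across $\ell$ together with the finite-dimensionality of $\mathrm{End}(\mathcal{A}_{\overline{\mathfrak{P}}})_{\Q}$ lets us patch them into a single $f\in \mathrm{End}(\mathcal{A}_{\overline{\mathfrak{P}}})$. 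This $f$ is then a special endomorphism whose realizations reduce to $\beta_\ell$ modulo $r$ by construction, so $f\in V_\beta$ is nonzero.

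The hard part will be the integrality and compatibility statements on both sides: in the forward direction, the Tate conjecture only produces an element of $\mathrm{Pic}\otimes\Q$, and one must clear denominators without losing the residue class $\beta_\ell$, which is where the assumption $\gcd(r,\mathrm{disc}(L))=1$ enters, through the self-duality of $L_{\Z_\ell}$ at primes $\ell\mid r$. In the reverse direction, patching $\ell$-adic endomorphisms into a genuine integral endomorphism is the most delicate step and will rely on the precise definition of special endomorphisms deferred to \Cref{quasi-endomorphisms,s:self-dual}.
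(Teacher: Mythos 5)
The paper's proof is a one-liner: it invokes, as part of the Kuga--Satake package for $\mathscr{X}_{\mathfrak{P}}$ (coming from \cite{madapusiperatate} and its refinements, recalled in the preamble to the proposition), an \emph{integral} inclusion
\[
V_\beta(\mathcal{A}_{\overline{\mathfrak{P}}})\hookrightarrow \mathrm{Pic}(\mathscr{X}_{\overline{\mathfrak{P}}})
\]
compatible with $\ell$-adic and crystalline realizations, and then simply notes that the image of any nonzero $f$ has the required residue $\beta_\ell$ by the very definition of $V_\beta$. You instead route through the \emph{rational} Tate conjecture, $\mathrm{Pic}(\mathscr{X}_{\overline{\mathfrak{P}}})\otimes\Q_\ell \xrightarrow{\sim} H^2_{\textrm{ét}}(\mathscr{X}_{\overline{\mathfrak{P}}},\Q_\ell(1))^{\mathrm{Gal}}$, and then try to repair integrality by hand. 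That choice is where the gaps open up.

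In the forward direction, the Tate conjecture as a $\Q_\ell$-statement hands you, for each $\ell$, some $\lambda_\ell\in\mathrm{Pic}(\mathscr{X}_{\overline{\mathfrak{P}}})$ and a scalar $c_\ell\in\Q_\ell$ with $f_\ell=c_\ell\cdot\mathrm{cl}(\lambda_\ell)$. Two things must then be shown: (a) the $\lambda_\ell$ can be chosen independently of $\ell$ and (b) the scalar $c_\ell$ can be taken to be a unit (so that clearing denominators does not destroy the residue class modulo $r$). Neither follows from self-duality of $L_{\Z_\ell}$ at $\ell\mid r$ alone; that hypothesis guarantees that $L_{\Z_\ell}/rL_{\Z_\ell}$ is a clean target for the residue map, but it says nothing about whether $f_\ell$ lies in the image of the \emph{integral} cycle class map, nor about $\ell$-independence of the algebraic class. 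Both are genuinely extra statements (in effect, the integral Tate conjecture and an $\ell$-independence of cycle classes for this family of classes). The paper sidesteps both issues entirely by working with the integral correspondence $V_\beta\hookrightarrow\mathrm{Pic}$, which already encodes the compatibility across all realizations.

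The converse direction (from $\lambda$ to a special endomorphism) is also shakier in your write-up than it needs to be: having a Galois-equivariant class in $\mathrm{End}(T_\ell\mathcal{A}_{\overline{\mathfrak{P}}})$ for every $\ell$ does not, by itself, produce a single $f\in\mathrm{End}(\mathcal{A}_{\overline{\mathfrak{P}}})$ --- Tate's theorem gives an isomorphism after $\otimes\Z_\ell$ for each $\ell$ separately, and ``finite-dimensionality of $\mathrm{End}(\mathcal{A}_{\overline{\mathfrak{P}}})_\Q$'' is not enough to glue these into a single integral endomorphism with the prescribed realizations. But this direction is actually the \emph{easy} one: given $\lambda\in\mathrm{Pic}(\mathscr{X}_{\overline{\mathfrak{P}}})$, the Kuga--Satake functoriality produces directly a special endomorphism with the required realizations, no patching needed. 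Note, finally, that the paper only uses (and only proves) the implication $V_\beta\neq 0 \Rightarrow \exists\lambda$; the converse is what motivates the ``iff'' but is not what drives \Cref{main-kuga-satake}, so you should focus your effort on getting the forward direction airtight, and that means using the integral inclusion rather than reconstructing it from the rational Tate conjecture.
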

\begin{proof}
Let $\widetilde{\beta}\in \frac{1}{r}L/L$ be a lift of the class $\alpha$ as given by \Cref{lifts} and let $\beta\in L/rL$ its image after multiplication by $r$. 

From the properties of the Kuga-Satake abelian variety, we have an inclusion: 
\[V_\beta(\mathcal{A}_{\overline{\mathfrak{P}}})\hookrightarrow \mathrm{Pic}(\mathscr{X}_{\overline{\mathfrak{P}}}).\]
Then for any non-zero endomorphism $f\in V_\beta(\mathcal{A}_{\overline{\mathfrak{P}}})$, the class $\lambda=f$ gives the desired result. Indeed, by definition of special endomorphisms, for any prime $\ell$, we have an $\ell$-adic realization $f\in L_{\Z_\ell}$ which by definition has residue equal to $\beta$ in $L/rL$, hence it satisfies \Cref{principal}. 
\end{proof}

We conclude that \Cref{principal} is implied by the following statement which we will prove in \Cref{section-proof}. 
\begin{theorem}\label{main-kuga-satake}
    For $\beta\in L/rL$ as in the proof above, there exists infinitely many primes $\mathfrak{P}$ coprime to $r$ such that $A$ has good reduction at $\fP$ and $V_\beta(\mathcal{A}_{\overline{\fP}})\neq \{0\}$.
\end{theorem}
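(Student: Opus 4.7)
The plan is to prove Theorem \ref{main-kuga-satake} by Arakelov-theoretic intersection theory on a GSpin Shimura variety, following the strategy initiated by Charles and developed in \cite{sstt, maulik-shankar-tang}. Let $\mathcal{M}$ denote the GSpin Shimura variety attached to $(L, Q)$ equipped with a level structure at $r$ refined enough to single out the class $\beta \in L/rL$. The Kuga-Satake construction gives an $\mathcal{O}_K[1/N]$-morphism $\mathscr{S} \to \mathcal{M}$, and for each positive integer $m$ we have a special divisor $Z(\beta, m) \subset \mathcal{M}$ whose points parametrize lattice vectors of norm $m$ in the coset $\beta + L$. The condition $V_\beta(\mathcal{A}_{\overline{\mathfrak{P}}}) \neq \{0\}$ is precisely that the image of $\mathfrak{P}$ under $\mathscr{S} \to \mathcal{M}$ meets some $Z(\beta, m)$, so the theorem amounts to showing that $\mathscr{S}$ meets $\bigcup_{m > 0} Z(\beta, m)$ at infinitely many distinct primes.

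Assuming for contradiction that the union of these intersections is supported at a finite set of primes $\mathfrak{P}_1, \ldots, \mathfrak{P}_s$, we will estimate the arithmetic intersection numbers $\mathscr{S} \cdot Z(\beta, m)$ as $m$ grows in three ways. The global estimate comes from a well-chosen Borcherds product whose divisor on $\mathcal{M}$ is a positive linear combination of the $Z(\beta, m')$ for $m' \leq m$ plus boundary terms; its height gives a polynomial lower bound on the total arithmetic degree in $m$. The archimedean contribution is bounded via Kudla-Millson Green functions integrated against the Chern form of the Hodge bundle along $\mathscr{S}(\C)$, and admits an upper bound of strictly smaller order in $m$. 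The non-archimedean contribution, under our contradiction hypothesis, is supported only at $\mathfrak{P}_1, \ldots, \mathfrak{P}_s$; the local multiplicity at each $\mathfrak{P}_i$ is controlled by the number of special endomorphisms of $\mathcal{A}_{\overline{\mathfrak{P}}_i}$ of appropriate norm, and the estimates of \cite{sstt, maulik-shankar-tang-K3, shankar-tang} give an upper bound also of strictly smaller order than the global lower bound. Comparing growth rates yields the contradiction. The rank hypothesis $\rk(L) \neq 2, 4$ is forced here: \cite{sstt} requires $\rk(L) \geq 5$, while rank $3$ is covered by \cite{frei-hassett-alvarado}.

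The principal technical obstacle, already flagged in the introduction, is that the integral models, special divisors and Borcherds products of \cite{howardmadapusi} are constructed only when the level structure is hyperspecial at every prime, whereas here the level at $r$ is not hyperspecial. My first task would therefore be to construct by hand an integral model of $\mathcal{M}$ over $\Z[1/N]$: away from $r$ it is obtained as a finite étale cover of the Howard-Madapusi model that remembers the $\beta$-component of the étale cohomology, together with the pullbacks of the Howard-Madapusi special divisors; over $r$ one works directly with the Kuga-Satake family and extends the divisors $Z(\beta, m)$ and the Hodge bundle ad hoc. Since we cannot appeal to the modularity of a generating series at this level of refinement, I would build only the specific Borcherds products actually needed, choosing vector-valued weakly holomorphic input forms whose pole orders and Fourier coefficients at $r$ are compatible with a clean spreading out to this model. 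Verifying the extension of \cite[Theorem A]{howardmadapusi} over $r$ for these specific products is the main new piece of arithmetic input, and the heart of the work. Once this infrastructure is in place, the local-global growth comparison sketched above proceeds essentially formally from \cite{charles-exceptional-isogenies, sstt, maulik-shankar-tang-K3, shankar-tang, maulik-shankar-tang}.
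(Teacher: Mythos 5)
Your proposal reproduces the paper's strategy essentially verbatim: interpret $V_\beta(\mathcal{A}_{\overline{\fP}})\neq\{0\}$ as the incidence of $\mathscr{S}$ with the special divisors $\mathcal{Z}(\beta,m)$ on a GSpin Shimura variety with level structure determined by $rL$, argue by contradiction via Arakelov height comparisons combining a global bound from a well-chosen Borcherds product, an archimedean lower bound from Green function asymptotics, and a non-archimedean upper bound, with the main technical work being the construction of an ad hoc integral model $\widetilde{\mathcal{M}}_r$ over the primes dividing $r$ (by normalization in $\mathcal{M}_r$), the extension of special divisors there, and the verification that the chosen Borcherds products have flat divisors over $\Z$ so that the Howard--Madapusi height identity persists. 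The minor imprecisions (the paper uses Bruinier's automorphic Green functions rather than Kudla--Millson ones, and the Borcherds divisor is $\widehat{\mathcal{Z}}^{tor}(\beta,m)$ plus a fixed finite set of correction terms with coefficients $O(c(\beta,m))$, not a positive combination over all $m'\le m$) do not affect the correctness of the outline.
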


\section{GSpin Shimura varieties: integral models and Arakelov intersection theory}
We introduce in this section GSpin Shimura varieties, their integral models and their toroidal compactifications. Our main references are \cite{agmp-annals,madapusiintegral,howardmadapusi,madapusitor} to which we refer for more details. 

\subsection{GSpin Shimura varieties over $\Q$}
Let $(L,Q)$ be a quadratic even lattice of signature $(n,2)$, $n\geq 1$ and denote the bilinear form associated to $(L,Q)$ by:
\[(x\cdot y)=Q(x+y)-Q(x)-Q(y),\, \forall x,y\in L~.\]

We can construct a Shimura datum associated to $(L,Q)$ as follows: let $G=\mathrm{GSpin}(L_\Q)$ be the reductive algebraic group over $\Q$ of spinor similitudes and consider the Hermitian symmetric domain 
\[\cD=\{\omega\in\mathbb{P}(L_\C), (\omega\cdot\omega)=0, (\omega\cdot\overline{\omega})<0\}.\]

Then $(G,\cD)$ is a Hodge type Shimura datum with reflex field equal to $\Q$. For any choice of a compact open subgroup $K\subset G(\mathbb{A}_f)$, we get a Shimura variety defined over $\Q$ whose set of complex points is \[M(\C)=G(\Q)\backslash \cD\times G(\mathbb{A}_f)/K,\] 
and whose canonical model $M$ is a smooth Deligne-Mumford stack over $\Q$.

The choice of the lattice $(L,Q)$ specifies a particular open subgroup of $G(\mathbb{A}_f)$ defined as $K=C(L\otimes \widehat{\Z})\cap G(\mathbb{A}_f)$, where $C(L\otimes \widehat{\Z})$ is the $\widehat{\Z}$-Clifford algebra of $(L\otimes \widehat{\Z},Q)$. The group $K$ is the largest compact-open subgroup of $G(\mathbb{A}_f)$ that stabilizes $L\otimes_\Z\widehat{\Z}$ and acts trivially on $L^{\vee}/L$ where $L^{\vee}$ is the dual lattice of $L$ defined as: 
\[L^\vee=\{x\in L_\Q|\,\forall y\in L, (x\cdot y)\in\Z\}.\]

The Shimura variety $M$ is of Hodge type and carries a family of Kuga-Satake abelian varieties $A\xrightarrow{\pi} M$ whose relative cohomology can be understood in terms of algebraic representations of $G$ as follows. By construction, $G$ has an algebraic action by left multiplication on $C(V)$ where $V=L\otimes_\Z \Q$, and $C(V)$ is the Clifford algebra of $(V,Q)$. There is also an action of $G$ on $V$ via an algebraic group morphism $G\rightarrow \mathrm{SO}(V)$. Letting $H=C(V)$, then we have an inclusion $V\hookrightarrow \mathrm{End}_\Q(H)$ given by left multiplication and it is in fact a $G$-equivariant map. This yields filtered vector bundles with integrable connection on $M$, denoted $(\mathbb{V}_{dR},F^{\bullet}\mathbb{V}_{dR})$ and  $(\mathbb{H}_{dR},F^{\bullet}\mathbb{H}_{dR})$ related by a morphism of flat filtered vector bundles \[\mathbb{V}_{dR}\hookrightarrow \mathbb{H}_{dR}~.\]
The vector bundle $\mathbb{V}_{dR}$ is endowed with a bilinear form \[(\,\cdot\,):\mathbb{V}_{dR}\times \mathbb{V}_{dR}\rightarrow \mathcal{O}_M,\]
 for which the line bundle $\omega=F^1\mathbb{V}_{dR}$ is isotropic and $F^0\mathbb{V}_{dR}=(F^1\mathbb{V}_{dR})^\bot$.
Moreover, we have a canonical isomorphism of filtered vector bundles: 
 \[ \mathbb{H}_{dR}\simeq \underline{\mathrm{Hom}}(R^1\pi_*\Omega^{\bullet}_{A/M},\mathcal{O}_M),\]
see \cite[\S 4.1]{agmp-annals} for more details. 
\bigskip 

The constructions above are functorial in the following way: for any inclusion $(L_1,Q)\subseteq (L_2,Q)$ of quadratic lattices, then the previous discussion produces Shimura varieties $M_1$ and $M_2$ over $\Q$ which admit Kuga-Satake abelian schemes $A_i\rightarrow M_i$ and filtered vector bundles with integrable connections $(\mathbb{V}^i_{dR},F^{\bullet}\mathbb{V}^i_{dR})$ and $(\mathbb{H}^i_{dR},F^{\bullet}\mathbb{H}^i_{dR})$, for $i=1,2$. We have a finite morphism $\eta:M_1\rightarrow M_2$, which is  \'etale if $L_1$ has finite index in $L_2$. We also have morphism of Kuga-Satake abelian schemes 
\begin{center}
\begin{tikzpicture}[scale=1]
\node (s) at (0,0) {$A_1$};
\node (s1) at (2,0) {$\eta^{*} A_2$} ;
\node (d) at (1,-1) {$M_1$};
\draw[->,>=latex] (s)--(s1);
\draw[->,>=latex] (s)--(d);
\draw[->,>=latex] (s1)--(d);
\end{tikzpicture}
\end{center}
which is an isogeny in the finite index case, of degree a power of $|L_2/L_1|$. Finally, we have canonical isomorphisms of filtered vector bundles with integrable connections: 
\[\eta^{*}\mathbb{V}^2_{dR}\simeq \mathbb{V}^1_{dR},\quad\textrm{and}\quad  \eta^{*}\mathbb{H}^2_{dR}\simeq \mathbb{H}^1_{dR}~.\]  
\begin{remarque}
Since $A_1$ and $A_2$ are defined using the Clifford algebras of two different lattices, they are different as abelian schemes. Even in the case where $L_1=rL_2$, the isogeny $A_1\rightarrow \eta^* A_1$, which is induced from the map of Clifford algebras $C(L_1)\rightarrow C(L_2)$, is not the multiplication by a power of $r$ but rather the multiplication by different powers of $r$ on each degree of the Clifford algebra.
\end{remarque}

\subsubsection{Integral models and their compactifications}
We recall in this section the construction of integral models of GSpin Shimura varieties  following \cite[\S 6]{howardmadapusi}, \cite[\S\S 4.2, 4.3]{agmp-annals}, and their toroidal compactifications following \cite{howardmadapusi,madapusitor}. 
\medskip 

Let $p$ be a prime number. The lattice $L$ is said to be {\it maximal} at $p$ if $L\otimes\Z_p$ is a maximal lattice of $L\otimes \Q_p$ over which the quadratic form is $\Z_p$-valued. In particular, if the lattice $L$ is self-dual at $p$, then $L$ is maximal at $p$. We say that $L$ is maximal if it is maximal at all primes. 

Let $\Omega$ be the finite set of primes $p\in \Z$ at which the lattice $L_{\Z_p}$ is not maximal. Then by \cite[\S 6]{howardmadapusi}, there is a normal and flat integral model $\mathcal{M}\rightarrow \Z[\Omega^{-1}]$ with generic fiber $M$, which is a Deligne-Mumford stack and which enjoys the following properties:  
\begin{enumerate}
    \item The Kuga-Satake abelian scheme extends to an abelian scheme $\mathcal{A}\rightarrow \mathcal{M}$. 
    \item The line bundle $\omega=F^1\mathbb{V}_{dR}$ extends to a line bundle  $\boldsymbol{\omega}$ on $\mathcal{M}$.
    \item $\mathcal{M}$ is smooth at a prime $p$ if the lattice $(L,Q)$ is almost self-dual and regular if $p$ is odd, and $p^2$ does not divide the discriminant of $L$.    
\end{enumerate}
To explain the last condition, we say that $L$ is almost self-dual at $p$ if either $p$ is odd and $L$ is self-dual at $p$ or $p=2$ and $v_2(|L^{\vee}/L|)\leq 1$, where $v_2$ is the $2$-adic valuation. 

\subsubsection{Special divisors}\label{quasi-endomorphisms}
By \cite[\S 4.5]{agmp-annals}, for every scheme $S\rightarrow\mathcal{M}$, there is a functorial subspace 
\[V(\mathcal{A}_S)\subset \mathrm{End}(\mathcal{A}_S)_\Q\]
of {\it special quasi-endomorphisms}, the construction of which will be recalled in \Cref{level-structure}. The space $V(\mathcal{A}_S)$ is endowed with a positive definite quadratic form $Q$ such that $x\circ x=Q(x)\cdot \mathrm{Id}_{\mathcal{A}_S}$ for $x\in V(\mathcal{A}_S)$. One in fact can define for every $\beta\in L^{\vee}/L$, a subset \[V_\beta(\mathcal{A})\subset V(\mathcal{A}_S)\]
of special quasi-endomorphisms whose different cohomological realizations are prescribed by $\beta$, see \cite[P. 447]{agmp-annals}. We have then the following result which is \cite[Proposition 4.5.8]{agmp-annals}.

\begin{proposition}
For every $\beta\in L^{\vee}/L$, $m\in Q(\beta)+\Z$, there is a finite, unramified and relatively representable $\mathcal{M}$-stack whose functor of points assigns to every scheme $\mathcal{S}\rightarrow \mathcal{M}$ the set 
\[\mathcal{Z}(\beta,m)(\mathcal{S})=\{x\in V_\beta(\mathcal{A}_S)|\, Q(x)=m\}\]
\end{proposition}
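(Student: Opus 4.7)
The plan is to construct $\mathcal{Z}(\beta,m)$ as a locally closed substack of the relative endomorphism scheme of the Kuga-Satake abelian scheme $\mathcal{A}\to\mathcal{M}$ and then deduce its geometric properties from standard facts about endomorphism schemes of abelian schemes, combined with the positive-definiteness of $Q$.

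First, I would invoke the fact that for any abelian scheme $\mathcal{A}\to S$, the functor $T\mapsto \mathrm{End}(\mathcal{A}_T)$ is represented by a separated, unramified, locally finitely presented $S$-scheme. Filtering by the degree of an associated isogeny (or by polarization height) decomposes this scheme into a countable disjoint union of finite unramified $S$-schemes. Tensoring with $\Q$ gives a similarly well-behaved relative object of quasi-endomorphisms $\underline{\mathrm{End}}(\mathcal{A}/\mathcal{M})_\Q$, into which $\mathcal{Z}(\beta,m)$ will embed.

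Second, inside $\underline{\mathrm{End}}(\mathcal{A}/\mathcal{M})_\Q$, I would cut out the subfunctor of special quasi-endomorphisms $V(\mathcal{A})$ by imposing two conditions: (i) the closed relation $x\circ x = Q(x)\cdot\mathrm{Id}$ for some rational scalar; and (ii) compatibility with the cohomological realizations, namely that the de Rham, $\ell$-adic, and crystalline classes of $x$ lie in the subobject $\mathbb{V}_{\bullet}\subset\mathrm{End}(\mathbb{H}_{\bullet})$ coming from the Hodge-type embedding. The refinement to $V_\beta$ is the locally closed condition that the residue class of $x$ in $L^{\vee}/L$, measured simultaneously through each realization, equals $\beta$; this makes sense because the integral lattices $L_{\Z_\ell}$, $L_{\mathrm{crys}}$, etc., are locally constant (in the appropriate topology) on $\mathcal{M}$ and their quotients by $L$ are étale sheaves. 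Finally, imposing $Q(x)=m$ is a closed condition, carving out $\mathcal{Z}(\beta,m)$.

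Third, for finiteness I would argue fiberwise: at a geometric point $s$, $V(\mathcal{A}_s)$ is a $\Z$-lattice inside a positive-definite $\Q$-quadratic space (this positivity is built into the Kuga-Satake construction through the polarization), so $\{x\in V_\beta(\mathcal{A}_s):Q(x)=m\}$ is finite. Together with unramifiedness, this upgrades the locally closed substack to a finite unramified one, since a Rosati/Mumford-type bound controls the isogeny type of $x$ in terms of $Q(x)=m$, ensuring properness on top of closedness.

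The main obstacle I expect lies in step two: rigorously verifying, in families and in particular at primes in $\Omega$ where the lattice is not self-dual, that the residue-class condition $x\equiv\beta\pmod L$ descends to a locally closed condition on $\underline{\mathrm{End}}(\mathcal{A}/\mathcal{M})_\Q$. This demands a uniform description of the integral $\ell$-adic and crystalline realizations and their interaction with the embedding $\mathbb{V}\hookrightarrow \mathrm{End}(\mathbb{H})$, which is exactly what the construction in \Cref{level-structure} is set up to provide; with that machinery in hand, each cohomological avatar of the condition is representable, and intersecting over all primes (only finitely many matter once $\beta$ and $m$ are fixed) yields the desired locally closed substack.
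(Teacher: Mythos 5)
The paper offers no proof of this proposition: it is recorded verbatim as \cite[Proposition 4.5.8]{agmp-annals}, so there is no in-text argument to compare against, only the strategy of that reference. Your sketch does track that strategy — realize $\mathcal{Z}(\beta,m)$ as a substack of the relatively representable scheme of quasi-endomorphisms of the Kuga--Satake family, impose the cohomological-realization conditions to cut out $V_\beta$, and conclude finiteness from positive-definiteness of $Q$ together with a denominator bound coming from the order of $\beta$. Three corrections are worth recording. First, the identity $x\circ x=Q(x)\cdot\mathrm{Id}$ is not an imposed condition defining $V(\mathcal{A})$; it is a \emph{consequence} of the Clifford relation once the realizations of $x$ are constrained to lie in the subobjects $\mathbb{V}_\bullet\subset\mathrm{End}(\mathbb{H}_\bullet)$, so listing it as constraint (i) is redundant rather than wrong. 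Second, the residue-class condition is actually open and closed on each component of the unramified Hom scheme, since the $\ell$-adic residue is locally constant; calling it ``locally closed'' is technically correct but understates how tame this step is. Third, and most substantively, properness deserves more than the gesture at a ``Rosati/Mumford-type bound.'' Finite means quasi-finite \emph{and} proper, and properness is where the real work sits: one must run the valuative criterion, noting that $rx$ (for $r$ the order of $\beta$ in $L^\vee/L$) is an honest endomorphism of the generic fiber of the abelian scheme over a dvr, that it extends by the N\'eron mapping property, and that the de Rham and crystalline realizations of the extension remain inside $\mathbb{V}_\bullet$. This last verification — especially the crystalline containment at maximal-but-not-self-dual primes — is the genuine content of your ``main obstacle,'' and is precisely what the cited proof supplies; your sketch correctly identifies where the difficulty lives but does not yet discharge it.
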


By \cite[Proposition 2.4.3]{howardmadapusi-2}, $\mathcal{Z}(\beta,m)$ is a generalized Cartier divisor in the sense of \cite[Definition 2.4.1]{howardmadapusi-2} and can also be seen as Cartier divisor on $\mathcal{M}$ by \cite[Remark 2.4.2]{howardmadapusi-2}. We will henceforth refer to it as \textit{special divisor}. 

We can give an explicit description of the set of complex points of the special divisors as follows: in $\mathcal{M}(\C)$, a point $s\in \mathcal{M}(\C)$ can be lifted to a pair \[(h,g)\in\cD\times G(\mathbb{A}_f),\] and the group of special quasi-endomorphisms of $\mathcal{A}_s$ is canonically identified with \[\{x\in L_\Q|\,(x\cdot h)=0\}~.\] 
Then the special divisors are given, for every $\beta\in L^{\vee}/L$ and $m$, by the following double quotient
\[\mathcal{Z}(\beta,m)(\C)=G(\Q)\backslash\left( \bigcup_{\underset{Q(\lambda)=m}{\lambda\in g.(\beta+\widehat{L})}}\{(h,g)\in \cD\times G(\mathbb{A}_f), (h\cdot\lambda)=0\}\right)/K~.\]

\subsection{GSpin Shimura varieties with level structure}\label{level-structure}
We fix a maximal quadratic lattice $(L,Q)$ for the rest of the paper and let  $r\geq 1$. Consider the inclusion of quadratic lattices 
\[(r L,Q)\subset (L,Q).\] 

The discussion from the previous section applies to both lattices $(L,Q)$ and $(rL,Q)$ yielding normal flat integral models \[\mathcal{M}\rightarrow \mathrm{Spec}(\Z)~,\quad \textrm{and}\quad \mathcal{M}_r\rightarrow \mathrm{Spec}(\Z[\Omega^{-1}])\] of $M$ and $M_r$. Here  $\Omega$ is the set of primes where $rL$ is not maximal, i.e., the prime divisors of $r$. We have thus an abelian scheme $\mathcal{A}_r\rightarrow \mathcal{M}_r$, and a Hodge line bundle $\boldsymbol{\omega}_r$.  We also have a finite \'etale map $\eta:M_r\rightarrow M$ which extends to a finite map over $\Z[\Omega^{-1}]$ by \cite[Proposition 6.6.1]{howardmadapusi} that we still denote by 
\[\eta:\mathcal{M}_r\rightarrow \mathcal{M}_{\Z[\Omega^{-1}]}~,\]
and such that $\eta^{*}\boldsymbol{\omega}\simeq \boldsymbol{\omega}_r$.

The Kuga-Satake abelian scheme $\mathcal{A}\rightarrow \mathcal{M}$ pulls back to an abelian scheme $\eta^{*}\mathcal{A}$ on $\mathcal{M}_r$ with an isogeny 
\begin{center}
\begin{tikzpicture}[scale=1]
\node (s) at (0,0) {$\mathcal{A}_r$};
\node (s1) at (2,0) {$\eta^{*} \mathcal A$} ;
\node (d) at (1,-1) {$\mathcal M_r$};
\draw[->,>=latex] (s)--(s1);
\draw[->,>=latex] (s)--(d);
\draw[->,>=latex] (s1)--(d);
\end{tikzpicture}
\end{center}
which extends the isogeny over the generic fibers. 
\medskip 

The following lemma is an easy consequence of the construction of the module of special quasi-endomorphisms, see also \cite[Proposition 6.6.2, 6.6.3]{howardmadapusi} which refers to \cite[Proposition 2.6.4]{agmp-compositio} for the proof. To simplify notations, we will drop the index $r$ in the notation of special divisors in $\mathcal{M}_r$, as it will be clear from their coset in which space they live. 
\begin{lemma}\label{cartier-equality}
For every $\beta\in L^{\vee}/L$, $m\in Q(\beta)+\Z$, we have an equality of Cartier divisors: 
\[\eta^*\mathcal{Z}(\beta,m)= \bigsqcup_{\underset{\overline{\gamma}=\beta}{\gamma \in L^{\vee}/rL}}\mathcal{Z}(\gamma,m)~.\]
\end{lemma}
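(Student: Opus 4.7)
\medskip
\noindent\emph{Proof plan.} My plan is to establish the identity by comparing functors of points on both sides and then promoting this to an equality of Cartier divisors via the construction of special divisors recalled above. Let $S\to \mathcal{M}_r$ be a test scheme. By the base change property of $\mathcal{Z}(\beta,m)$ over $\mathcal{M}$, an $S$-point of $\eta^{*}\mathcal{Z}(\beta,m)$ is a special quasi-endomorphism $x\in V_{\beta}(\eta^{*}\mathcal{A}_S)$ with $Q(x)=m$, while an $S$-point of $\mathcal{Z}(\gamma,m)$ (for $\gamma\in (rL)^{\vee}/rL$) is an element $x\in V_{\gamma}(\mathcal{A}_{r,S})$ with $Q(x)=m$. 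The isogeny $\mathcal{A}_r\to \eta^{*}\mathcal{A}$ induces a canonical isomorphism $V(\mathcal{A}_{r,S})_{\Q}= V(\eta^{*}\mathcal{A}_S)_{\Q}$ on rational special quasi-endomorphisms, so the question reduces to comparing the two integral structures and their decompositions into cosets.

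\medskip
The key compatibility, which is a consequence of the construction of $V(\mathcal{A}_S)$ through $\ell$-adic and de Rham realizations (essentially the content of \cite[Proposition 2.6.4]{agmp-compositio} to which \cite[Propositions 6.6.2, 6.6.3]{howardmadapusi} refer), is that under the above identification, $V(\mathcal{A}_{r,S})$ corresponds to quasi-endomorphisms whose $\ell$-adic components lie in $(rL)^{\vee}_{\Z_\ell}=\tfrac{1}{r}L^{\vee}_{\Z_\ell}$, while $V(\eta^{*}\mathcal{A}_S)$ corresponds to those whose components lie in the smaller lattice $L^{\vee}_{\Z_\ell}$. Fix $\beta\in L^\vee/L$ and use the coset decomposition
\[
\beta+\widehat{L}\;=\;\bigsqcup_{\substack{\gamma\in L^{\vee}/rL\\ \overline{\gamma}=\beta}}\bigl(\gamma+\widehat{rL}\bigr),
\]
which is the adelic version of the short exact sequence $0\to L/rL\to L^{\vee}/rL\to L^{\vee}/L\to 0$. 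Translating back through the realizations, this produces a disjoint decomposition
\[
V_{\beta}(\eta^{*}\mathcal{A}_S)\;=\;\bigsqcup_{\substack{\gamma\in L^{\vee}/rL\\ \overline{\gamma}=\beta}}V_{\gamma}(\mathcal{A}_{r,S}),
\]
where for each such $\gamma$ the cosets $\gamma+\widehat{rL}$ are contained in $L^{\vee}\otimes\widehat{\Z}$, so that the special divisors $\mathcal{Z}(\gamma,m)$ on $\mathcal{M}_r$ appearing in the sum are exactly those indexed by lifts of $\beta$ lying in the subgroup $L^{\vee}/rL\subset (rL)^{\vee}/rL$. Intersecting with the condition $Q(x)=m$ and comparing $S$-points yields the set-theoretic identity of the two sides.

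\medskip
Finally, to upgrade to an equality of Cartier divisors, I will invoke the description of $\mathcal{Z}(\beta,m)$ as a generalized Cartier divisor in the sense of \cite[Definition 2.4.1, Proposition 2.4.3]{howardmadapusi-2}: it is cut out, in a neighborhood of each point, by a canonical section of a line bundle built from $\boldsymbol{\omega}$ and the quasi-endomorphism $x$. Since $\eta$ is finite étale at the generic fiber and finite flat on integral models, and since $\eta^{*}\boldsymbol{\omega}\simeq \boldsymbol{\omega}_r$, the pulled-back defining equation for $\mathcal{Z}(\beta,m)$ factors, in the local ring at any point of $\mathcal{M}_r$, as the product of the defining equations of the $\mathcal{Z}(\gamma,m)$ indexed by the lifts of $\beta$. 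The main technical point to check is this local factorization together with the compatibility of the integral special quasi-endomorphism structures under the isogeny $\mathcal{A}_r\to\eta^{*}\mathcal{A}$; I expect this to be the only nontrivial step, and it follows from the functoriality of the Kuga--Satake construction recorded in \cite{agmp-compositio,howardmadapusi}.
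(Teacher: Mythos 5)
Your proposal is essentially correct and follows the same route the paper implicitly takes when it refers to \cite[Propositions 6.6.2, 6.6.3]{howardmadapusi} and \cite[Proposition 2.6.4]{agmp-compositio}: identify $S$-points as special quasi-endomorphisms, use the coset decomposition coming from $0\to L/rL\to L^{\vee}/rL\to L^{\vee}/L\to 0$, and then observe that the local defining equations match because they are governed by the same deformation theory. In particular the key identity
\[
\beta+\widehat{L}\;=\;\bigsqcup_{\substack{\gamma\in L^{\vee}/rL\\ \overline{\gamma}=\beta}}\bigl(\gamma+\widehat{rL}\bigr)
\]
is exactly what makes the fibers of the surjection $L^{\vee}/rL\twoheadrightarrow L^{\vee}/L$ index the summands on the right, and you correctly note that these lifts $\gamma$ lie in the subgroup $L^{\vee}/rL\subset (rL)^{\vee}/rL$.

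Two small caveats. First, your appeal to "$\eta$ finite flat on integral models" is not something the paper asserts (\cite[Proposition 6.6.1]{howardmadapusi} gives only finiteness of $\eta$ over $\Z[\Omega^{-1}]$), and neither it nor the isomorphism $\eta^{*}\boldsymbol{\omega}\simeq\boldsymbol{\omega}_r$ is what actually makes the local factorization go: the defining equation of a local branch of $\mathcal{Z}(\beta,m)$ at a point is read off from the deformation theory of the special quasi-endomorphism, and this is insensitive to the level structure, so the equation literally pulls back along $\eta$ to the equation of the corresponding branch of $\mathcal{Z}(\gamma,m)$. Second, your phrasing that "$V(\mathcal{A}_{r,S})$ corresponds to $(rL)^{\vee}_{\Z_\ell}$" and "$V(\eta^{*}\mathcal{A}_S)$ to $L^{\vee}_{\Z_\ell}$" is only correct if $V$ means the union over all cosets; the decomposition you actually need and correctly use is of a single coset $\beta_\ell+L_{\Z_\ell}$ into cosets of $(rL)_{\Z_\ell}$. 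Neither point affects the validity of the argument, which is sound.
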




\begin{definition}
    Let $\widetilde{\mathcal{M}}_r$ be the normalization of $\mathcal{M}$ in $\mathcal{M}_{r}$. This a normal flat integral model over $\Z$ of $M_r$ extending $\mathcal{M}_r\rightarrow \mathrm{Spec}(\Z[\Omega^{-1}])$. 
\end{definition}

It follows from the definition that we have the following commutative diagram: 
\begin{align*}
\xymatrix{\mathrm{Spec}(\Z)\ar@{=}[d]&&\widetilde{\mathcal{M}}_r \ar[ll]\ar[rr]\ar[d]^{\eta} && \mathcal{M}_r \ar[d]^{\eta}\\ 
\mathrm{Spec}(\Z)&&\mathcal{M} \ar[ll]\ar[rr] && \mathcal{M}_{\Z[\Omega^{-1}]}~. 
}
\end{align*}

The Kuga-Satake abelian scheme $\mathcal{A}\rightarrow \mathcal{M}$ pulls back to an abelian scheme $\eta^{*}\mathcal{A}$ on $\widetilde{\mathcal{M}}_r$, and the line bundle $\boldsymbol{\omega}$ pulls-back to a line bundle $\eta^*\boldsymbol{\omega}$ on $\widetilde{\mathcal{M}}_r$ which extends $\boldsymbol{\omega}_r$. By abuse of notations, we still denote $\boldsymbol{\omega}_r$ this extension.

Our goal in the next section is to extend the Cartier divisors $\mathcal{Z}(\beta,m)\rightarrow \mathcal{M}_r$ to $\widetilde{\mathcal{M}}_r$ such that the extension has good moduli interpretation and \Cref{cartier-equality} still holds. We will work at each prime in $\Omega$ then glue the constructions.
\subsubsection{Almost self-dual case}\label{s:self-dual}
Let $p$ be a prime number dividing $r$, hence $p\in\Omega$. We make the additional assumption that the lattice $L$ is {\bf almost self-dual at $p$} as this will be satisfied in our applications. Then the level $K_P$ at $p$ is hyperspecial and the Shimura variety $\mathcal{M}_{(p)}$ is the smooth canonical model over $\Z_{(p)}$ constructed in \cite{kisin, madapusiintegral,kimmadapusipera}. Let $\pi:\mathcal{A}\rightarrow \mathcal{M}$ be the Kuga-Satake abelian scheme. For $\ell\neq p$, we have an inclusion of \'etale sheaf of $\Z_\ell$-modules 
\[\V_{\ell}\subset \mathrm{End}_{\Z_\ell}(\mathbb{H}_\ell)\]
where \[\mathbb{H}_\ell=H^{1}_{\textrm{\'et}}(\mathcal{A}/\mathcal{M}_{(p)},\Z_\ell)~.\]

We also have an inclusion of filtered vector bundles with integrable connections:
\[\mathbb{V}_{dR}\subset \mathrm{End}(\mathbb{H}_{dR})~,\]
where \[\mathbb{H}_{dR}=H^1_{dR}({\mathcal{A}}/\mathcal{M}_{(p)})~,\]
and a crystal of modules over the formal completion of $\mathcal{M}_{(p)}$ along the special fiber: 
\[\mathbb{V}_{\crys}\subset \mathrm{End}(\mathbb{H}_{\crys}),\]
where $\mathbb{H}_{\crys}=R^1\pi_*\mathcal{O}^{\crys}_{\mathcal{A}_{2,\F_p}}$. Moreover, the formal completion of the de Rham vector bundle $\mathbb{V}_{dR}$ with its integrable connection is isomorphic to $\mathbb{V}_{\crys}$. 
\medskip 

We recall now the construction of special divisors in $\mathcal{M}_{(p)}$. For any scheme $S$ over $\Z_{(p)}$, the module of special quasi-endomorphisms $V(\mathcal{A}_{S})_{\Z_{(p)}}$ is by definition the set of quasi-endomorphisms $x\in \mathrm{End}_{\Z_{(p)}}(\mathcal{A}_{S})$ such that: 
\begin{itemize}
    \item the de Rham realization $x_{dR}$ lies in $\V_{dR|_S}$ and
    \item the $\ell$-adic realization $x_\ell$ lies in $\V_{\ell|_S}\otimes\Q_\ell$ and
    \item the $p$-adic realization $x_p$ over the generic fiber $S_\Q$ lies in $\V_{p|_{S_\Q}}\otimes \Q_p,$ and 
    \item its crystalline realization $x_{\crys}$ lies in $\V_{crys|_{S_{\F_p}}}$.
\end{itemize}

Let $\beta\in L^{\vee}/L$ and let $\beta_\ell\in L^{\vee}/L\otimes \Z_\ell$ be its $\ell$-adic component for every prime $\ell$. For $\ell\neq p$, the local system $\V_{\Z_\ell}^{\vee}/\V_{\Z_{\ell}}$ is trivial on $\mathcal{M}_{(p)}$  and isomorphic to $\underline{L^{\vee}/L\otimes \Z_\ell}$. Thus we have a well defined subsheaf 
\[\beta_\ell+\V_{\ell}\subseteq \V_\ell^{\vee}~.\] 
We define: 
\[V_{\beta}(\mathcal{A}_{S})=\{x\in V(\mathcal{A}_{S})_{\Z_{(p)}},\, \forall\, \ell\neq p,\, x_\ell\in\beta_\ell+\V_{\ell|_S}, x_p\in\beta_p+\mathbb V_{p|_{S_\Q}} \}~.\]
Via the morphism \[\eta:\widetilde{\mathcal{M}}_{r,(p)}\rightarrow \mathcal{M}_{(p)}~,\] all the  above data pulls-back to $\widetilde{\mathcal{M}}_{r,(p)}$: we have hence $\ell$-adic sheaves $\eta^*\V_{\ell}$, a de Rham vector bundle $\eta^{*}\V_{dR}$ and a crystal $\eta^*\V_{\crys}$.

\medskip 

For any $\Z_{(p)}$-scheme $S\rightarrow \widetilde{\mathcal{M}}_{r,(p)}$, we define the group of special quasi-endomorphisms:
\[ V(\eta^{*}\mathcal{A}_{S})_{\Z_{(p)}}\subset \mathrm{End}(\eta^{*}\mathcal{A}_{S})_{\Z_{(p)}}\]
as the quasi-endomorphisms $f\in \mathrm{End}(\eta^{*}\mathcal{A}_{S})_{\Z_{(p)}}$ whose \'etale, de Rham and crystalline realizations lies in the subsheaves $\eta^{*}\mathbb{V}_{\ell|_S}\otimes\Q_\ell$, $\eta^{*}\mathbb{V}_{dR|_S}$, $\eta^{*}\mathbb{V}_{crys|_S}$. This is simply the pull-back of $V(\mathcal{A}_{S})_{\Z_{(p)}}$.
\medskip

For $\ell\neq p$, notice that the \'etale local system $\frac{1}{r}\eta^*\mathbb{V}_{\ell}^{\vee}/r\cdot\eta^*\mathbb{\V}_\ell$ is trivial on $\widetilde{\mathcal{M}}_{r,(p)}$ and isomorphic to $\underline{\frac{1}{r}L^{\vee}/rL \otimes \Z_\ell}$. Hence, given $\beta\in L/rL$ and $\beta_\ell$ its $\ell$-adic component, we have a well defined subsheaf 
\[\beta_\ell+{r}\cdot\eta^*\mathbb{V}_{\ell}~.\] 
We define then 
\[V_{\beta_\ell}(\eta^{*}\mathcal{A}_{S})=\{x\in V(\eta^{*}\mathcal{A}_{S})_{\Z_{(p)}} | \, x_\ell\in\beta_\ell+r\cdot\eta^*\V_\ell\},\]

and \[V_{\beta_p}(\eta^{*}\mathcal{A}_{S})=\{x\in V(\eta^{*}\mathcal{A}_{S})_{\Z_{(p)}} | \,x_p\in\beta_p+r\cdot\eta^*\V_{p,S_\Q},\, \textrm{and}\, x_{\mathrm{\crys}}\in \eta^*\V_{crys,S_{\F_p}}\}.\]

Finally, we define
\[V_{\beta}(\eta^{*}\mathcal{A}_{S})=\cap_{\ell}V_{\beta_\ell}(\eta^{*}\mathcal{A}_{S})\cap V_{\beta_p}(\eta^{*}\mathcal{A}_{S})~.\]

We define now a functor on $\Z_{(p)}$-schemes as follows: 
\[S\mapsto \mathcal{Z}(\beta,m)(S)=\{f\in V_\beta(\eta^{*}\mathcal{A}_{S}), f\circ f=m\cdot\mathrm{Id}_{\eta^{*}\mathcal{A}_{S}}\}~.\]

\begin{proposition}
Let $\beta\in L/rL$. Then the above functor is representable by a finite unramified $\widetilde{\mathcal{M}}_{r,(p)}$-stack which coincides over $\Q$ with $\mathcal{Z}(\beta,m)_\Q$.  
\end{proposition}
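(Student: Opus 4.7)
The plan is to realize $\mathcal{Z}(\beta,m)$ as an open and closed substack of the pullback $\eta^{*}\mathcal{Z}^{\mathcal{M}}(\bar\beta,m)$ of the special divisor on $\mathcal{M}_{(p)}$ along the finite normalization map $\eta:\widetilde{\mathcal{M}}_{r,(p)}\to \mathcal{M}_{(p)}$, where $\bar\beta\in L^{\vee}/L$ denotes the image of $\beta\in L/rL$ under $L/rL\to L^{\vee}/L$. By the cited result of Andreatta--Goren--Madapusi--Pera (Proposition 4.5.8 of \cite{agmp-annals}), $\mathcal{Z}^{\mathcal{M}}(\bar\beta,m)\to \mathcal{M}_{(p)}$ is finite unramified, and this property is preserved by the pullback along $\eta$. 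Over this pullback there is a universal special quasi-endomorphism $f^{\mathrm{univ}}$ of $\eta^{*}\mathcal{A}$ with $f^{\mathrm{univ}}\circ f^{\mathrm{univ}} = m\cdot\mathrm{Id}$, and it suffices to show that the locus on which the residues of $f^{\mathrm{univ}}$ coincide with those of $\beta$ is open and closed.

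The key point is that for every prime $\ell\mid r$ with $\ell\neq p$, the sheaf $\tfrac{1}{r}\eta^{*}\mathbb{V}_{\ell}^{\vee}/r\eta^{*}\mathbb{V}_{\ell}$ is constant on $\widetilde{\mathcal{M}}_{r,(p)}$ (this is exactly the trivialization enforced by the level-$rL$ structure, together with the normalization construction of $\widetilde{\mathcal{M}}_{r,(p)}$), and canonically isomorphic to the constant sheaf $\underline{\tfrac{1}{r}L^{\vee}/rL\otimes\Z_\ell}$. Consequently, the residue class $f^{\mathrm{univ}}_{\ell}\bmod r\eta^{*}\mathbb{V}_{\ell}$ defines a section of a constant sheaf on the finite unramified stack $\eta^{*}\mathcal{Z}^{\mathcal{M}}(\bar\beta,m)$, hence is locally constant; the locus where it equals $\beta_\ell$ is therefore open and closed. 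Since $\beta_\ell=0$ for $\ell\nmid r$, only finitely many such conditions appear, and their intersection cuts out a finite unramified substack. For $\ell=p$ (when $p\mid r$), the same argument trivializes $\tfrac{1}{r}\eta^{*}\mathbb{V}_{p}^{\vee}/r\eta^{*}\mathbb{V}_{p}$ on the generic fiber and defines an open--closed substack of $(\eta^{*}\mathcal{Z}^{\mathcal{M}}(\bar\beta,m))_{\Q}$; we then take its scheme-theoretic closure in the finite unramified stack $\eta^{*}\mathcal{Z}^{\mathcal{M}}(\bar\beta,m)$, which is an open--closed substack because a finite unramified stack over the normal base $\widetilde{\mathcal{M}}_{r,(p)}$ is a disjoint union of its irreducible components. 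Agreement with $\mathcal{Z}(\beta,m)_{\Q}$ over $\Q$ is then immediate from \Cref{cartier-equality} applied to $\eta^{*}\mathcal{Z}(\bar\beta,m)_{\Q}$, where the summand indexed by $\gamma=\beta$ is precisely the one we have isolated.

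The main obstacle I expect is the compatibility at the prime $p\mid r$: the $p$-adic residue condition is only defined on the generic fiber, while the crystalline condition replaces it on the special fiber, and one must verify that these two conditions glue into a single open--closed substack of the integral model. This is where normality of $\widetilde{\mathcal{M}}_{r,(p)}$ enters crucially, together with the fact that the crystalline integrality condition is automatic once $f^{\mathrm{univ}}$ is taken in the pullback $V(\eta^{*}\mathcal{A}_S)_{\Z_{(p)}}$ of the integral module of special quasi-endomorphisms from $\mathcal{M}_{(p)}$. A secondary subtlety, which is not really an obstacle but has to be checked, is that the decomposition of $\eta^{*}\mathcal{Z}^{\mathcal{M}}(\bar\beta,m)$ into clopen pieces indexed by lifts $\gamma\in\tfrac{1}{r}L^{\vee}/rL$ of $\bar\beta$ produces exactly the special divisors $\mathcal{Z}(\gamma,m)$ of \Cref{cartier-equality} and not some coarser decomposition, which amounts to verifying that distinct classes of $\gamma$ really separate points of the stack.
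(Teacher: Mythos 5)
Your overall strategy — realizing $\mathcal{Z}(\beta,m)$ inside the pullback $\eta^{*}\mathcal{Z}(m)$ (note that $\bar\beta=0$ in $L^\vee/L$ since $\beta\in L$, so $\mathcal{Z}^{\mathcal{M}}(\bar\beta,m)$ is just $\mathcal{Z}(m)$), and cutting out the relevant locus by the residue conditions using constancy of $\tfrac{1}{r}\eta^{*}\mathbb{V}_\ell^{\vee}/r\eta^{*}\mathbb{V}_\ell$ — is exactly the natural route and is in the spirit of the cited \cite[Proposition 2.7.2]{agmp-compositio}, which is the entirety of the paper's own ``proof.'' The $\ell\neq p$ part of your argument is fine.

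There is a genuine gap at $\ell=p$. The closure step rests on the claim that a finite unramified stack over a normal base is a disjoint union of its irreducible components. That claim is false: any closed immersion is finite and unramified, and a closed subscheme of a normal (even regular) scheme can perfectly well have intersecting irreducible components (e.g.\ $V(t(t-p))\subset\Spec\Z_{(p)}[t]$). So the scheme-theoretic closure of the generic-fiber clopen piece $A$ is a priori only closed, not clopen, and the components of $Z'$ through a special-fiber point may carry different $p$-adic residues generically. There is a second, independent worry that deserves mention: even granting the disjoint-union claim, the closure $\overline{A}$ would not obviously represent the paper's functor, since $V_{\beta_p}$ imposes no condition on $\F_p$-schemes (the $p$-adic clause is only over $S_\Q$ and the crystalline clause is already part of $V(\eta^{*}\mathcal{A}_S)_{\Z_{(p)}}$), so over $\F_p$ the functor sees all of $Z'$, not just $\overline{A}_{\F_p}$. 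Resolving both points requires a more refined argument — e.g.\ using that étale locally the finite unramified stack is a disjoint union of closed subschemes each carrying a \emph{single} universal endomorphism, so one must show the $p$-adic residue of that endomorphism is actually constant on each such piece (a statement about connectedness of the generic fiber of a local deformation space, or a crystalline comparison propagating the residue across the special fiber), rather than a general topological fact about finite unramified morphisms. This is exactly the content that the citation to \cite{agmp-compositio} supplies, and it cannot be replaced by the clopen-closure assertion as stated.
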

\begin{proof}
We give an argument inspired by Proposition 2.7.1 in \cite{agmp-compositio}. From the moduli interpretation, we notice that we have an isomorphism of $\widetilde{\mathcal{M}}_{r,(p)}$-stacks:
 \[\eta^{*}\mathcal{Z}(m)=\bigsqcup_{\beta\in L/rL}\mathcal{Z}(\beta,m)\]
which shows that each $\mathcal{Z}(\beta,m)$ can be viewed as an open and closed substack of  $\eta^{*}\mathcal{Z}(m)$. Since $\mathcal{Z}(m)$ is representable by a finite unramified ${\mathcal{M}}_{(p)}$-stack, we conclude that the same is true for $\mathcal{Z}(\beta,m)$ over $\widetilde{\mathcal{M}}_{r,(p)}$. 

One can also give an alternative proof directly by following the proof explained in \cite[Proposition 2.7.2]{agmp-compositio} as we already have an abelian scheme $\mathcal{A}\rightarrow \widetilde{\mathcal{M}}_{r,(p)}$ and so the maximality assumption used there is not needed.
\end{proof}

In particular, it results from the previous proposition that the  divisors $\mathcal{Z}(\beta,m)$ glues as a finite unramified $\widetilde{\mathcal{M}}_r$-stack over $\Z$ and \'etale locally it is a Cartier divisor on $\widetilde{\mathcal{M}}_r$. Moreover, we have an equality of Cartier divisors:  
\begin{align}\label{pull-special-divisors}
    \eta^{*}\mathcal{Z}(m)=\bigsqcup_{\beta\in L/rL}\mathcal{Z}(\beta,m),
\end{align}
valid over $\Z$, and which extends \Cref{cartier-equality}.

\begin{proposition}\label{fltaness-special-cycles}
Let $\beta\in L/rL$, $m\in Q(\beta)+r\Z$. Then the Cartier divisor $\mathcal{Z}(\beta,m)\rightarrow \widetilde{\mathcal{M}}_r$ is flat over $\Z_{(p)}$. 
\end{proposition}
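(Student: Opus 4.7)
The plan is to deduce the flatness of $\mathcal{Z}(\beta,m)$ over $\Z_{(p)}$ from the known flatness of special divisors on the base $\mathcal{M}$, by pulling back through the finite morphism $\eta : \widetilde{\mathcal{M}}_r \to \mathcal{M}$.

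Since $\widetilde{\mathcal{M}}_r$ is normal and flat over $\Z$ and $\mathcal{Z}(\beta,m)$ is \'etale-locally an effective Cartier divisor, flatness of $\mathcal{Z}(\beta,m)$ over $\Z_{(p)}$ is equivalent to the assertion that no irreducible component of the special fiber $\widetilde{\mathcal{M}}_{r,\F_p}$ is contained in the support of $\mathcal{Z}(\beta,m)$. I would argue by contradiction: suppose such a component $Y$ exists.

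Under the almost self-duality of $L$ at $p$, the base $\mathcal{M}_{(p)}$ is smooth over $\Z_{(p)}$ of relative dimension $n$, and $\widetilde{\mathcal{M}}_{r,(p)}$ is finite over $\mathcal{M}_{(p)}$. Combined with the flatness of $\widetilde{\mathcal{M}}_r$ over $\Z$ (built into its construction as a normalization), both $\mathcal{M}_{\F_p}$ and $\widetilde{\mathcal{M}}_{r,\F_p}$ are equidimensional of dimension $n$. Since $\eta$ is finite, $\eta(Y)$ is a closed irreducible subset of $\mathcal{M}_{\F_p}$ of dimension $n$, hence an entire irreducible component. The pullback formula \eqref{pull-special-divisors} gives $\mathcal{Z}(\beta,m) \subset \eta^{*}\mathcal{Z}(m)$, so $\eta(Y) \subset \mathcal{Z}(m) = \mathcal{Z}(0,m)$ as subsets of $\mathcal{M}_{\F_p}$. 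This exhibits an entire component of $\mathcal{M}_{\F_p}$ inside the special divisor $\mathcal{Z}(0,m) \to \mathcal{M}$, contradicting the known flatness of $\mathcal{Z}(0,m)$ over $\Z_{(p)}$ in the almost self-dual case, cf.\ \cite[Proposition 6.6.2]{howardmadapusi} and \cite[Proposition 2.6.4]{agmp-compositio}.

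The main obstacle is justifying the dimension comparison and the correct interpretation of the pullback formula. The dimension statement relies crucially on the almost self-duality hypothesis (ensuring $\mathcal{M}_{(p)}$ is smooth) and on the fact that the special fiber of $\widetilde{\mathcal{M}}_r$ inherits equidimensionality from the generic fiber via $\Z$-flatness. The consistency of the index, namely that $\mathcal{Z}(\beta,m)$ for $\beta \in L/rL$ lies in $\eta^{*}\mathcal{Z}(0,m)$ rather than some other coset, follows directly from the definitions: any $x \in V_\beta(\eta^{*}\mathcal{A}_S)$ has $\ell$-adic realization in $\beta_\ell + r\eta^{*}\mathbb{V}_\ell \subset \eta^{*}\mathbb{V}_\ell$ and analogously at $p$, so on the base $\mathcal{M}$ it lies in $V_0(\mathcal{A}_{\eta(S)})$, trivializing the $L^{\vee}/L$-coset.
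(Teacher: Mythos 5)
Your argument is correct and follows essentially the same route as the paper: pull back the known flatness of $\mathcal{Z}(m)$ on $\mathcal{M}$ through the finite map $\eta$ to rule out vertical components of $\mathcal{Z}(\beta,m)$, then upgrade "no vertical components" to flatness using normality of $\widetilde{\mathcal{M}}_r$ (this last step is precisely the content of Lemma~\ref{normality-flatness}, which you invoke implicitly in your opening equivalence). The only slippage is bibliographic rather than mathematical: the flatness of $\mathcal{Z}(m)$ over $\Z_{(p)}$ in the almost self-dual case is what the paper attributes to the argument of \cite[Prop.~5.21]{madapusiintegral}, not to \cite[Prop.~6.6.2]{howardmadapusi} or \cite[Prop.~2.6.4]{agmp-compositio}, which concern the construction and pullback of special cycles rather than their flatness.
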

\begin{proof}
We have the relation \[\eta^{*}\mathcal{Z}(m)=\bigsqcup_{\gamma\in L/rL}\mathcal{Z}(\gamma,m)~,\] $\mathcal{Z}(m)$ is flat over $\Z_{(p)}$ by the same argument as in \cite[Prop 5.21]{madapusiintegral}, hence has no vertical components. Since $\eta$ is a finite map, we conclude that none of the $\mathcal{Z}(\gamma,m)$ has vertical components and by the lemma below applied to the complete local ring at a point, they are flat over $\Z_{(p)}$. 
\end{proof}
\begin{lemma}\label{normality-flatness}
Let $R$ be a normal, local, flat $\Z_{(p)}$-algebra and let $a$ be a non-zero divisor. Then all the associated primes of $a$ have height $1$. In particular, if $\mathrm{div}(a)\subset\mathrm{Spec}(R)$ has no vertical components of $\mathrm{Spec}(R\otimes\F_p)$, then $\mathrm{div}(a)$ is flat over $\Z_{(p)}$.
\end{lemma}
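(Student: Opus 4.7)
The first assertion is the classical statement that in a normal Noetherian ring, every principal ideal generated by a non-zero-divisor is unmixed of height one. I would deduce it from Serre's criterion: normality implies the conditions $(R_1)$ and $(S_2)$, and only $(S_2)$ is needed here. First, by Krull's Hauptidealsatz applied to the non-zero-divisor $a$, every \emph{minimal} prime over $(a)$ has height exactly one, so it suffices to rule out embedded primes.

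Suppose for contradiction that some $\mathfrak{q}\in\mathrm{Ass}(R/aR)$ has height at least two. Localizing at $\mathfrak{q}$, condition $(S_2)$ gives $\mathrm{depth}(R_{\mathfrak{q}})\geq 2$; since $a$ is a non-zero-divisor on $R_{\mathfrak{q}}$, it can be taken as the first element of a regular sequence, whence $\mathrm{depth}(R_{\mathfrak{q}}/aR_{\mathfrak{q}})\geq 1$. But $\mathfrak{q}\in\mathrm{Ass}(R/aR)$ means $\mathrm{depth}(R_{\mathfrak{q}}/aR_{\mathfrak{q}})=0$, a contradiction. Hence all associated primes of $(a)$ have height exactly one.

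For the second assertion, recall that flatness over the discrete valuation ring $\Z_{(p)}$ is equivalent to $p$-torsion-freeness. If $p$ were a zero-divisor on $R/aR$, then $p$ would lie in some associated prime $\mathfrak{q}$ of $(a)$. By the first part, $\mathfrak{q}$ has height one, so $V(\mathfrak{q})\subset\mathrm{Spec}(R)$ is an irreducible component of $\mathrm{div}(a)$. Since $p\in\mathfrak{q}$, this component lies in $V(p)=\mathrm{Spec}(R\otimes\F_p)$, i.e.\ it is a vertical component of $\mathrm{div}(a)$, contradicting the hypothesis. Therefore $p$ is a non-zero-divisor on $R/aR$ and $\mathrm{div}(a)$ is flat over $\Z_{(p)}$.

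\textbf{Main obstacle.} There is no serious obstacle: the argument is a routine application of Serre's criterion plus the observation that over a DVR flatness equals torsion-freeness. The only point one must be careful about is the implication ``normal $\Rightarrow (S_2)$'' for a local Noetherian ring, which is standard but relies on $R$ being Noetherian — a hypothesis that is implicit in the setup since the lemma is applied to complete local rings of a Deligne--Mumford stack of finite type over $\Z$.
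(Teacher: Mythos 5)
Your proof is correct. The first assertion is handled exactly as in the paper: Serre's normality criterion gives $(S_2)$, so any prime $\mathfrak q$ of height $\geq 2$ has $\mathrm{depth}(R_{\mathfrak q})\geq 2$, and since $a$ is a non-zero-divisor, $\mathrm{depth}(R_{\mathfrak q}/aR_{\mathfrak q})\geq 1$, so $\mathfrak q$ cannot be associated to $R/aR$. For the second assertion you take a slightly different, and arguably cleaner, route than the paper: you observe that a $p$-torsion element would place $p$ in an associated prime of $(a)$, which by the first part has height one, hence is a minimal prime of $(a)$ lying in $V(p)$ --- i.e.\ a vertical component, contradicting the hypothesis. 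The paper instead applies the height-one unmixedness to $(p)$ rather than to $(a)$ (note $p$ is a non-zero-divisor by flatness over $\Z_{(p)}$), deduces that $a$ is a non-zero-divisor on $R/pR$, and then invokes the permutability of regular sequences in a Noetherian local ring to conclude that $p$ is a non-zero-divisor on $R/aR$. Your version makes more direct use of the first part of the lemma as stated and avoids the regular-sequence permutation step. You are also right to flag the implicit Noetherian hypothesis: Serre's criterion as used here requires it, and it is satisfied in the paper's application to complete local rings of a Deligne--Mumford stack of finite type over $\Z$.
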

\begin{proof}
    This lemma is similar to \cite[Lemma 7.2.4]{howardmadapusi} when $R$ is Cohen-Macaulay but since we only assume normality, we give a detailed proof. By Serre's normality criterion, for every ideal $\mathfrak{P}$ of height $\geq 2$, $R_\mathfrak{P}$ has depth at least $2$ and hence $\mathfrak{P}$ cannot be associated to $a$, as otherwise the depth of $R_{\mathfrak{P}}/aR_\mathfrak{P}$ would be $0$, which is not possible as 
    
    \[\mathrm{depth}(R_\mathfrak{P}/aR_\mathfrak{P})=\mathrm{depth}(R_\mathfrak{P})-1\geq 1~,\] by \cite[Lemma 10.72.7.]{stacks-project}.
    For the second part, to prove that $\mathrm{div}(a)$ is flat, it is enough to prove that it has no $p$-torsion. By assumption, $a$ is not contained in any minimal prime over $p$, which are the same as the associated primes by the above. Hence $a$ is not a zero divisor in $R/pR$, which is equivalent to $p$ not being a zero divisor in $R/aR$, since $R$ is local and normal.
\end{proof}

\subsection{Arithmetic Chow groups}
We introduce in this section Arakelov Chow groups following \cite{gilletsoule} and \cite{burgos}. 
For more details on this section, we also refer to \cite[\S 3.1]{sstt} and \cite[\S 3]{tayouboundary}.

Let $(rL,Q)\subset (L,Q)$ be an inclusion of quadratic lattices of signature $(n,2)$ as before, in particular $L$ is maximal with discriminant coprime to $r$. Let $\widetilde{\mathcal{M}}_r$, $\mathcal{M}$ be the normal integral models over $\Z$ of the GSpin Shimura varieties associated to $(rL,Q)$ and $(L,Q)$ constructed in the previous section. 

Let $\Sigma$ be a rational polyhedral $K_r$-admissible cone decomposition. By the main theorem of \cite[Theorem 1]{madapusitor}, $\widetilde{\mathcal{M}}_r$ has a toroidal compactifications $\widetilde{\mathcal{M}}_r^{\Sigma}$  which is proper, normal and flat over $\Z$. Over $\C$, it is compatible with the toroidal compactification of its complex fiber as constructed in \cite[Chapter III]{amrt}. Let $\widehat{\CH}^1(\widetilde{\cM}^{\Sigma}_r,\mathcal{D}_{pre})_{\Q}$ be the first arithmetic Chow group of prelog forms as defined in \cite[Definition 1.15]{burgos}.
\medskip

For any toroidal stratum representative $(\Xi,\sigma)$ of type III where $\sigma$ is a ray, let $\cB^{\Xi,\sigma}$ be the corresponding boundary divisor of $\widetilde{\mathcal{M}}_r^\Sigma$ and for $\Upsilon$ a toroidal stratum representative of type II, let $\cB^{\Upsilon}$ be the corresponding boundary divisor of type II. Then by \cite[Theorem 1]{madapusitor}, both $\cB^{\Xi,\sigma}$ and $\cB^{\Upsilon}$ are relative Cartier divisors over $\Z$, hence flat over $\Z$.

Let $\beta\in L/rL$ and $m\in\Z$. We have defined in the previous section a special divisor \[\mathcal{Z}(\beta,m)\rightarrow \widetilde{\mathcal{M}}^\Sigma_r,\]
and following \cite{bruinierzemel}, see also \cite[Theorem 1.2]{tayoumock}, we define a corrected divisor in $\widetilde{\mathcal{M}}^\Sigma_r$: 

\begin{align}\label{completed-divisor}
\mathcal{Z}^{tor}(\beta,m)=\mathcal{Z}(\beta,m)+\sum_{\Upsilon}\mu_\Upsilon(\beta,m)\cB^\Upsilon+\sum_{(\Xi,\sigma)}\mu_{\Xi,\sigma}(\beta,m)\cB^{\Xi,\sigma},    
\end{align}
where the coefficients $\mu_\Upsilon(\beta,m)$ and $\mu_{\Xi,\sigma}(\beta,m)$ are defined in \cite[Eqs (4.5.1), (4.6.1)]{tayouboundary}.

Following \cite{bruinier,bruinierzemel}, the divisors $\mathcal{Z}^{tor}(\beta,m)$ can be endowed with a Green function $\Phi_{\beta,m}$ such that the pair: 
\[\widehat{\mathcal{Z}}^{tor}(\beta,m)=(\mathcal{Z}^{tor}(\beta,m),\Phi_{\beta,m})\] 
is an element of $\widehat{\CH}^1(\widetilde{\cM}^{\Sigma}_r,\mathcal{D}_{pre})_{\Q}$.


The Hodge line bundle $\boldsymbol{\omega}_r$ has a canonical Hermitian metric with prelog singularities, the \textit{Petersson metric}, see \cite[Equation (4.2.3)]{howardmadapusi} for a definition. Hence it defines an element 
\[\widehat{\boldsymbol{\omega}}_r\in\widehat{\mathrm{CH}}^1(\widetilde{\mathcal{M}}_r^{\Sigma},\mathcal{D}_{pre})~.\] 
\subsubsection{Arithmetic height and main estimates}

Let $K$ be a number field and let 
\[\rho:\mathscr{S}=\Spec(\mathcal{O}_K)\rightarrow \widetilde{\mathcal{M}}_{r}^{\Sigma}\] be an $\mathcal{O}_K$-point. Then the height $h_{\widehat{\mathcal{Z}}(\beta,m)}(\mathscr S)$ of $\mathscr{S}$ with respect to $\widehat{\mathcal{Z}}(\beta,m)$ is defined as the image of $\widehat{\mathcal{Z}}(\beta,m)$ under the composition: 

\[ \widehat{\mathrm{CH}}^1(\widetilde{\mathcal{M}}_r^{\Sigma},\mathcal{D}_{pre})\xrightarrow{\rho^*}\widehat{\mathrm{CH}}^1(\mathscr{S})\xrightarrow{\widehat{\deg}} \R~.\]
It is given by, see \cite[Equation (3.1)]{sstt}: 
\[h_{\widehat{\mathcal{Z}}(\beta,m)}(\mathscr S)=\sum_{\mathfrak{P}\subset \mathcal{O}_K}(\mathcal{Z}^{tor}(\beta,m).\mathscr{S})_\mathfrak{P}\log|\mathcal{O}_K/\mathfrak{P}|+\sum_{x\in\mathscr{S}(\C)}\Phi_{\beta,m}(x)~,\]
where for a prime $\mathfrak{P}\subset \mathcal{O}_K$:
\[(\mathcal{Z}^{tor}(\beta,m).\mathscr{S})_\mathfrak{P}=\sum_{v\in \left(\mathscr{S}\times_{\widetilde{\mathcal{M}}^{\Sigma}_{r}}\mathcal{Z}^{tor}(\beta,m)\right)(\overline{\mathbb{F}}_{\mathfrak{P}})}\mathrm{length} \left(\mathcal{O}_{\mathcal{Z}^{tor}(\beta,m)\times_{\widetilde{\mathcal{M}}^{\Sigma}_{r}}\mathscr{S},v}\right)~,\]
and ${\mathbb{F}}_{\mathfrak{P}}$ is the residual field of ${\mathfrak{P}}$.

\subsection{Main estimates}
Let $\beta \in L/rL$ and let $m\in \Z$ represented by $\beta+rL$. 

Let $c(\beta,m)$ be the $(\beta,m)$-th Fourier coefficient of Eisentein series $E_{rL}$ as in \cite[Prop. 3.1, (3.3)]{bruinierintegrals}, see also \cite[\S 3.3]{sstt}. For $n\geq 3$, we have $|c(\beta,m)|=-c(\beta,m)\ll_\epsilon m^{\frac{n}{2}}$ along the integers $m$ representable by $\beta+rL$.  

Our first main result is the following global height bound. 
\begin{proposition}\label{global-bound}
    As $m\rightarrow \infty$ and represented by $\beta+rL$, we have 
    \[\sum_{\mathfrak{P}\subset \mathcal{O}_K}(\mathcal{Z}(\beta,m).\mathscr{S})_\mathfrak{P}\log|\mathcal{O}_K/\mathfrak{P}|+\sum_{x\in\mathscr{S}(\C)}\Phi_{\beta,m}(x)=O(c(\beta,m))~.\]
\end{proposition}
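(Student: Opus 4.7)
The plan is to express each arithmetic class $\widehat{\cZ}(\beta,m)$ as an integer multiple of the Hodge class $\widehat{\boldsymbol{\omega}}_r$ in $\widehat{\CH}^1(\widetilde{\cM}_r^{\Sigma},\cD_{\mathrm{pre}})_\Q$ by means of a well-chosen Borcherds product, and then to pull this identity back along $\rho$ and take the arithmetic degree.

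First, by Borcherds' existence theorem (Serre duality for vector-valued weakly holomorphic modular forms), for each pair $(\beta,m)$ with $m$ represented by $\beta+rL$ there is a weakly holomorphic modular form $f_{\beta,m}$ of weight $1-n/2$ for the Weil representation attached to $rL$ whose principal part is $q^{-m}(\mathbf{e}_\beta+\mathbf{e}_{-\beta})$. Pairing $f_{\beta,m}$ against the Eisenstein series $E_{rL}$ identifies its constant term $c_{f_{\beta,m}}(0,0)$ with $-2c(\beta,m)$, up to contributions bounded independently of $m$. Applying Borcherds' singular theta lift produces a meromorphic modular form $\Psi=\Psi(f_{\beta,m})$ on $M_r(\C)$, realized as a rational section of $\boldsymbol{\omega}_r^{\otimes(-c(\beta,m))}$, whose divisor on $M_r(\C)$ is $\cZ(\beta,m)$ and whose Petersson log-norm $-\log\|\Psi\|^2_{\mathrm{Pet}}$ gives the Bruinier Green function $\Phi_{\beta,m}$.

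The crucial step is to extend $\Psi$ to a rational section of $\boldsymbol{\omega}_r^{\otimes(-c(\beta,m))}$ on the integral toroidal compactification $\widetilde{\cM}_r^{\Sigma}$, with divisor equal to the toroidal completion $\cZ^{tor}(\beta,m)$. Away from primes dividing $r$, this extension is provided by the Howard--Madapusi integral Borcherds theory. At primes dividing $r$ one works instead on the ad hoc integral model constructed in Section 3, and one must verify both that $\Psi$ has no unexpected vertical components above $r$ and that its boundary contribution matches the prescribed coefficients $\mu_{\Upsilon}(\beta,m)$ and $\mu_{\Xi,\sigma}(\beta,m)$ appearing in the definition of $\cZ^{tor}(\beta,m)$. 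This yields the identity
\[\widehat{\cZ}(\beta,m) \;=\; -c(\beta,m)\cdot\widehat{\boldsymbol{\omega}}_r\]
in $\widehat{\CH}^1(\widetilde{\cM}_r^{\Sigma},\cD_{\mathrm{pre}})_\Q$. Pulling back along $\rho$ and applying $\widehat{\deg}$ gives
\[h_{\widehat{\cZ}(\beta,m)}(\mathscr{S}) \;=\; -c(\beta,m)\cdot h_{\widehat{\boldsymbol{\omega}}_r}(\mathscr{S}) \;=\; O(c(\beta,m)),\]
since $h_{\widehat{\boldsymbol{\omega}}_r}(\mathscr{S})$ is a fixed real number independent of $m$. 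Because $\mathscr{S}$ avoids every boundary divisor $\cB^{\Upsilon}$ and $\cB^{\Xi,\sigma}$, the intersection numbers with $\cZ^{tor}(\beta,m)$ and $\cZ(\beta,m)$ coincide, yielding the stated bound.

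The main obstacle is this integral extension step: the model $\widetilde{\cM}_r$ is only normal and flat (not necessarily smooth or regular) at primes dividing $r$, and its level structure at $r$ lies outside the scope of the standard Howard--Madapusi integral Borcherds theory. Controlling $\Psi$ at these primes, and verifying the divisorial identity cleanly, is precisely the work carried out in Section 4.
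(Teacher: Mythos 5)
The overall strategy is right—use a Borcherds product to reduce the height of the special divisor to a multiple of the Hodge class—but there are two genuine gaps.

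\textbf{The input modular form need not exist.} You invoke ``Borcherds' existence theorem'' to produce $f_{\beta,m}$ with principal part exactly $q^{-m}(\mathbf{e}_\beta + \mathbf{e}_{-\beta})$. This is false in general: by the Borcherds/Bruinier duality (the version of Serre duality you mention), a prescribed principal part can be realized by a weakly holomorphic form of weight $1-n/2$ for $\overline{\rho}_{rL}$ \emph{only if} it pairs to zero against every cusp form of weight $1+n/2$ for $\rho_{rL}$. For a single $m$ this obstruction will typically be nonzero. The paper handles this with two successive corrections. First it subtracts a finite $\Q$-linear combination of $q^{-m_i}(\mathbf{e}_\beta+\mathbf{e}_{-\beta})$, $i\in I$, where the $m_i$ are chosen so the functionals $a_{\beta,m_i}$ span the image of $m\mapsto a_{\beta,m}$ in $S_{1+n/2}(\rho_{rL})^{*}$ — this kills the cusp-form obstruction and is the content of invoking \cite[Theorem 1.17]{bruinier}. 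Second, it subtracts a further combination $\sum_{j\in J}u_j(m)\tilde f_{m_j}$ to force all Fourier coefficients $c(\gamma,0)$ with $\gamma\neq 0$ to vanish; this is needed so that the boundary constant $A$ in the Fourier--Jacobi expansion equals $1$, which is exactly what \Cref{flat-Borcherds} requires to rule out vertical components of $\mathrm{div}\,\psi(f_m)$ at primes dividing $r$. Your proposal flags the vertical-component issue as ``the main obstacle'' but does not supply the mechanism (the $A=1$ normalization) that resolves it. The upshot of both corrections is that the resulting identity in $\widehat{\CH}^1(\widetilde{\cM}_r^\Sigma,\cD_{\mathrm{pre}})_\Q$ is not your clean $\widehat{\cZ}(\beta,m) = -c(\beta,m)\widehat{\boldsymbol{\omega}}_r$ but rather
\[
\frac{c(0,0)}{2}\widehat{\boldsymbol{\omega}}_r \;=\; \widehat{\cZ}^{tor}(\beta,m)+\sum_{\ell\in\tilde I} z_\ell(m)\,\widehat{\cZ}^{tor}(\beta,m_\ell),
\]
with $\tilde I$ a finite set independent of $m$ and $z_\ell(m)=O(c(\beta,m))$. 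The $O(c(\beta,m))$ control on the correction coefficients, coming from cusp-form growth bounds, is what ultimately gives the stated estimate; it cannot be bypassed by asserting that an uncorrected form exists.

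\textbf{$\mathscr{S}$ does not avoid the boundary.} You conclude by asserting that $\mathscr{S}$ avoids every boundary divisor $\cB^\Upsilon$, $\cB^{\Xi,\sigma}$, so that intersections with $\cZ^{tor}(\beta,m)$ and $\cZ(\beta,m)$ coincide. This is not an assumption of the theorem: the period map $\rho$ may well meet the boundary at some finite primes (the paper introduces $N$ precisely as the product of such primes). What the paper actually does is retain the boundary corrections $\sum_\Upsilon \mu_\Upsilon(\beta,m)\cB^\Upsilon + \sum_{(\Xi,\sigma)}\mu_{\Xi,\sigma}(\beta,m)\cB^{\Xi,\sigma}$ and appeal to \cite[Proposition 4.13]{tayouboundary} for the growth bounds $\mu_\Upsilon(m)\ll m^{\frac b2 -1+\epsilon}$ and $\mu_{\Xi,\sigma}(m)\ll m^{\frac{b-1}{2}+\epsilon}$, which are small enough to be absorbed into $O(c(\beta,m))$.
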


Recall from \cite[\S 6]{sstt} that for a subset $S\subset \N$, the logarithmic asymptotic density is defined as:
\[\limsup_{X\rightarrow \infty} \frac{\log(\left|\{s\in S| X\leq s<2X\}\right|)}{\log X}~.\]

The second main results are estimates in average of multiplicities at archimedean and non-archimedean places.

\begin{proposition}\label{local-infinite}
For every $x\in\mathscr{S}(\C)$, there is a decomposition: 
\[\Phi_{\beta,m}(x)=c(\beta,m)\log(m)+A(\beta,m)+o\left(c(\beta,m)\log(m)\right).\]
Moreover, there exists a subset $S_{\bad}\subset \Z_{>0}$ of logarithmic asymptotic density $0$ such that
\begin{equation*}
\lim_{\substack{m\to\infty\\m\not\in S_\bad}} \frac{A(\beta,m)}{m^{\frac{b}{2}}\log m}=0.   
\end{equation*}
\end{proposition}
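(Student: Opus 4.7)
The plan is to adapt the strategy of \cite[\S 5--6]{sstt} to the present level-$r$ setting. For a point $x \in \mathscr{S}(\C)$, viewed via $\mathscr{S}\hookrightarrow \widetilde{\mathcal{M}}_r^\Sigma$, the Hodge structure determines a negative-definite $2$-plane $v_x\subset L_\R$ with positive-definite orthogonal complement $v_x^\perp$; any $\lambda\in L_\R$ decomposes orthogonally as $\lambda=\lambda_{+}+\lambda_{-}$ with $\lambda_{+}\in v_x^\perp$ and $\lambda_{-}\in v_x$. The first step is to write the Green function, following Bruinier's regularized theta integral \cite{bruinier,bruinierzemel}, as an explicit lattice sum
\begin{equation*}
\Phi_{\beta,m}(x) \;=\; \sum_{\substack{\lambda\in\beta+rL\\ Q(\lambda)=m}} \xi\bigl(Q(\lambda_{-})\bigr),
\end{equation*}
where $\xi$ is smooth away from $0$, has a logarithmic singularity at $0$, and decays exponentially as $Q(\lambda_-)\to -\infty$. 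The contribution of the toroidal boundary terms in $\mathcal{Z}^{tor}(\beta,m)$ gives only a bounded correction at $x$ and does not interfere with the asymptotics.

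Next, I would fix a slowly growing cut-off $T_m$ (for instance $T_m=\log\log m$) and split the sum into a \emph{far} piece $|Q(\lambda_-)|\ge T_m$ and a \emph{close} piece $|Q(\lambda_-)|< T_m$. For the far piece, a Siegel--Weil-type comparison of the lattice sum with the integral against the Eisenstein measure, whose $(\beta,m)$-Fourier coefficient is $c(\beta,m)$, produces
\begin{equation*}
\sum_{\substack{\lambda\in\beta+rL,\,Q(\lambda)=m\\ |Q(\lambda_-)|\ge T_m}} \xi\bigl(Q(\lambda_-)\bigr) \;=\; c(\beta,m)\log m \,+\, o\bigl(c(\beta,m)\log m\bigr),
\end{equation*}
exactly along the lines of \cite[Proposition 5.3]{sstt} and \cite{bruinierintegrals}. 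I would then \emph{define} $A(\beta,m)$ to be the contribution of the close piece, which delivers the decomposition claimed in the proposition.

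The remaining task is to show $A(\beta,m)=o(m^{b/2}\log m)$ off a set $S_\bad\subset\Z_{>0}$ of logarithmic density zero. Since $\xi$ has only a logarithmic singularity at $0$, each close vector contributes at most $O(\log T_m)$, so it suffices to prove that, outside a density-zero set, the number of $\lambda\in\beta+rL$ with $Q(\lambda)=m$ and $|Q(\lambda_-)|<T_m$ is $o(m^{b/2}\log m/\log T_m)$. I would argue dyadically: for $m\in[X,2X]$, such $\lambda$ are constrained to a tube around the positive plane $v_x^\perp$ whose slab-volume is controlled, and summing the count over this dyadic range and applying Markov's inequality gives an exceptional set of cardinality $o(\log X)$, hence logarithmic density zero.

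The main obstacle is the density-zero bound for $S_\bad$. It relies on a \emph{no accidental resonance} property: the plane $v_x$ is irrational enough with respect to the $\Q$-structure on $L$ so that lattice points of prescribed norm in $\beta+rL$ do not cluster near $v_x^\perp$ for too many values of $m$. In \cite[\S 6]{sstt} this is implemented via a second-moment argument that exploits the generic behavior of the restriction of $Q$ to a rational perturbation of $v_x^\perp$; the coset shift by $\beta$ and the replacement of $L$ by $rL$ are harmless because $rL$ is commensurable with $L$ and the coset translate does not affect the volume estimates. The transfer is therefore mostly bookkeeping, the only substantive issue being to certify the genericity hypothesis for each of the finitely many archimedean places $x\in\mathscr{S}(\C)$, which follows from the non-isotriviality assumptions already in force.
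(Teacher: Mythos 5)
Your decomposition misidentifies where the $c(\beta,m)\log m$ term comes from, and this is a genuine gap. In Bruinier's theory, which the paper uses, the Green function decomposes as $\Phi_{\beta,m}(x)=\phi_{\beta,m}(x)-b'(\beta,m,\tfrac{k}{2})$, where $\phi_{\beta,m}$ is the regularized lattice sum of \eqref{green-function} and $b'(\beta,m,\tfrac{k}{2})$ is the derivative at $s=\tfrac{k}{2}$ of a Dirichlet-series-type function built from local representation numbers. The $c(\beta,m)\log m$ main term lives entirely in $-b'(\beta,m,\tfrac{k}{2})$, not in a ``far piece'' of any lattice sum. Your formula $\Phi_{\beta,m}(x)=\sum_{\lambda}\xi(Q(\lambda_{-}))$ with $\xi$ exponentially decaying describes Kudla's Green function rather than Bruinier's; the two differ by precisely the constant-in-$x$ term that carries the $\log m$ growth. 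Summing an exponentially decaying kernel over lattice vectors with $|Q(\lambda_-)|\ge T_m$ cannot by itself produce a $\log m$ factor, so the claimed Siegel--Weil comparison for the far piece would not deliver the asymptotic you want. The paper instead isolates the $\log m$ term analytically in §5.2, by taking a logarithmic derivative of $b(\beta,m,s)$ and carrying out a prime-by-prime analysis of the local density polynomials $\mathrm{L}^{(p)}_{\beta,m}$; the only place where the coset $\beta+rL$ causes new work is at primes $p\mid r$, handled by a direct computation of representation densities using self-duality of $L$ at $p$ (the content of \Cref{lemma:bound}). The lattice-sum piece $\phi_{\beta,m}(x)$ is then shown, separately, to equal $A(\beta,m,x)+O(m^{b/2})$.

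For the density-zero bound on $A(\beta,m,x)$, your plan to re-run a second-moment/Markov argument and verify a genericity (``no accidental resonance'') hypothesis is more work than necessary, and the appeal to a non-isotriviality assumption is a mismatch with the number-field setting (the paper imposes no such hypothesis in \Cref{main}). The paper's argument is a one-line domination: since each term $-\log|Q(\lambda_x)|$ is nonnegative, $A(\beta,m,x)\ge0$, and since the cosets $\beta+rL$ for $\beta\in L/rL$ partition $L$, we have $\sum_{\beta}A(\beta,m,x)=A(m,x)$, hence $0\le A(\beta,m,x)\le A(m,x)$; the density-zero bound for $A(m,x)$ from \cite[Theorem 6.1]{sstt} then applies verbatim. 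This avoids any re-examination of the moment computation and makes the coset shift genuinely free, which is the observation your proposal is groping toward but does not quite land on.
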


Next, we have the estimate at the non-archimedean places. Let $N$ be the product of primes where $\rho$ intersects the boundary of $\widetilde{\mathcal{M}}_r^{\Sigma}$. 
\begin{proposition}\label{good-finite}
Given $D, X\in \Z_{>0}$, $D$ coprime to $N$, let $S_{D,X}$ denote the set \[\{m\in \Z_{>0}\mid X \leq m<2X, \sqrt{\frac{m}{D}}\in \Z,\, (m,N)=1\}.\]
For a fixed prime $\fP$ and a fixed $D$, we have
\[\sum_{m\in S_{D,X}}(\mathscr S . \cZ(\beta,m))_\fP=o(X^{\frac{b+1}{2}}\log X).\] 
\end{proposition}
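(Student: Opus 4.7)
The plan is to interpret the non-archimedean intersection multiplicity at $\fP$ as a weighted count of special endomorphisms of the Kuga--Satake abelian variety at the unique geometric point $v$ of $\mathscr{S}$ above $\fP$, and to bound this count by combining a lattice-point estimate with a deformation-theoretic control on the local length, adapting the template of \cite[\S 6]{sstt}.

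The first step is to identify the set of $\overline{\F}_\fP$-points of $\mathscr{S} \times_{\widetilde{\cM}_r^\Sigma} \cZ(\beta,m)$ lying above $\fP$ with the set $\{f \in V_\beta(\eta^*\mathcal{A}_v) : Q(f) = m\}$, and to express the length $\ell_\fP(f)$ of the local ring at $(v,f)$ via the deformation theory of special endomorphisms (Serre--Tate and Grothendieck--Messing), as in \cite[\S 2.7]{agmp-compositio}, so that $\ell_\fP(f)$ measures the order of vanishing of $f_{dR}$ modulo $F^0\mathbb{V}_{dR}$ along $\mathscr{S}$ at $v$. Exchanging the order of summation then gives
\[
\sum_{m \in S_{D,X}} (\mathscr{S} . \cZ(\beta,m))_\fP = \sum_{\substack{f \in V_\beta(\eta^*\mathcal{A}_v) \\ Q(f) \in S_{D,X}}} \ell_\fP(f).
\]

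The second step is the counting estimate. The lattice $V(\eta^*\mathcal{A}_v)$ is positive definite of some rank $d \le b+2$, with equality $d = b+2$ holding precisely when $v$ lies in the supersingular locus of $\widetilde{\cM}_r \otimes \overline{\F}_p$. In the non-supersingular case $d \le b+1$, so the Siegel--Minkowski bound on representation numbers, combined with the sparsity $|S_{D,X}| = O(\sqrt{X/D})$, already yields $o(X^{(b+1)/2}\log X)$. In the supersingular case, I would stratify the sum by the length invariant $\ell_\fP(\cdot)$: vectors with $\ell_\fP(f) \ge 2$ are forced to satisfy additional higher-order vanishing conditions on $f_{dR}$ modulo $F^0\mathbb{V}_{dR}$ along $\mathscr{S}$, restricting them to a sublattice of rank $\le b+1$ whose contribution is $o(X^{(b+1)/2}\log X)$ by the non-supersingular case, while vectors with $\ell_\fP(f) = 1$ are counted by a theta-type series evaluated along the sparse set $S_{D,X}$, whose contribution is again $o(X^{(b+1)/2}\log X)$ by equidistribution estimates for representation numbers of positive definite quadratic forms along sparse arithmetic progressions.

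The main obstacle, relative to the analogous estimate in \cite{sstt}, is that $\fP$ may lie above the level prime $p \mid r$, where $\widetilde{\cM}_r$ is defined as a normalization and need not be smooth. To handle this, I would invoke the compatibility \eqref{pull-special-divisors} between the special divisors on $\widetilde{\cM}_r$ and their pullbacks from $\cM$, together with the flatness of $\cZ(\beta,m)$ proved in \Cref{fltaness-special-cycles} and \Cref{normality-flatness}, which together reduce the deformation-theoretic computation of $\ell_\fP(f)$ to the almost self-dual setting of $\cM$, where the standard arguments of \cite{agmp-compositio} and \cite{howardmadapusi} apply directly.
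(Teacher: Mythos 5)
Your proposal misses the paper's central and much simpler device, and it also fails to cover a whole case that the paper must handle.

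The paper does \emph{not} re-run the deformation-theoretic / lattice-counting machinery of \cite{sstt} for the divisors $\cZ(\beta,m)$ on $\widetilde{\cM}_r^\Sigma$. Instead, it uses the decomposition $\eta^*\cZ(m) = \bigcup_{\gamma\in L/rL}\cZ(\gamma,m)$ from \eqref{pull-special-divisors} purely as a \emph{comparison of effective divisors}: since $\cZ(\beta,m)$ is a summand of the pullback $\eta^*\cZ(m)$, one immediately gets the monotonicity
\[
0 \;\le\; (\mathscr S . \cZ(\beta,m))_\fP \;\le\; (\mathscr S . \cZ(m))_\fP,
\]
where the right-hand side is the intersection with the no-level-structure divisor on $\cM^\Sigma$. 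This reduces the proposition at a stroke to the estimates already established in \cite[Theorem 7.1]{sstt} (interior primes) and in \cite[Proposition 5.4]{tayouboundary} (boundary primes), and no new counting or length analysis is needed. You invoke \eqref{pull-special-divisors}, but only at the very end and only to transport the \emph{deformation theory} of $\ell_\fP(f)$ to the almost self-dual model — you do not extract the termwise inequality on intersection numbers, which is the actual content one needs. Re-deriving the lattice-point estimates from scratch is both much more work and, in your sketch of the supersingular stratification, substantially under-specified: the decomposition by length in \cite[\S 6--7]{sstt} is more delicate than ``restrict to a sublattice of rank $\le b+1$,'' and you would need to re-justify all of it in the level-$r$ setting rather than just quote it.

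The second and more serious gap is that your entire argument presupposes $\fP$ is a prime of good (interior) reduction, so that the geometric point $v$ above $\fP$ carries a Kuga--Satake abelian variety and $(\mathscr S.\cZ(\beta,m))_\fP$ has a description as a length-weighted count of special endomorphisms of $\mathcal A_v$. But $\mathscr S$ is allowed to meet the boundary of $\widetilde{\cM}_r^\Sigma$ at $\fP$; the hypothesis $(m,N)=1$ in $S_{D,X}$, with $N$ the product of boundary primes, is precisely there because the boundary case must be addressed. Over a type II or type III boundary component there is no abelian variety at $v$ and the moduli description $\{f\in V_\beta(\eta^*\mathcal A_v): Q(f)=m\}$ does not even make sense; the paper handles these cases separately by again applying the monotonicity inequality and then invoking the boundary estimates of \cite{tayouboundary}, which rely on a completely different (Fourier--Jacobi / degeneration) analysis. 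Your proposal says nothing about these primes, so as written it does not prove the proposition.
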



\subsection{Proof of \Cref{main-kuga-satake}}\label{section-proof}
Assuming \Cref{global-bound,local-infinite,good-finite} from the previous section, we prove here \Cref{main-kuga-satake} which proves \Cref{principal} and hence \Cref{main}. 
\medskip 

Let $K$ a number field and fix an embedding $\sigma:K\hookrightarrow \C$. Let $X$ be a K3 surface over $K$, and let $(\mathrm{T}(X_\sigma(\C)),Q)$ be the transcendental lattice of $X_\sigma(\C)$. Let $\alpha\in\mathrm{Br}(X)$ be a Brauer class of geometric torsion order $r$, assumed to be coprime to the discriminant of $\mathrm{T}(X_\sigma(\C))$. Let $\beta\in \mathrm{T}(X_\sigma(\C))/r\mathrm{T}(X_\sigma(\C))$ be a lift (multiplied by $r$) given by \Cref{lifts} and let $A$ be the Kuga-Satake abelian variety associated to $X$. 

Let $\mathrm{T}(X_\sigma(\C))\subset L$ be a maximal lattice containing $\mathrm{T}(X_\sigma(\C))$. Then we can see $\beta\in L/rL$. We are now in the setup of the previous sections: let $\widetilde{\mathcal{M}}_r^{\Sigma}$ be the toroidal compactification of the integral model of the Shimura variety associated to $(rL,Q)$ constructed in the previous sections. The Kuga-Satake abelian variety defines a $K$-point in $\widetilde{\mathcal{M}}_r^{\Sigma}$ which extends to a morphism: 
\[\rho:\mathscr{S}\rightarrow \widetilde{\mathcal{M}}_r^{\Sigma},\]
where $\mathscr{S}=\Spec(\mathcal{O}_K)$.

Assume by contradiction that the conclusion of \Cref{main-kuga-satake} does not hold. Then there exists finitely many primes $\mathfrak{P}_1,\ldots,\mathfrak{P}_s$ such that for every $m\in \Z$, the support of the intersection of $\mathscr{S}$ and $\mathcal{Z}(\beta,m)$ is contained in $\{\mathfrak{P}_1,\ldots,\mathfrak{P}_s\}$.

By \Cref{local-infinite}, there exists a subset $S_{\bad}\subset \Z_{>0}$ of logarithmic asymptotic density zero such that outside $S_{\bad}$ we have:
\[\sum_{x\in\mathscr{S}(\C)}\Phi_{\beta,m}(x)\asymp c(\beta,m)\log(m)+o(c(\beta,m)\log(m))\asymp -|c(\beta,m)|\log(m).\]

Let \[S^{good}_{D,X}=\{m\in S_{D,X}| m\notin S_{bad},\, (m,N)=1,\, m\equiv Q(\beta)\pmod{r}\}.\] 
Since $r$ is coprime to $N$, the set $\{m\in S_{D,X}| (m,N)=1,\, m\equiv Q(\beta)\pmod{r}\}$ has asymptotic density $\frac{1}{2}$, and we see that $|S^{good}_{D,X}|\asymp X^{\frac{1}{2}}$. \Cref{representation} ensures that any $m\in S^{good}_{D,X}$ is representable by $\beta+rL$, hebce $|c(\beta,m)|\gg X^{\frac{n}{2}}$ for $m\in S^{good}_{D,X}$. Thus we get 
\begin{align}\label{contradiction}
    \sum_{m\in S^{good}_{D,X}}\sum_{x\in\mathscr{S}(\C)}\Phi_{\beta,m}(x)\gg X^{\frac{n+1}{2}}\log X.
\end{align}
On the other hand, by \Cref{good-finite,good-finite}, we get by summing over the finitely many places where either $\mathscr{S}$ intersects a $\cZ(\beta,m)$ or which are of bad reduction:
\begin{align}\label{contradiction-2}\sum_{m\in S^{\good}_{D,X}}(\mathscr S.\cZ(\beta,m))_{\mathfrak P}\log|\mathcal{O}_K/\mathfrak{P}|=o(X^{\frac{n+1}{2}}\log X).
\end{align}

The combination of \Cref{contradiction} and \Cref{contradiction-2} contradicts \Cref{global-bound}. This proves the desired result.
\medskip 

The rest of the paper is devoted to proving the main estimates in Section 3.4.
\begin{remarque}
    In the previous discussion, the assumption $(r,N)=1$ was used to produce infinitely many integers $m$ that satisfy $(m,N)=1$ and $m\equiv Q(\beta)\pmod{r}$, hence its appearance in our main theorem. In fact, if $(r,N)>1$, then we can still prove \Cref{main} under a weaker assumption as follows.  First, observe that the norm $Q(\alpha)$ of the class $\alpha$ is a well-defined element in $\Z/r\Z$ and is equal to $Q(\beta)$, for a lift $\beta$. If the reduction of $Q(\alpha)$ modulo $\gcd(r,N)$ is invertible,  then we can still find an infinite sequence of integers $m$ that satisfy $(m,N)=1$ and $m\equiv Q(\alpha)\pmod{r}$. This is enough to prove \Cref{main}.
\end{remarque}
\section{Global estimate}

We prove in this section \Cref{global-bound}. Our method is inspired from \cite{howardmadapusi} and relies on Fourier-Jacobi expansions of Borcherds products at the cusps of GSpin Shimura varieties.
\subsection{Background results}\label{background}
Let $(L,Q)$ be a quadratic lattice of signature $(n,2)$ and let $\mathcal{M}$ be the normal integral model over $\Z[\Omega^{-1}]$ of the GSpin Shimura variety associated to $(L,Q)$ constructed in \cite{howardmadapusi}. Let $f\in M^{!}_{1-\frac{n}{2}}(\rho_L)$ be a weakly holomorphic modular form of weight $1-\frac{n}{2}$ with respect to the conjugate Weil representation $\overline{\rho}_L$. We assume that the principal part of $f$ has integral coefficients and denote it by: 
\[\sum_{\beta\in L^\vee/L}\sum_{\underset{m<0}{m\in -Q(\beta)+\Z}}c(\beta,m)q^m,\,c(\beta,m)\in\Z~.\]

By the main theorem of \cite[Theorem A]{howardmadapusi}, there exists a Borcherds products $\psi(f)$  associated to $f$ which defines, after multiplying $f$ by a suitable integer, a rational section of $\boldsymbol{\omega}^{\frac{c(0,0)}{2}}$ over $\Q$ and its divisor in $\mathcal{M}$ is equal to: 

\[\mathrm{div}(\psi(f))=\sum_{(\beta,m)}c(\beta,-m)\mathcal{Z}(\beta,m).\]

Let $\Sigma$ an admissible polyhedral cone decomposition and let $\mathcal{Z}^{tor}(\beta,m)$ be the completed divisor as defined in \Cref{completed-divisor}. Then by \cite[Proof of Thm 3.1]{tayouboundary}, the divisor of the Borcherds products on $\mathcal{M}^\Sigma$ is equal to: 
\[\mathrm{div}(\psi(f))=\sum_{(\beta,m)}c(\beta,-m)\mathcal{Z}^{tor}(\beta,m)~.
\]
In fact, the above relation can be upgraded into an equality by \cite[Equation (1.2.2)]{howardmadapusi} in $\widehat{\CH}^1(\cM^{\Sigma},\mathcal{D}_{pre})_{\Q}$:
\[\widehat{\mathrm{div}}(\psi(f))=\sum_{(\beta,m)}c(\beta,-m)\widehat{\mathcal{Z}}^{tor}(\beta,m)~.\]
On the other hand, 
\[\widehat{\mathrm{div}}(\psi(f))=\frac{c(0,0)}{2}\widehat{\boldsymbol{\omega}}~.\]
Hence we get the following equality: 
\[\frac{c(0,0)}{2}\widehat{\boldsymbol{\omega}}=\sum_{(\beta,m)}c(\beta,m)\widehat{\mathcal{Z}}^{tor}(\beta,m)~.
\]


\subsection{Expansions at the cusp}
We assume now that we are given two quadratic lattices $(rL,Q)\subset (L,Q)$ where $L$ is a maximal lattice, almost self-dual at $r$. Let $\Sigma$ be a $K_r$-admissible polyhedral cone decomposition and let $\widetilde{\mathcal{M}}^\Sigma_r$ be the toroidal compactification of the integral model of the GSpin Shimura variety associated to $(rL,Q)$ constructed in \Cref{level-structure}. It is normal, proper and flat over $\Z$.
\medskip 

We recall in this section the theory of integral $q$-expansions at the cusps following \cite[\S\S 5,8]{howardmadapusi}. We assume that $(rL,Q)$ is isotropic and let $(\Xi,\sigma)$ be a toroidal stratum representative such that $\sigma$ is top dimensional. 

We will first describe the Fourier-Jacobi expansion over $\C$ and then over $\Z_{(p)}$ where $p$ is a prime number. Associated to the cusp label representative $\Xi$, there is an admissible parabolic subgroup $P_\Xi\subset G$, a connected component $\cD^{\circ}$ of $\cD$ and an element $h\in G(\mathbb{A}_f)$. Such cusp label representative determines a mixed Shimura datum $(Q_\Xi,\cD_\Xi)$, see \cite[\S 4.4]{howardmadapusi}. The unipotent radical $W_\Xi$ and its center $U_\Xi$ are both equal and are described at the level of $\Q$-points by: 
\[U_\Xi(\Q)\simeq K_\Q\otimes I_\Q\]
where $I_\Q$ is the $\Q$-isotropic line determined by the cusp label representative $\Xi$, $I=rL\cap I_\Q$, and $K=I^{\bot}/I$. Define the $\Z$-lattice $\Gamma_\Xi=K_\Xi\cap U_\Xi(\Q)$ and the torus: 
\[T_\Xi=\Gamma_\Xi(-1)\otimes \mathbb{G}_m.\]

The level $K_r$ determines a mixed Shimura variety $M_{\Xi}$ associated to the mixed Shimura datum $(Q_\Xi,\cD_\Xi)$. Let $K_{\Xi0}$ be the compact open subgroup of $Q_\Xi(\mathbb{A}_f)$ determined as in the end of page 220 of \cite{howardmadapusi}. It defines another mixed Shimura variety $M_{\Xi0}$ over $\Q$ associated to the same datum $(Q_\Xi,\cD_\Xi)$ and an \'etale morphism of Deligne--Mumford stacks $M_{\Xi0}\rightarrow M_{\Xi}$.

The toroidal stratum representative $(\Xi,\sigma)$ determines partial compactifications $M_{\Xi}(\sigma)$, $M_{\Xi0}(\sigma)$ and  $0$-dimensional boundary component $Z^{\Xi,\sigma}$ of $M_r^\Sigma$, $M_\Xi(\sigma)$, and $M_{\Xi 0}(\sigma)$. We denote by $\widehat{M}_{r}^\Sigma$, resp.  $\widehat{M}_{\Xi}(\sigma)$, $\widehat{M}_{\Xi 0}(\sigma)$, the formal completion of $M_r^\Sigma$, resp. $M_{\Xi}(\sigma)$, $M_{\Xi 0}(\sigma)$ along $Z^{\Xi,\sigma}$. By a theorem of Pink \cite[Corollary 7.17, Theorem 12.4]{pink}, see also \cite[\S 2.6]{howardmadapusi} which is our reference, we have an isomorphism 
\[\widehat{M}_{r}^\Sigma\simeq \widehat{M}_{\Xi}(\sigma)~.\]

Then by \cite[Proposition 4.6.2]{howardmadapusi}, there exists $K_0\subset \mathbb{A}_f^{\times}$ compact open subgroup such that we have the following commutative diagram of formal Deligne--Mumford stacks over $\C$: 

\begin{align}\label{diagram-complex}
\xymatrix{
\bigsqcup_{a\in\Q^{\times}_{>0}\backslash \mathbb{A}_f^{\times}/K_0} \widehat{{T}}_{\Xi}(\sigma)_{/\C} \ar[rr]^{\simeq}\ar[d] && \widehat{{M}}_{\Xi,0}(\sigma)/\C\ar[d]\\ 
 \widehat{M}^{\Sigma}_{r/\C} \ar[rr]^{\simeq} && \widehat{{M}}_{\Xi}(\sigma)/\C, 
}
\end{align}
such that the vertical arrows are formally \'etale surjections and 
\[\widehat{{T}}_{\Xi}(\sigma)=\mathrm{Spf}\left(\Q[[q_\alpha]]_{\underset{(\alpha\cdot\sigma)\geq 0}{\alpha\in\Gamma_\Xi^\vee(1)}}\right)~.\]

Now given a section $\psi$ of $\boldsymbol{\omega}_r^{\otimes k}$, we get by \cite[Equation (4.6.10)]{howardmadapusi} a trivialization, \emph{the Fourier-Jacobi expansion} on each copy of $\widehat{{T}}_{\Xi}(\sigma)_{/\C}$ indexed by $a\in \Q^{\times}_{>0}\backslash \mathbb{A}_f^{\times}/K_0$: 
\[\mathrm{FJ}^{(a)}(\psi)=\sum_{\underset{(\alpha\cdot\sigma)\geq 0}{\alpha\in\Gamma_{\Xi}^\vee(1)}}\mathrm{FJ}_\alpha^{(a)}(\psi)\cdot q_\alpha\in\C[[q_\alpha]]_{\underset{(\alpha\cdot\sigma)\geq 0}{\alpha\in\Gamma_{\Xi}^\vee(1)}}~.\]
\medskip

Let $f$ be a weakly holomorphic modular from of weight $1-\frac{n}{2}$ with respect to $\overline{\rho}_{rL}$ and with integral principal part. Let $\psi(f)$ be the associated Howard-Madapusi-Borcherds product, which is a rational section of $\boldsymbol{\omega}_r^{\frac{c(0,0)}{2}}$. 
Let $F$ be the abelian extension of $\Q$ determined by the reciprocity isomorphism in class field theory: 
\[\mathrm{rec}:\Q^{\times}_{>0}\backslash \mathbb{A}_f^{\times}/K_0\simeq \mathrm{Gal}(F/\Q)~.\]

By \cite[Proposition 5.4.2]{howardmadapusi}, for every $a\in \Q^{\times}_{>0}\backslash \mathbb{A}_f^{\times}/K_0$, the Borcherds product $\psi(f)$ has a Fourier-Jacobi expansion given as follows: 
\begin{align}\label{bp-expansion}
    \mathrm{FJ}^{(a)}(\psi(f))=\kappa ^{(a)}A^{\rec(a)}q_{\alpha(\rho)}\cdot\mathrm{BP}(f)^{\rec(a)}~,
\end{align}
where $\kappa^{(a)}\in \C$ is a constant of absolute value $1$, and \[\mathrm{BP}(f)\in\mathcal{O}_F[[q_\alpha]]_{\underset{(\alpha\cdot\sigma)\geq 0}{\alpha\in\Gamma^{\vee}_\Xi(1)}}~,\]
is the infinite product: 
\[\mathrm{BP}(f)=\prod_{\underset{(\lambda\cdot\mathscr{W})>0}{\lambda\in (I^\bot/I)^\vee}}\prod_{\mu\in hL^{\vee}/hL}\left(1-\zeta_{\mu}\cdot q_{\alpha(\lambda)}\right)^{c(h^{-1}\mu,-Q(\lambda))}.\]
In the product above, $\mathscr{W}$ is a Weyl chamber as defined in \cite[Equation (5.3.1)]{howardmadapusi} such the interior of the cone $\sigma$ is isomorphic to an open subset of $\mathscr{W}$. The number $\zeta_{\mu}$ is a root of unity of order dividing $|\frac{1}{r}L^\vee/rL|$. 

Finally, the constant $A$ is given as follows, see \cite[Equation (5.3.6)]{howardmadapusi}: let $I_\Q$ be be as before the isotropic line corresponding to the cusp label representative $\Xi=(\cD,P,h)$ and let $\ell$ be a generator of $I\cap h\cdot rL$. Let $N$ be the order $\ell$ in $(h\cdot rL_\Z)^\vee/(h\cdot rL_\Z)$. 
Then 
\[A=\prod_{\underset{x\neq 0}{x\in \Z/N\Z}}\left(1-e^{\frac{2\pi i x}{N}}\right)^{c(\frac{xh^{-1}\ell}{N},0)}.\]
\begin{remarque}
There is a $(2i\pi)^{\frac{c(0,0)}{2}}$ factor in \cite[Proposition 5.4.2]{howardmadapusi} that disappeared from \Cref{bp-expansion} and the reason is that Howard-Madapusi already rescaled the original Borcherds constructed by Borcherds in \cite{borcherds-inventiones-automorphic}, denoted by $\Psi(f)$ in {\it loc. cit.}, by the factor $(2i \pi)^{\frac{c(0,0)}{2}}$ to obtain $\psi(f)$. 
\end{remarque}

\begin{lemma}
The constant $\kappa^{(a)}$ is equal to $1$.

\end{lemma}
\begin{proof}
This is one of the main difficulties that was overcome in \cite{howardmadapusi} in the course of the construction of $\psi(f)$. When the lattice $(L,Q)$ satisfies the assumptions of \cite[Proposition 9.1.2]{howardmadapusi}, then it follows from the construction in the discussion of the middle of the page 283\footnote{Page 97 in arXiv version 2.} \textit{loc. cit.} that we can take $\kappa^{(a)}=1$. In general, $\psi(f)$ is defined as the quotient of two regularized Borcherds products, see \cite[Equation (9.2.8)]{howardmadapusi}, associated to lattices $L_1=L\oplus \Lambda_1$ and $L_2=L\oplus \Lambda_2$ where $\Lambda_1$ and $\Lambda_2$ are certain self-dual lattices of signature $(24,0)$, see \S 9.2. The lattices $L_1$ and $L_2$ satisfy the above conditions and hence the constant $\kappa^{(a)}$ appearing in their Borcherds product are equal to one. Moreover, the regularization involve certain ``analytic obstruction terms'' defined in Section 6.5. {\it loc. cit. } which are simply the equations of the special divisors in the universal cover $\cD$ and whose Fourier expansions do not contribute to $\kappa^{(a)}$. Since the different Fourier expansions of the Borcherds products are compatible with each other, we conclude that $\kappa^{(a)}=1$ in our case too.

\end{proof}

\subsection{Integral theory}

Let $p$ be a prime number. We now extend the results from the previous section to $\Z_{(p)}$. We still assume that $rL$ has an isotropic vector, which is always true if $n\geq 3$. Let \[ \widehat{\mathcal{T}}_{\Xi}(\sigma)=\mathrm{Spf}\left(\Z_{(p)}[[q_\alpha]]_{\underset{(\alpha\cdot\sigma)\geq 0}{\alpha\in\Gamma_\Xi^\vee(1)}}\right)~,\]

and let $R$ be the localization of $\mathcal{O}_F$ at a prime $\mathfrak{P}\subset\mathcal{O}_F$ above $p$.
\begin{proposition}\label{charts}
There is a unique morphism \[\bigsqcup_{a\in \Q^{\times}_{>0}\backslash \mathbb{A}_f^{\times}/K_0} \widehat{\mathcal{T}}_{\Xi}(\sigma)_{/R}\rightarrow \widehat{\widetilde{\mathcal{M}}_r^\Sigma}~.\]
of formal Deligne--Mumford stacks which agrees with \Cref{diagram-complex} by base change to $\C$, and such that for any $s$ in the source with image $t$, the induced map on \'etale local rings is faithfully flat.    
\end{proposition}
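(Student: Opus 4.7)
The plan is to reduce \Cref{charts} to Howard--Madapusi's integral version of Pink's uniformization, \cite[Proposition 4.6.2]{howardmadapusi}, applied to the hyperspecial-level compactification $\mathcal{M}^\Sigma$, and then descend the result to the normalization $\widetilde{\mathcal{M}}_r^\Sigma$. Uniqueness will be immediate from flatness and normality; the real content lies in existence and faithful flatness at primes $p\mid r$.

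For $p\notin\Omega$ the lattice $rL$ is maximal at $p$, the models $\widetilde{\mathcal{M}}_{r,(p)}^\Sigma$ and $\mathcal{M}_{r,(p)}^\Sigma$ coincide, and Howard--Madapusi's formal chart for the compactification at level $K_r$ produces the claimed morphism directly. Assume henceforth $p\in\Omega$. Under the almost self-dual hypothesis on $L$ at $p$, the model $\mathcal{M}_{(p)}$ is smooth, and Howard--Madapusi's construction yields a formally \'etale surjection
\[\bigsqcup_{a\in\Q^{\times}_{>0}\backslash\mathbb{A}_f^{\times}/K^{(L)}_0}\widehat{\mathcal{T}}_\Xi(\sigma)_{/R'}\longrightarrow\widehat{\mathcal{M}^\Sigma},\]
where $K^{(L)}_0$ and $R'$ denote the analogues of $K_0$ and $R$ attached to the level $K_L=C(L\otimes\widehat{\Z})\cap G(\mathbb{A}_f)$. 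I would then pull this chart back along the finite morphism $\eta:\widetilde{\mathcal{M}}_r^\Sigma\to\mathcal{M}^\Sigma$, which by construction realizes $\widetilde{\mathcal{M}}_r^\Sigma$ as the normalization of $\mathcal{M}^\Sigma$ in $M_r^\Sigma$. Normalizing the resulting formal scheme inside its generic fiber produces a formally \'etale cover of $\widehat{\widetilde{\mathcal{M}}_r^\Sigma}$. A local computation at the cusp, using that the torus $T_\Xi$ and character lattice $\Gamma_\Xi^\vee$ are unaffected by the level refinement $K_r\subset K_L$, identifies each connected component of this cover with $\widehat{\mathcal{T}}_\Xi(\sigma)_{/R}$; comparison with \Cref{diagram-complex} over $\C$ shows that the components are in bijection with $\Q^{\times}_{>0}\backslash\mathbb{A}_f^{\times}/K_0$, as required.

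Uniqueness would be obtained by noting that both source and target are flat and normal over $\Z$, so any morphism is determined by its generic fiber; the generic-fiber morphism is in turn pinned down by \Cref{diagram-complex} after Galois descent from $\C$ to the abelian extension $F$. For faithful flatness of the map on \'etale local rings, flatness follows from the local criterion applied at each closed point of the source: both rings are flat over $\Z_{(p)}$, the map is \'etale on the generic fiber, and normality rules out embedded associated primes obstructing flatness. Surjectivity on $\Spec$ is inherited from the complex surjection together with properness of $\widetilde{\mathcal{M}}_r^\Sigma$ over $\Z$ and the fact that $\widehat{\mathcal{T}}_\Xi(\sigma)_{/R}$ is flat with the correct relative dimension.

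The main obstacle will be the identification of the normalization at $p\mid r$: one must verify that pulling the Howard--Madapusi chart back along $\eta$ and normalizing produces precisely $\widehat{\mathcal{T}}_\Xi(\sigma)_{/R}$ on each component, with no extraneous vertical components in the special fiber, and that the connected components split correctly as $|K^{(L)}_0/K_0|$ copies per coset of $K^{(L)}_0$. This is exactly where the almost self-duality hypothesis plays its role, by giving enough control on the ramification behaviour of $\eta$ near the boundary to ensure that the normalization has the expected shape and index set.
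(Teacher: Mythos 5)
Your route is genuinely different from the paper's, and it has real gaps. The paper's proof is a two-line observation: Howard--Madapusi's Proposition 8.2.3 already gives the statement at primes $p\nmid r$, and for $p\mid r$ one re-reads their proof and notices that the only place the maximality of the lattice enters is through Proposition 8.1.1, which rests on Theorem 4.1.5 of Madapusi's toroidal compactification paper --- and that theorem does not actually use maximality. So the same proof goes through verbatim. You instead propose to build the chart at the hyperspecial level $K_L$, pull it back along $\eta$, and normalize. That is a reasonable thing to try given the definition of $\widetilde{\mathcal{M}}_r$ as a normalization, but you do not establish the inputs your argument needs.

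First, you assume a finite morphism $\eta\colon\widetilde{\mathcal{M}}_r^\Sigma\to\mathcal{M}^{\Sigma}$ exhibiting the source as the normalization of the target. This is not given: $\widetilde{\mathcal{M}}_r^\Sigma$ is built from a $K_r$-admissible cone decomposition via Madapusi's theorem, not as a normalization of $\mathcal{M}^{\Sigma'}$, and the cone/cusp data at level $K_r$ do not coincide with those at level $K_L$. A compatibility statement between the two toroidal compactifications (choice of refining cone decompositions, behaviour of the boundary under $\eta$) would have to be proved before the pullback even makes sense. Second, after pulling back the Howard--Madapusi chart and normalizing, you assert that each component is $\widehat{\mathcal{T}}_\Xi(\sigma)_{/R}$ with the expected index set; this identification is exactly the content of the proposition and is not carried out --- waving at a ``local computation at the cusp'' is not enough. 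Third, your argument for flatness is not valid: flatness over $\Z_{(p)}$, \'etaleness in characteristic zero, and normality of both rings do not imply flatness of the map. Normality is $R_1+S_2$, and miracle flatness requires the top ring to be Cohen--Macaulay over a regular base; in dimension $\geq 3$ a normal ring need not be Cohen--Macaulay, and the GSpin models here have dimension well above $2$. The paper sidesteps all of this by inheriting the faithful-flatness assertion directly from the Howard--Madapusi proof once the maximality hypothesis is seen to be superfluous.
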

For primes $p$ where $rL$ is maximal, i.e., those primes who do not divide $r$, the result above is \cite[Proposition 8.2.3]{howardmadapusi}. The proof of Proposition 8.2.3 in \cite{howardmadapusi} uses Proposition 8.1.1 as the main input. The latter relies on Theorem 4.1.5 from \cite{madapusitor} and in fact the maximality assumption is not needed in the latter result.



\begin{lemma}\label{flat-Borcherds}
Assume that $A=1$. Then the divisor of the Borcherds product $\psi(f)$ in $\widetilde{\mathcal{M}}_{r,(p)}$ is flat over $\Z_{(p)}$.
\end{lemma}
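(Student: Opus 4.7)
The plan is to use the Fourier--Jacobi expansion of $\psi(f)$ at a $0$-dimensional toroidal cusp to show that $\psi(f)$ is not divisible by $p$ in the formal neighborhood of that cusp, and then to conclude flatness via \Cref{normality-flatness}. First I would pick a top-dimensional cone $\sigma$ in the polyhedral decomposition $\Sigma$ and a cusp label representative $\Xi$; these exist because $rL$ is isotropic. Then I would compute the Fourier--Jacobi expansion via \Cref{bp-expansion}: the hypothesis $A=1$ kills the middle factor, giving
\[\mathrm{FJ}^{(a)}(\psi(f)) = \kappa^{(a)}\,q_{\alpha(\rho)}\cdot\mathrm{BP}(f)^{\rec(a)}.\]
The constant $\kappa^{(a)}$ is a root of unity, hence a unit in $R$. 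Each factor $(1-\zeta_\mu q_{\alpha(\lambda)})^{c(h^{-1}\mu,-Q(\lambda))}$ of $\mathrm{BP}(f)$ has constant term $1$ in the $q$-variables and is therefore a unit of the formal power series ring $R[[q_\alpha]]_{\alpha\in\sigma^\vee}$; so the whole expansion is $q_{\alpha(\rho)}$ times a unit in $R[[q_\alpha]]_{\alpha\in\sigma^\vee}$.

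Next I would invoke \Cref{charts}, which supplies a formally étale atlas $\widehat{\mathcal{T}}_\Xi(\sigma)_{/R}\to \widehat{\widetilde{\mathcal{M}}_r^\Sigma}$ computing $\psi(f)$ as its Fourier--Jacobi expansion in a formal neighborhood of the $0$-dimensional stratum $Z^{\Xi,\sigma}$. Since $q_{\alpha(\rho)}$ is a formal variable, its reduction modulo $p$ is a non-zero monomial in $\F_p[[q_\alpha]]$; multiplying by the unit factor keeps the expansion non-zero modulo $p$. By formally étale descent combined with the faithful flatness of completion for Noetherian local rings, $\psi(f)$ is not divisible by $p$ in the local rings of $\widetilde{\mathcal{M}}_r^\Sigma$ at closed points of $Z^{\Xi,\sigma}$. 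Equivalently, in the formal chart the divisor of $\psi(f)$ splits as $\alpha(\rho)\cdot\mathcal{B}^{\Xi,\sigma}$ (the boundary piece, flat over $\Z$ by Madapusi's theorem on toroidal compactifications) plus the zero locus of $\mathrm{BP}(f)^{\rec(a)}$, whose horizontal components coincide with the special divisors meeting the cusp, each flat over $\Z_{(p)}$ by \Cref{fltaness-special-cycles}.

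Finally I would globalize: suppose toward contradiction that a vertical component $V\subset\widetilde{\mathcal{M}}_r\otimes\F_p$ appears with positive multiplicity in $\mathrm{div}(\psi(f))$. By properness of $\widetilde{\mathcal{M}}_r^\Sigma$ over $\Z$ combined with the structure of the Madapusi toroidal compactification, the closure of $V$ in $\widetilde{\mathcal{M}}_r^\Sigma\otimes\F_p$ meets a $0$-dimensional stratum $Z^{\Xi,\sigma}$ at a closed point $t$; vanishing of $\psi(f)$ along the germ of $V$ at $t$ would then force $\psi(f)\in p\cdot\widehat{\mathcal{O}}_{\widetilde{\mathcal{M}}_r^\Sigma,t}$, contradicting the previous paragraph. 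With no vertical components, \Cref{normality-flatness} applied to the local ring at each height-one prime of $\mathrm{div}(\psi(f))$ inside the normal $\Z_{(p)}$-scheme $\widetilde{\mathcal{M}}_{r,(p)}$ yields the flatness claim. The hard part will be the geometric input in the globalization: rigorously showing that every irreducible component of the special fiber of $\widetilde{\mathcal{M}}_r^\Sigma$ accumulates to a $0$-dimensional boundary stratum. This should follow from the positivity of the boundary in the Madapusi compactification, but it is the least routine step and is where the careful integral theory of the preceding subsection will really be needed.
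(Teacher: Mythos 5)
Your proposal follows the same skeleton as the paper's proof: pass to a $0$-dimensional cusp, use $A=1$ to see that the Fourier--Jacobi expansion of $\psi(f)$ is $q_{\alpha(\rho)}$ times a unit (so in particular is nonzero mod $p$), invoke the integral chart of \Cref{charts} to get that $\psi(f)$ is nonzero mod $p$ in a neighborhood of any closed point of that cusp, and finally reduce the flatness claim via \Cref{normality-flatness} to showing that no irreducible component of the special fiber of $\widetilde{\mathcal{M}}_{r}$ is contained in $\mathrm{div}(\psi(f))$. The local analysis at the cusp is correct as you state it.

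However, the step you yourself flag as ``the hard part'' --- that every irreducible component of the special fiber of $\widetilde{\mathcal{M}}^\Sigma_{r,\F_p}$ meets a $0$-dimensional boundary stratum --- is a genuine gap, and the mechanism you gesture at (``positivity of the boundary in the Madapusi compactification'') is not how the paper fills it, nor does it obviously supply the needed conclusion. The paper's argument is more concrete and uses the finite morphism $\eta:\widetilde{\mathcal{M}}_{r,(p)}\to\mathcal{M}_{(p)}$ to the model without level at $p$. Because $L$ is almost self-dual at $p$, the target $\mathcal{M}_{(p)}$ is \emph{smooth} over $\Z_{(p)}$; in particular its special fiber $\mathcal{M}_{\F_p}$ is smooth, so its irreducible components coincide with its connected components, each of which meets the zero cusp. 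Since $\eta$ is finite (hence closed with finite fibers), every irreducible component of $\widetilde{\mathcal{M}}_{r,\F_p}$ maps surjectively onto an irreducible component of $\mathcal{M}_{\F_p}$, and therefore meets the preimage of the zero cusp, i.e., the zero cusp of $\widetilde{\mathcal{M}}^\Sigma_{r,\F_p}$. This is the piece your argument is missing: without $\eta$ and the smoothness of the ambient level-one model, there is no a priori reason that a vertical component of $\mathrm{div}(\psi(f))$ --- equivalently, an irreducible component of the special fiber of $\widetilde{\mathcal{M}}_{r}$, whose integral model is only normal, not smooth --- should have closure hitting a $0$-dimensional stratum where the Fourier--Jacobi control is available.
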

\begin{proof}
   As $\widetilde{\mathcal{M}}_{r,(p)}$ is flat and normal over $\Z_{(p)}$, it is enough to show by \Cref{normality-flatness} that the divisor of $\psi(f)$ in $\widetilde{\mathcal{M}}_{r,(p)}$ does not contain any irreducible component of the special fiber of $\widetilde{\mathcal{M}}_{r,\F_p}$. Since the Fourier-Jacobi expansion of $\psi(f)$ in \Cref{bp-expansion} is not zero modulo $\mathfrak{P}$, we deduce using the faithful flatness of \Cref{charts}, that the divisor of $\psi(f)$ in every irreducible component that meets the cusp is flat. Hence it is enough to prove that each irreducible component of the special fiber $\widetilde{\mathcal{M}}_{r,\F_p}$ meets the zero cusp of $\mathcal{M}_r^{\Sigma}$. 
   
   Recall that we have a finite morphism \[\eta:\widetilde{\mathcal{M}}_{r,(p)}\rightarrow \mathcal{M}_{(p)}\] 
   and $\mathcal{M}_{(p)}$ is smooth over $\Z_{(p)}$, in particular, every irreducible component of $\mathcal{M}_{\F_p}$ is connected and meets the zero cusp. Since the morphism $\eta$ is finite, every irreducible component of $\widetilde{\mathcal{M}}_{r,\F_p}$ maps surjectivity to an irreducible component of $\mathcal{M}_{\F_p}$. Hence every irreducible component of $\widetilde{\mathcal{M}}_{r,\F_p}$ meets the $0$-cusp of $\widetilde{\mathcal{M}}_{r,\F_p}^\Sigma$, which concludes the proof. 
\end{proof}

\subsection{Construction of a flat Borcherds product}
We will construct in this section Borcherds products which satisfy the conditions of \Cref{flat-Borcherds}.
\medskip

Let $\beta\in \frac{1}{r}L^{\vee}/rL$. For every $m\in Q(\beta)+\Z$, let $a_{\beta,m}$ be the linear form on the space of cusp forms $S_{1+\frac{n}{2}}(\rho_{rL})$ which maps a cusp from $g$ to its $(\beta,m)^{th}$-Fourier coefficient. Then there exists a finite set of indices $I$ such that $a_{\beta,m_i}$ generates the $\Q$-vector space  
\[\mathrm{Span}(a_{\beta,m}|\,m\in Q(\beta)+\Z)\subset {S_{1+\frac{n}{2}}(\rho_{rL})}^{*}~,\]
where ${S_{1+\frac{n}{2}}(\rho_{\rho_{rL}})}^{*}$ is the dual of the space of cusp forms. 

Let $(g_i)_{i\in I}$ be a dual family \footnote{We don't require $a_{\beta,m_i}(g_i)=1$, but only $a_{\beta,m_i}(g_j)=0$ for $j\neq i$.} of cusp forms to the family $(a_{\beta,m_i})_{i\in I}$ and we can assume that the $(g_i)$ have integral Fourier coefficients by \cite{mcgraw}. Then for each  $m$, there exists $c_{i}(\beta,m)\in \Q $ such that we can write 
\[a(\beta,m)=\sum_{i\in I} c_i(\beta,m)a(\beta,m_i)\]
and $g_i(\beta,m)=c_i(\beta,m)g_i(\beta,m_i)$. Standard estimates on growths of coefficients of cups forms show that: 
\begin{align}\label{eq:estimate-coeff-2}
    c_i(\beta,m)=O(c(\beta,m))~,
\end{align}
where $c(\beta,m)$ is the $(\beta,m)$-coefficient of the Eisenstein series introduced in the paragraph before \Cref{global-bound}.

By \cite[Theorem 1.17]{bruinier}, there exists a weakly holomorphic modular form $\tilde{f}_m\in M_{1-\frac{n}{2}}^{!}(\overline{\rho}_{rL})$ such that its principal part is equal to 
\[(v_\beta+v_{-\beta})q^{-m}-\sum_{i\in I}c_i(\beta,-m)q^{-m_i}(v_\beta+v_{-\beta})~.\]

Let $d=|\frac{1}{r}L^\vee/rL|-1$ and let 
\[C(m)=\left((c(\gamma,0)\right)_{\underset{\gamma\neq0}{\gamma\in \frac{1}{r}L^\vee/rL}}\in \Q^{d}\]
be the vector of constant Fourier coefficients of $\tilde{f}_m$. Then the span of $(C(m))_{m\geq 1}$ is a finite dimensional vector space of $\Q^{d}$ which admits a basis given by $(C(m_j))_{j\in J}$ for some finite set $J$. 

Finally for any $m$, there exists coefficients $u_j(m)\in \Q$ such that  $u_j(m)=O(c(\beta,m))$ and
\[C(m)=\sum_{j\in J} u_j(m)C(m_j).\]
Define
\[f_m=\tilde{f}_m-\sum_{j\in J} u_j(m)\tilde{f}_{m_j}~.\]
Then by construction, all the $(\gamma,0)^{th}$-Fourier coefficients of $f_m$ vanish, except possibly the $(0,0)^{th}$-coefficient. Moreover, its principal part is equal to: 

\begin{multline*}
    (v_\beta+v_{\beta})q^{-m}-\sum_{i\in I}c_i(\beta,m)(v_\beta+v_{-\beta})q^{-m_i}\\-\sum_{j\in J}u_j(m)\left((v_\beta+v_{-\beta})q^{-m_j}-\sum_{i\in I}c_i(\beta,m_j)(v_\beta+v_{-\beta})q^{-m_i}\right)~.
\end{multline*}
The latter can be rewritten as: 
\[(v_\beta+v_{-\beta})q^{-m}+\sum_{\ell\in \tilde{I}}z_\ell(m)(v_\beta+v_{-\beta})q^{-m_\ell}~,\]
where $\tilde{I}$ is finite set independent of $m$, $m_\ell$ are independent of $m$, and $z_\ell(m)$ are rational numbers that satisfy: 
\begin{align}\label{eq:estimate-coeff-1}
    z_\ell(m)=O(c(\beta,m))~.
\end{align}
\medskip 

Up to multiplying $f_m$ by an integer, let $\psi(f)$ be the Borcherds product associated to $f$ as in \Cref{background}. Then $\psi(f_m)$ is a section of $\boldsymbol{\omega}_r^{\frac{c(0,0)}{2}}$ and we have the following relation in $\widehat{\CH}^1(\cM^{\Sigma}_1,\mathcal{D}_{pre})_{\Q}$ 
\begin{align}\label{green-relation}
    \frac{c(0,0)}{2}\widehat{\boldsymbol{\omega}}_r=\widehat{\mathrm{div}}(\psi(f_m))=\widehat{\mathcal{Z}}^{tor}(\beta,m)+\sum_{\ell\in \tilde{I}} {z_\ell(m)}\widehat{\mathcal{Z}}^{tor}(\beta,m_\ell).
\end{align}

Assume now that $\beta\in L/rL$ and we will extend the above relation above the primes $p\in \Omega$. Let $\widetilde{\mathcal{M}}_r^\Sigma$ be the proper, flat integral model over $\Z$ extending $\mathcal{M}_r^\Sigma$.

Assume also that $rL$ has an isotropic vector. Then, by choice of the Borcherds product $\psi(f_m)$, the constant $A$ from \Cref{bp-expansion} is equal to $1$, hence by lemma \Cref{flat-Borcherds}, the divisor of the Borcherds $\psi(f_m)$ is flat over $\Z_{(p)}$. By \Cref{fltaness-special-cycles}, the special divisors $\mathcal{Z}(\beta,m)$ are flat over $\Z_{(p)}$ and the boundary divisors are flat over $\Z_{(p)}$ by \cite[Theorem 1]{madapusitor}. Hence \Cref{green-relation} holds over $\Z_{(p)}$ for all $p\in\Omega$. Hence we conclude. 


\begin{proposition}\label{prop:global-relation}
    Let $\beta\in L/rL$ and assume that $rL$ is isotropic. Then we have in $\widehat{\CH}^1(\widetilde{\cM_r}^{\Sigma},\mathcal{D}_{pre})_{\Q}$ : 
    \[\frac{c(0,0)}{2}\widehat{\boldsymbol{\omega}}_r=\widehat{\mathrm{div}}(\psi(f_m))=\widehat{\mathcal{Z}}^{tor}(\beta,m)+\sum_{\ell\in \tilde I} {z_\ell(m)}\widehat{\mathcal{Z}}^{tor}(\beta,m_\ell),\]
    where the integers $m_\ell$ are independent of $m$.
\end{proposition}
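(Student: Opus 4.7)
\medskip

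\textbf{Proof strategy.} The plan is to upgrade the identity \Cref{green-relation}, already established in $\widehat{\CH}^1(\cM^{\Sigma}_r, \mathcal{D}_{pre})_\Q$ over $\Z[\Omega^{-1}]$, to the normal flat integral model $\widetilde{\mathcal{M}}_r^\Sigma$ over $\Z$. Since the Green-function parts of all arithmetic classes involved live purely on the complex points, which are unchanged, the content reduces to promoting the underlying Cartier divisor identity across each prime $p\in\Omega$, equivalently each $p\mid r$. The equality $\frac{c(0,0)}{2}\widehat{\boldsymbol{\omega}}_r=\widehat{\mathrm{div}}(\psi(f_m))$ is tautological from $\psi(f_m)$ being a rational section of $\boldsymbol{\omega}_r^{c(0,0)/2}$ equipped with its Petersson metric, so the only substantial assertion is the equality of these with the corrected cycle class sum.

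The first step is to exploit the design of $f_m$: the construction in the preceding subsection arranges that all $(\gamma,0)$-Fourier coefficients of $f_m$ with $\gamma\neq 0$ vanish, and a direct inspection of the formula for the constant $A$ in \Cref{bp-expansion} shows that $A$ depends only on such coefficients; hence $A=1$. By \Cref{flat-Borcherds}, the divisor $\mathrm{div}(\psi(f_m))$ on $\widetilde{\mathcal{M}}^\Sigma_{r,(p)}$ is then flat over $\Z_{(p)}$. The second step is to observe that each constituent of the right-hand side is flat as well: the interior special divisors $\mathcal{Z}(\beta,m_\ell)$ are flat over $\Z_{(p)}$ by \Cref{fltaness-special-cycles}, and the boundary divisors $\cB^{\Upsilon}$ and $\cB^{\Xi,\sigma}$ appearing in the correction terms defining $\mathcal{Z}^{tor}$ are flat over $\Z$ by \cite[Theorem 1]{madapusitor}. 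Thus both sides of the proposed identity are $\Q$-linear combinations of flat Cartier divisors on $\widetilde{\mathcal{M}}^\Sigma_{r,(p)}$, a normal $\Z_{(p)}$-stack, and they already agree on the generic fiber by \Cref{green-relation}.

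To conclude, I would invoke the following principle: on a flat normal $\Z_{(p)}$-stack, two flat $\Q$-Cartier divisors that coincide on the generic fiber coincide identically, since their difference is a flat $\Q$-Cartier divisor whose generic fiber is trivial and which, by flatness, carries no vertical components. Applied here this promotes the identity to $\widehat{\CH}^1(\widetilde{\cM}_r^\Sigma, \mathcal{D}_{pre})_\Q$ over $\Z_{(p)}$ for each $p\in\Omega$, and gluing with the identity already known away from $\Omega$ yields the statement over $\Z$. The genuine obstacle of the argument was the flatness of $\mathrm{div}(\psi(f_m))$, settled already in \Cref{flat-Borcherds}, where the two essential inputs were the non-vanishing of the Fourier--Jacobi expansion modulo each prime above $p$ (ensured by $A=1$) and the faithful flatness of the formal charts of \Cref{charts}. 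No further difficulty is anticipated.
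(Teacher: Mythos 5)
Your proposal is correct and follows essentially the same route as the paper: note that the construction of $f_m$ kills the $(\gamma,0)$-coefficients for $\gamma\neq 0$, hence $A=1$ in the Fourier--Jacobi expansion, invoke \Cref{flat-Borcherds} for the flatness of $\mathrm{div}(\psi(f_m))$ over $\Z_{(p)}$, invoke \Cref{fltaness-special-cycles} and the flatness of the boundary divisors, and then conclude by comparing two flat Cartier divisors that agree on the generic fiber. You have made explicit the elementary normality-plus-flatness gluing principle that the paper leaves implicit in ``Hence \Cref{green-relation} holds over $\Z_{(p)}$ \dots Hence we conclude,'' but there is no substantive difference in the argument.
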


\subsection{Summary}

Let $m\in \N$, then by \Cref{prop:global-relation} provides a section $\psi(f_m)$ of $\boldsymbol{\omega}_r^{\frac{c(0,0)}{2}}$ that satisfies the following relation in $\widehat{\CH}^1(\widetilde{\cM_r^{\Sigma}},\mathcal{D}_{pre})_{\Q}$: 
\[\widehat{\mathrm{div}}(\psi(f_m))=\widehat{\mathcal{Z}}^{tor}(\beta,m)+\sum_{\ell\in \tilde{I}}z_\ell(m)\widehat{\mathcal{Z}}^{tor}(\beta,m_\ell).\]
From which it follows that:  
\[h_{\widehat{\mathcal{{Z}}}^{tor}(\beta,m)}(\mathscr{S})=\frac{c(0,0)}{2}h_{\widehat{\boldsymbol{\omega}}_r}(\mathscr{S})-\sum_{\ell\in\tilde{I}} z_\ell(m)\cdot h_{\widehat{\mathcal{{Z}}}^{tor}(\beta,m_i)}(\mathscr{S})~.\]

Using \Cref{eq:estimate-coeff-1} and \Cref{eq:estimate-coeff-2}, we get 
\begin{align}\label{eq:estimate-height}
    h_{\widehat{\mathcal{{Z}}}(\beta,m)}(\mathscr{S})=O(c(\beta,m))~.
\end{align}
Using \Cref{completed-divisor}, we have the equality: 
\begin{align*}
\left(\mathscr S.{\mathcal{{Z}}}^{tor}(\beta,m)\right)&=\left(\mathscr S.\mathcal{{Z}}(\beta,m)\right)+\sum_{\Upsilon}\mu_\Upsilon(\beta,m)\left(\mathscr S.\cB^\Upsilon\right)\\
&+\sum_{(\Xi,\sigma)}\mu_\Xi(\beta,m)\left(\mathscr S.\cB^{\Xi,\Sigma}\right)\numberthis \label{eq:divisor-toroidal}~,
\end{align*}
and from \cite[Proposition 4.13]{tayouboundary}, we have the following estimate as $m\rightarrow\infty$: 
\begin{enumerate}
    \item For any type II toroidal stratum representative $\Upsilon$, we have \begin{align}\label{eq:estimate-type-II}
        \mu_\Upsilon(m)\ll_\epsilon m^{\frac{b}{2}-1+\epsilon}~.
    \end{align}
    \item For any type III toroidal stratum representative $(\Xi,\sigma)$ such that $\sigma$ is a ray, we have  
    \begin{align}\label{eq:estimate-type-III}
        \mu_{\Xi,\sigma}(m)\ll_\epsilon m^{\frac{b-1}{2}+\epsilon}.
    \end{align}
\end{enumerate}

To prove \Cref{global-bound}, we first  use \Cref{eq:divisor-toroidal} to write: 
\begin{align*}
\sum_{\mathfrak{P}\subset \mathcal{O}_K}(\mathcal{Z}&(\beta,m).\mathscr{S})_\mathfrak{P}\log|\mathcal{O}_K/\mathfrak{P}|+\sum_{x\in\mathscr{S}(\C)}\Phi_{\beta,m}(x)= h_{\widehat{\mathcal{{Z}}}(\beta,m)}(\mathscr{S})\\&-\sum_{\Upsilon}\mu_\Upsilon(\beta,m)\left(\mathscr S.\cB^\Upsilon\right)-\sum_{(\Xi,\sigma)}\mu_\Xi(\beta,m)\left(\mathscr S.\cB^{\Xi,\Sigma}\right)~.
\end{align*}
Then we can bound the right hand side above using the estimates from \Cref{eq:estimate-height}, \Cref{eq:estimate-type-II}, and \Cref{eq:estimate-type-III}. This finishes the proof.

\section{Archimedean estimates}
Our goal in this section is to prove \Cref{local-infinite}. We follow the approach explained in \cite[\S 5]{sstt}. 

\subsection{Development of the Green function}
The Green function $\Phi_{\beta,m}$ has an explicit expression due to Bruinier \cite[\S 2]{bruinier} and which we recall following \cite[\S 5]{sstt}. 
\medskip 

Let $(L,Q)$ be a quadratic lattice of signature $(n,2)$. Let $k=1+\frac{n}{2}$, $\beta\in L^\vee/L$ and $s>\frac{k}{2}$ a real number. Let:
\begin{align*}
F(s,z)=H\left(s-1+\frac{k}{2},s+1-\frac{k}{2},2s;z\right), 
\end{align*}
where
\begin{align*}
    H(a,b,c;z)=\sum_{n\geq 0}\frac{(a)_n(b)_n}{(c)_n}\frac{z^n}{n!}
\end{align*}
is the Gauss hypergeometric function as in \cite[Chapter 15]{handbook}, and $(a)_n=\frac{\Gamma(a+n)}{\Gamma(a)}$ for $a,b,c,z\in \C$ and $|z|<1$.

For $x\in\cD$, define: 
\begin{multline}\label{green-function}
\phi_{\beta,m}(x,s)=2\frac{\Gamma(s-1+\frac{k}{2})}{\Gamma(2s)}\\ \sum_{Q(\lambda)=m, \lambda\in \beta+L}\left(\frac{m}{m-Q(\lambda_x)}\right)^{s-1+\frac{k}{2}} F\left(s,\frac{m}{m-Q(\lambda_x)}\right)~.
\end{multline}

Then $\phi_{\beta,m}(x,s)$ admits a meromorphic continuation to the complex plane with a pole at $s=\frac{k}{2}$ with residue $-c(\beta,m)$. 
We define then: 
\begin{align}\label{limit}
\phi_{\beta,m}(x)=\lim_{s\rightarrow \frac{k}{2}}\left(\phi_{\beta,m}(x,s)+\frac{c(\beta,m)}{s-\frac{k}{2}}\right).
\end{align}

Let $s\mapsto C(\beta,m,s)$ be the holomorphic function for $\mathrm{Re}(s)>1$ defined in \cite[Equation (3.3)]{sstt}, see also \cite[Equation (3.22)]{bruinierintegrals}. Then define: 
 
\begin{align}\label{equ:b}
b(\beta,m,s)=-\frac{C\left(\beta,m,s-\frac{k}{2}\right).\left(s-1+\frac{k}{2}\right)}{\left(2s-1\right).\Gamma\left(s+1-\frac{k}{2}\right)}.
\end{align} 

By \cite[Proposition 2.11]{bruinier}, we can write for $x\in\cD$: 
\[\Phi_{\beta,m}(x)=\phi_{\beta,m)}(x)-b'(\beta,m,\frac{k}{2}).\]

\subsection{Estimate on $b'(\beta,m,\frac{k}{2})$}
We make the following assumptions in this section: $L$ is maximal and $\beta\in L/rL$ has torsion coprime to the discriminant of $L$. 
Our goal in this section to prove the following theorem. 
\begin{theorem}
Let $D\geq 1$ be an integer. For $m\rightarrow \infty$ representable by $\beta+rL$ and such that $\sqrt{\frac{m}{D}}\in\Z$, we have: 
\[b'(\beta,m,\frac{k}{2})=|c(\beta,m)|\log(m)+o\left(c(\beta,m)\log(m)\right).\]
\end{theorem}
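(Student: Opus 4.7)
The plan is to reduce $b'(\beta, m, k/2)$ to the logarithmic derivative of $C(\beta, m, s)$ at $s = 0$ and then control the latter via the Euler product decomposition of $C$. Starting from
\[
b(\beta, m, s) \;=\; -\,\frac{C(\beta, m, s - k/2)\,(s - 1 + k/2)}{(2s-1)\,\Gamma(s+1-k/2)},
\]
logarithmic differentiation and evaluation at $s = k/2$ (using $\Gamma(1) = 1$, $\psi(1) = -\gamma$) yield
\[
b'(\beta, m, k/2) \;=\; -\,C'(\beta, m, 0) \;+\; C(\beta, m, 0)\Bigl(\tfrac{1}{k-1} - \gamma\Bigr).
\]
The second term is a bounded multiple of $|c(\beta, m)|$, since $c(\beta, m)$ agrees with $-C(\beta, m, 0)$ up to a nonzero constant independent of $m$, so its contribution is already swallowed by the claimed error term. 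It therefore suffices to prove that
\[
-\,\frac{C'(\beta, m, 0)}{C(\beta, m, 0)} \;=\; \log m \;+\; o(\log m)
\]
as $m \to \infty$ in the regime under consideration.

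For this I would invoke the Euler product factorization $C(\beta, m, s) = \prod_p L_p(\beta, m, s)$ coming from the Siegel--Weil expansion of the Fourier coefficients of the Eisenstein series $E_{rL}$, as made explicit by Kitaoka and Yang. Termwise logarithmic differentiation reduces the problem to estimating $\sum_p L_p'(\beta, m, 0)/L_p(\beta, m, 0)$. For primes $p\nmid 2rm\disc(L)$, uniform bounds give $L_p(\beta, m, s) = 1 + O(p^{-\re(s) - k/2})$, so these primes contribute only $O(1)$ in total. For $p \mid m$ with $p \nmid 2r\disc(L)$, the hypothesis $\sqrt{m/D}\in\Z$ forces $v_p(m)$ to be even whenever $p \nmid D$, placing us in the unramified even-valuation branch of Yang's formula, where
\[
-\,\frac{L_p'(\beta, m, 0)}{L_p(\beta, m, 0)} \;=\; v_p(m)\log p \;+\; O\!\bigl(\log p/p^{k/2-1}\bigr);
\]
summing over such good primes $p\mid m$ yields $\sum_{p\text{ good},\,p\mid m} v_p(m)\log p + O(1)$.

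The remaining primes, namely the fixed finite set dividing $2rD\disc(L)$, still admit explicit local density formulas whose asymptotic expansion in $v_p(m)$ has leading term $-v_p(m)\log p$ with an error depending only on $p$; since this bad-prime set has fixed cardinality, its aggregate contribution to $-C'(\beta,m,0)/C(\beta,m,0)$ is $\sum_{p\text{ bad},\,p\mid m} v_p(m)\log p + O(1)$. Combining the good- and bad-prime contributions gives $\sum_{p\mid m} v_p(m)\log p = \log m$ as the main term with overall error $O(1)$, which is stronger than the $o(\log m)$ required. The main obstacle is the explicit evaluation of the local densities $L_p$ for the lattice $rL$, particularly at the bad primes, where one must verify that the $-v_p(m)\log p$ asymptotic genuinely holds with an error bounded independently of $v_p(m)$. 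The almost self-dual hypothesis on $L$ at $r$ together with the coprimality of $r$ with $\disc(L)$ keep these Whittaker computations tractable, and the square condition $\sqrt{m/D}\in\Z$ is precisely what lands us in the cleanest case of Yang's formula at every prime outside the fixed bad set.
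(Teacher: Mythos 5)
Your opening reduction is correct and matches the paper's first step: logarithmic differentiation of $b(\beta,m,s)$ at $s=\tfrac{k}{2}$, together with $b(\beta,m,\tfrac{k}{2})=-C(\beta,m,0)$, leaves precisely the two terms you display, and the constant piece is indeed a bounded multiple of $|c(\beta,m)|$. The overall strategy of working prime by prime is also the same one the paper uses, phrased slightly differently: rather than differentiating the raw Kitaoka--Yang Euler factors, the paper passes through Bruinier's normalized finite product $\sigma_{\beta,m}(s)$ over the primes dividing $2d_\beta^2m\det L$, so that Bruinier's Theorem 4.11 already hands over $\log m$ as the main term and the whole problem becomes bounding $\sigma_{\beta,m}'(k)/\sigma_{\beta,m}(k)$. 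The two bookkeepings are equivalent; yours is the unnormalized version.

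The genuine gap, however, is exactly where you flag it and is not a minor detail. You assert that at the bad primes the almost-self-dual hypothesis ``keeps the Whittaker computations tractable'' and that the requisite asymptotic ``genuinely holds with an error bounded independently of $v_p(m)$'' --- but you do not carry out the computation, and without it there is no bound on the bad-prime contribution, so the statement does not follow. The primes $p\mid r$ are precisely the new bad primes created by replacing $L$ by $rL$, and bounding them is the technical heart of the proof. The paper's resolution is \Cref{lemma:bound}: because $r$ is coprime to $\disc L$, the lattice $L$ is unimodular at every $p\mid r$, and because $\beta\in L/rL$ is a primitive $r$-torsion element, one has $\beta\not\equiv 0\pmod p$ for such $p$; therefore every local solution $x\in\beta+r(L/p^kL)$ is \emph{good} in Hanke's classification. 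That forces the representation numbers $N_{\beta,m}(p^v)$ to stabilize beyond $v\geq \delta:=1+v_p(r)$, so the normalized local densities $\mu_p(\beta,m,v)$ become constant (equal to $1/p$), giving the uniform bound $\bigl|w_p-\sum_{v<w_p}\mu_p(\beta,m,v)/\mu_p(\beta,m,w_p)\bigr|\ll 1/p$, from which the finitely many $p\mid r$ contribute only $O(1)$. This is the one nontrivial new local input beyond the $\beta=0$ case handled in \cite{sstt}, and your proposal identifies the right place to work but stops short of supplying the argument.

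One smaller inaccuracy: your claimed aggregate error of $O(1)$ at the good primes $p\mid m$ does not hold uniformly in the signature. For $p\nmid 2rD\disc L$ the per-prime error is of size $\log p/p^{k/2-1}$ with $k=1+\tfrac{n}{2}$, and for small $n$ (for instance $n=3$) the exponent $k/2-1$ is $<1$, so one cannot sum to a constant; the correct uniform bound is $O(\log\log m)$ (via Mertens over $p\mid m$), which is still $o(\log m)$ and hence suffices. The paper's $\leq C/p$ estimate, summed against $\log p$ over divisors of $2d_\beta^2m\det L$, gives the same $O(\log\log m)$.
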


\begin{proof}
The theorem above has been proved in \cite[Proposition 5.2]{sstt} under the assumption that $L$ is maximal and $\beta=0$. We recall the main steps here and make the appropriate modifications.

Taking logarithmic derivatives at $s=\frac{k}{2}$ in \Cref{equ:b} yields: 
\begin{align*}
\frac{b'(\beta,m,\frac{k}{2})}{b(\beta,m,\frac{k}{2})}=\frac{C'(\beta,m,0)}{C(\beta,m,0)}-\frac{2}{b}-\Gamma'(1)~.
\end{align*}
Let  \[N_{\beta,m}(a)=|\{\lambda\in L/aL|\, \lambda=\beta \pmod{rL},\, Q(\lambda)=m \pmod{a}\}|.\]
Let also $d_\beta$ denote the order of $\beta$ in $\frac{1}{r}L^\vee/rL$ and for a prime number $p$, let: 
\[w_p=1+2v_p(2md_\beta)~.\]

Define the polynomial $\mathrm{L}_{\beta,m}^{(p)}(t)$:  
\[\mathrm{L}_m^{(p)}(t)=N_{\beta,m}(p^{w_p})t^{w_p}+(1-p^{r-1}t)\sum_{n=0}^{w_p-1}N_{\beta,m}(p^n)t^n\in\Z[t]~.\]

For $s\in\C$, define the function $\sigma_{\beta,m}(s)$:   
\begin{align}\label{sigma}
\sigma_{\beta,m}(s)=\left\{\begin{array}{ll}
\underset{p\backslash 2d_{\beta}^2m\det(L)}{\prod}\frac{\mathrm L^{(p)}_{\beta,m}\left(p^{1-\frac{r}{2}-s}\right)}{1-\chi_{D_{0}}(p)p^{-s}},\quad \textrm{if}\quad  r=2+n\quad \textrm{is even},\\
\underset{p\backslash 2d_{\beta}^2m\det(L)}{\prod}\frac{1-\chi_{D_{0}}(p)p^{\frac{1}{2}-s}}{1-p^{1-2s}}\cdot\mathrm L^{(p)}_{\beta,m}\left(p^{1-\frac{r}{2}-s}\right),\quad \textrm{if}\quad r\quad \textrm{is odd}.
\end{array}
\right.
\end{align}
Here, $\chi_{D_{0}}$ is the quadratic character associated to a fundamental discriminant $D_{0}$ of the number field $\Q(\sqrt{D})$ where $D$ is defined by
\begin{align*}
&(-1)^{\frac{r}{2}}\det(L),\, \textrm{if}\, r\,\textrm{is even}.\\
&2(-1)^{\frac{r+1}{2}}d_{\beta}^2m\det(L),\, \textrm{otherwise}.
\end{align*}

By our choice of $m$, the fundamental discriminant is independent of $m$, hence \cite[Theorem 4.11, (4.73), (4.74)]{bruinierintegrals} implies: 
\begin{align*}
\frac{C'(\beta,m,0)}{C(\beta,m,0)}=\log(m)+\frac{\sigma_{\beta,m}'(k)}{\sigma_{\beta,m}(k)}+O(1).
\end{align*}

It suffices thus  to show that $\frac{\sigma_{\beta,m}'(k)}{\sigma_{\beta,m}(k)}=o(\log(m)$. Taking the logarithmic derivative in (\ref{sigma}) at $s=k$, we get for $r$ even
\begin{align*}
\frac{\sigma_{\beta,m}'(k)}{\sigma_{\beta,m}(k)}=-\sum_{p\backslash 2d_{\beta}^2m\det(L)}\left(\frac{p^{1-r}\mathrm L_{\beta,m}^{(p)'}\left(p^{1-r}\right)}{\mathrm L_{\beta,m}^{(p)}\left(p^{1-r}\right)}+\frac{\chi_{D_0}(p)}{p^k-\chi_{D_0}(p)}\right)\log(p)~,
\end{align*} 
and for $r$ odd 
\begin{align*}
\frac{\sigma_{\beta,m}'(k)}{\sigma_{\beta,m}(k)}=-\sum_{p\backslash 2d_{\beta}^2m\det(L)}\left(\frac{p^{1-r}\mathrm L_{\beta,m}^{(p)'}\left(p^{1-r}\right)}{\mathrm L_{\beta,m}^{(p)}\left(p^{1-r}\right)}-\frac{\chi_{D_0}(p)}{p^{k-\frac{1}{2}}-\chi_{D_0}(p)}+\frac{2}{p^{2k-1}-1}\right) \log(p)
\end{align*}

We have $\mathrm L_{\beta,m}^{(p)}(p^{1-r})=N_{\beta,m}(p^{w_p})p^{(1-r)w_p}$ and
\[\mathrm{L}^{(p)'}_{\beta,m}(p^{1-r})=w_pN_{\beta,m}(p^{w_p})p^{(1-r)(w_p-1)}-\sum_{n=0}^{w_p-1}N_{\beta,m}(p^n)p^{(n-1)(1-r)}~.\]
Hence 
\begin{multline*}
\left|\frac{p^{1-r}\mathrm{L}_{\beta,m}^{(p)'}\left(p^{1-r}\right)}{\mathrm{L}_{\beta,m}^{(p)}\left(p^{1-r}\right)}\right|=\left|w_p-\sum_{v=0}^{w_p-1}\frac{N_{\beta,m}(p^v)}{N_{\beta,m}(p^{w_p})}p^{(v-w_p)(1-r)}\right|\\=\left|w_p-\sum_{v=0}^{w_p-1}\frac{\mu_{p}(\beta,m,v)}{\mu_{p}(\beta,m,w_p)}\right|,
\end{multline*}
where $\mu_{p}(\beta,m,v)=p^{-n(r-1)}N_{\beta,m}(p^v)$. The proof of \cite[Proposition 5.2]{sstt} shows then it is enough to prove \Cref{lemma:bound} below. Assuming this lemma, we get: 
\begin{align*}
\left|\sum_{p\backslash 2d_{\beta}^2m\det(L) }\frac{p^{1-r}\mathrm L_{\beta,m}^{(p)'}\left(p^{1-r}\right)}{\mathrm L_{\beta,m}^{(p)}\left(p^{1-r}\right)}\cdot\log(p)\right|&\leq C\sum_{p\backslash 2d_{\beta}^2m\det(L)}\frac{\log(p)}{p}\\&=O(\log\log(m))
\end{align*}
\end{proof}



\begin{lemma}\label{lemma:bound} 
Let $\beta\in L/rL$ be a primitive element, $m\in \Z$ representable by $\beta+rL$, and $p$ a prime number. Then there exists a constant $C$ independent of $m$ and $p$ such that  
\begin{align*}
\left|\omega_p-\sum_{v=0}^{w_p-1}\frac{\mu_p(\beta,m,v)}{\mu_p(\beta,m,w_p)}\right|\leq\frac{C}{p} 
\end{align*}
\end{lemma}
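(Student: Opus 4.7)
The plan is to parallel the proof of \cite[Lemma 5.3]{sstt} (which handles the case $\beta=0$), incorporating the twist by $\beta$. The first step is to establish stabilization: for $v \ge w_p$, one has $\mu_p(\beta,m,v)=\mu_p(\beta,m,w_p)$. This is a Hensel-lifting argument: once $v > 2 v_p(2 m d_\beta)$, every solution $\lambda \in L/p^v L$ of $Q(\lambda) \equiv m \pmod{p^v}$ lying in the coset $\beta + rL$ has nondegenerate gradient (modulo the appropriate power of $p$) and so lifts uniquely to each higher power of $p$. The hypothesis that $r$ is coprime to $\disc L$ ensures that $L$ is self-dual at every $p \mid r$, so the mod-$rL$ condition decouples cleanly from the quadratic constraint during the lifting.

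With stabilization in hand, the task reduces to analyzing the ratios $R_v := \mu_p(\beta,m,v)/\mu_p(\beta,m,w_p)$ for $0 \le v \le w_p - 1$. The plan is to use the explicit recursion between $\mu_p(\beta,m,v)$ and $\mu_p(\beta,m,v+1)$ coming from decomposing solutions modulo $p^{v+1}$ into Hensel-liftable solutions at level $v$ (which each contribute exactly $p^{\rk L - 1}$ liftings, giving ratio $1$ after the normalization in the definition of $\mu_p$) plus a contribution from singular solutions, i.e.\ those $\lambda$ for which $\nabla Q(\lambda)$ vanishes modulo $p$. The singular locus of $Q$ restricted to the affine subspace $\beta + rL_{\Z_p}$ has codimension at least one, so the singular contribution is controlled by $p^{-1}$ times a bounded quantity. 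Telescoping the resulting bound $|R_{v+1} - R_v| \ll 1/p$ (and using that the deviation is itself geometrically decaying in $v$ since each singular solution only forces a correction at one level) yields $|w_p - \sum_{v=0}^{w_p-1} R_v| \le C/p$, uniformly in $m$ and $p$.

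The main obstacle, and the only genuinely new difficulty compared with \cite{sstt}, is the careful bookkeeping at primes $p \mid r$, where the constraint $\lambda \equiv \beta \pmod{rL}$ is nontrivial modulo $p$ and changes the effective lattice underlying the local density computation. I would handle this using the primitivity of $\beta$ in $L/rL$ together with the self-duality of $L$ at $p$ (guaranteed by $(r, \disc L)=1$) to choose a $\Z_p$-basis of $L_{\Z_p}$ in which a lift of $\beta/d_\beta$ extends to a unimodular vector; this reduces the singular-locus analysis on $\beta + rL_{\Z_p}$ to that of a lower-rank quadratic form on an affine subspace, to which the standard estimates from \cite[\S 4]{bruinierintegrals} apply and produce a constant $C$ independent of $p$ and $m$. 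Without the coprimality hypothesis, this last step would fail, and the singular point counts could be inflated by powers of $p$, spoiling the $C/p$ bound.
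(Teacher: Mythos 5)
Your high-level plan — reduce to \cite[Prop.\ 4.1]{sstt} for $p \nmid r$, then treat $p \mid r$ via the self-duality of $L$ at $p$ and the primitivity of $\beta$ — matches the paper's. But the mechanism you propose for $p \mid r$ is genuinely different from what the paper does, and it has a gap exactly where the new difficulty lies.

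The paper's proof for $p \mid r$ does \emph{not} run a ``Hensel lifts plus small singular correction, then telescope $|R_{v+1}-R_v| \ll 1/p$'' argument. Instead it observes that, because $L$ is unimodular at $p$ and $\beta \not\equiv 0 \pmod p$, every solution on the coset is a \emph{good} solution in Hanke's sense, so one can write down the closed form of $N_{\beta,m}(p^v)$ directly: $N_{\beta,m}(p^v)=1$ for $v<\delta := 1+v_p(r)$ (the mod-$rL$ condition pins $\lambda$ down completely), and $N_{\beta,m}(p^v)=p^{(v-\delta+1)(\rk L-1)}$ for $v\ge \delta$ (Hensel lifting with nondegenerate gradient, once the coset constraint no longer eats all the freedom). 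Then the relevant sum is a finite geometric-type sum, evaluated directly.

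The gap in your proposal is in the regime $v < v_p(r)$, which is exactly where the mod-$rL$ condition is active. There the decomposition ``solutions at level $v+1$ = Hensel lifts of good solutions at level $v$ (each contributing $p^{\rk L -1}$ new ones) plus a singular correction of size $\ll 1/p$'' breaks down: $\beta$ is a good solution (nondegenerate gradient), and it \emph{does} Hensel-lift to $p^{\rk L - 1}$ solutions of $Q \equiv m$, but the coset constraint $\lambda \equiv \beta \pmod{p^{\min(v+1,\,v_p(r))}}$ kills all but one of them. In other words, the coset condition is not a ``codimension $\geq 1$ singular locus'' phenomenon — there is no singular locus here at all — it is a strict affine constraint that changes the effective number of free variables from $\rk L$ to $0$ for $v < v_p(r)$. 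Consequently the ratios $R_v$ in that range are \emph{not} within $O(1/p)$ of $1$, and $|R_{v+1} - R_v|$ jumps by much more than $O(1/p)$ at $v = v_p(r)$. The ``telescoping with geometrically decaying deviations'' framework you inherit from the $\beta=0$ case simply does not apply. Your later paragraph, which proposes choosing a basis extending $\beta$ to reduce to a lower-rank form, gestures at the right observation — the constraint and the gradient both select out the direction of $\beta$ — but it does not close this gap: it is still framed as a singular-locus argument, and it does not yield the explicit piecewise formula for $N_{\beta,m}(p^v)$ that makes the paper's estimate a two-line calculation. Also, as a side remark, ``lifts uniquely'' should be ``lifts to exactly $p^{\rk L - 1}$ solutions''; the count is $1$ only because of the coset constraint, which is the very thing you are treating as negligible.
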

\begin{proof}
For $p$ coprime to $r$, the result follows from \cite[Proposition 4.1]{sstt}. Hence we can assume that $p$ divides $r$. By assumption, $L$ is unimodular at $p$ and $\beta$ is primitive $r$-torsion, hence every solution $x\in \beta+r(L/p^kL)$ is good in the sense of \cite[Definition 3.1]{hanke}. Let $\delta=1+v_p(r)$
Then using that $\beta\neq 0$ modulo $p$, we get for $v\geq\delta$: 
\[N_{\beta,m}(p^v)=p^{(v-\delta)(r-1)}\cdot N_{\beta,m}(p^\delta)=p^{(v-\delta+1)(r-1)},\]
and $\mu_{p}(\beta,m,v)=\mu_{p}(\beta,m,\delta)=\frac{1}{p}$. As for $v<\delta$, we have $N_{\beta,m}(p^v)=1$. 
Hence: 
\begin{align*}
\left|\omega_p-\sum_{v=0}^{w_p-1}\frac{\mu_p(\beta,m,v)}{\mu_p(\beta,m,w_p)}\right|=\frac{1}{\mu_p(\beta,m,\delta)}\left|\sum_{v=0}^{
\delta-1}\frac{1}{p}-\frac{1}{p^{v(r-1)}}\right|\ll \frac{1}{p}~.
\end{align*}

\end{proof}

\begin{lemma}\label{representation}
    Let $L$ be a maximal lattice and let $r\geq 1$ coprime to the discriminant of $L$. Let $\beta\in L/rL$ be a primitive $r$ torsion element. Then any integer $m$ large enough such that $m\equiv Q(\beta)\pmod{r}$, is representable by $\beta+rL$.
\end{lemma}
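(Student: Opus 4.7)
My plan is to show $m$ is locally represented by the coset $\beta+rL$ at every place of $\Q$ and then invoke a Hasse principle for integer representations by cosets of indefinite lattices of rank $\geq 5$. The necessity of the congruence is immediate: for any $\lambda=\tilde\beta+r\mu\in\beta+rL$ with $\tilde\beta\in L$ a lift of $\beta$,
\[Q(\lambda)=Q(\tilde\beta)+r(\tilde\beta\cdot\mu)+r^2Q(\mu)\equiv Q(\beta)\pmod{r}.\]

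For local representability at a finite prime $p$, I distinguish two cases. If $p\nmid r$, then $r\in\Z_p^{\times}$ and the coset $\beta+rL\otimes\Z_p$ coincides with $L_p$, so the problem reduces to representing $m$ by $L_p$; since $L$ is maximal of rank at least $5$, its Jordan decomposition at $p$ contains a unimodular block of rank at least $3$, and Hensel's lemma produces representations of every $m\in\Z_p$. If $p\mid r$, then $p\nmid\disc(L)$ by hypothesis, so $L_p$ is self-dual. The equation $Q(\tilde\beta+r\mu)=m$ rewrites as
\[(\tilde\beta\cdot\mu)+rQ(\mu)=\frac{m-Q(\tilde\beta)}{r}\in\Z_p,\]
and since $\beta$ is primitive, $\tilde\beta\notin pL_p$; perfectness of the pairing on $L_p$ then forces $\mu\mapsto(\tilde\beta\cdot\mu)$ to be surjective onto $\Z_p$, and a one-variable Hensel argument along a line $\mu=c\mu_1$ with $(\tilde\beta\cdot\mu_1)=1$ yields a solution $\mu\in L_p$. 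At the archimedean place, the signature $(n,2)$ is indefinite, so $Q$ represents every real number on the real coset $\tilde\beta+rL\otimes\R$.

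Finally I invoke the Hasse principle for integer representations by cosets of indefinite integral lattices of rank $\geq 5$, which follows from strong approximation for $\mathrm{Spin}(L_\Q)$ applied to the adelic translate $\tilde\beta+rL\otimes\widehat{\Z}$ (or equivalently from positivity of the Eisenstein Fourier coefficient $c(\beta,m)$ combined with the fact that the genus of $\beta+rL$ consists of a single class in this rank range). This globalizes the local solutions to a $\lambda\in\beta+rL$ with $Q(\lambda)=m$ for all sufficiently large $m\equiv Q(\beta)\pmod r$. The hardest point is verifying the hypotheses of the Hasse principle in this coset setting; this is precisely why one needs $L_p$ self-dual at every $p\mid r$ and $\beta$ primitive at those primes, so that the local conditions defining the coset are preserved by the spin action.
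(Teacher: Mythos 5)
Your proof takes essentially the same local-to-global route that the paper's one-line proof points to (the paper simply asserts ``no local obstruction'' and defers to the argument of \cite[Corollary 4.7]{sstt}), and your local analysis is a genuine expansion of the part the paper leaves implicit. Two small imprecisions are worth noting: at $p=2$ a unimodular block of rank $\geq 3$ does \emph{not} by itself represent all of $\Z_2$ (e.g.\ $x^2+y^2+z^2$ misses $7$), and the correct input is that maximality of an indefinite even lattice of rank $\geq 5$ forces $L_{\Z_2}$ to split off a hyperbolic plane, which is universal; and ``the genus of $\beta+rL$ consists of a single class'' is not generally true --- what one actually invokes is Eichler--Kneser theory (equality of class and spinor genus for indefinite lattices, together with the absence of spinor exceptions in rank $\geq 5$ and positivity of the Eisenstein coefficient), which is precisely what \cite[Corollary 4.7]{sstt} packages. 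Neither issue affects the structure or correctness of the argument.
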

\begin{proof}

There is no local obstruction to finding $m$ and hence the argument of \cite[Corollary 4.7]{sstt} still applies. 
\end{proof}

\subsection{Proof of \Cref{local-infinite}}
We assume in this section that $L$ is maximal lattice, self-dual at primes dividing $r$. Let $\beta \in L/rL$, $m\in Q(\beta)+r\Z$ and let $\phi_{\beta,m}$ be the Green function defined by \Cref{green-function}. 

For $x\in \cD$, define 
\[A(\beta,m,x)=-2\sum_{\underset{|Q(\lambda_x)|\leq 1,Q(\lambda)=1}{\sqrt{m}\lambda\in \beta+rL}}\log(|Q(\lambda_x)|).\]
Then by \cite[Proposition 5.4]{sstt}, we have: 

\[\phi_{\beta,m}(v)=A(\beta,m,x)+O(m^{\frac{b}{2}})~.\]
The reference only proves it for $\beta=0$ and $L$ maximal, but the same proof applies with minor changes. 
\begin{proposition}\label{thAbound}
There exists a subset $S_{\bad} \subset \Z_{>0}$ of logarithmic asymptotic density zero such that for every $m\notin S_{\bad}$, we have
\begin{equation*}
    A(\beta,m,x)=o(m^{\frac{b}{2}}\log(m)).
\end{equation*}
\end{proposition}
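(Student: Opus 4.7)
The plan is to adapt the argument of SSTT, Proposition 5.7, which establishes the analogous statement in the case $\beta=0$ for a maximal lattice $L$, by carrying through each step for the shifted coset $\beta+rL$. By rescaling, $A(\beta,m,x)$ is a weighted count of vectors on the unit hyperboloid $\{Q(\lambda)=1\}$ lying in the coset $\tfrac{1}{\sqrt m}(\beta+rL)$, weighted by $-2\log|Q(\lambda_x)|$ on the slab $|Q(\lambda_x)|\leq 1$. The log weight is only dangerous on the ``light cone'' locus $Q(\lambda_x)\to 0$, so the task is to control the number of near--null lattice vectors of norm $1$ in the coset.

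The main steps I would carry out are as follows. First, make a dyadic decomposition: split the sum according to $|Q(\lambda_x)|\in[2^{-j-1},2^{-j}]$ for $0\le j\lesssim \log m$, so that
\[
|A(\beta,m,x)|\ll \sum_{j=0}^{O(\log m)} j\cdot N_j(\beta,m,x),
\]
where $N_j(\beta,m,x)$ is the number of $\mu\in \beta+rL$ with $Q(\mu)=m$ and $|Q(\mu_x)|\le m\cdot 2^{-j}$. Second, estimate $N_j(\beta,m,x)$ by Poisson summation / theta series attached to the shifted lattice $\beta+rL$: the expected main term for the count of norm-$m$ vectors in $\beta+rL$ in a region of ``solid angle'' $\asymp 2^{-j/2}$ on the sphere is of order $m^{b/2}\cdot 2^{-j/2}$, up to an error governed by the Fourier coefficients of cuspidal theta-type modular forms of weight $\tfrac{b+2}{2}$ attached to $\rho_{rL}$. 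Summing the expected main term against the $j$-weight yields $O(m^{b/2})$, which is acceptable since we only need $o(m^{b/2}\log m)$.

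Third, identify and discard $S_\bad$. The cuspidal error in the count $N_j(\beta,m,x)$, integrated against $j$, is bounded on average (in $m$ with $\sqrt{m/D}\in\Z$ or in dyadic windows) using moment estimates on Fourier coefficients — the Rankin--Selberg convolution of the theta series and its conjugate controls $\sum_{m\le X}|c_{\mathrm{cusp}}(\beta,m)|^2$. A Chebyshev/Markov argument then gives a set $S_\bad$ of $m$'s where the cuspidal contribution exceeds $m^{b/2}\log m/\log\log m$, and this set has logarithmic density zero. Combining with the main-term bound yields the stated $o(m^{b/2}\log m)$ estimate off $S_\bad$.

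The main obstacle will be step three: the SSTT bounds on the cuspidal error are written for vector-valued modular forms attached to the Weil representation of the maximal lattice $L$, whereas here we work at level $r$ with the representation $\rho_{rL}$ and shift $\beta$. One must verify that the Rankin--Selberg moment bound and the equidistribution of $\{\mu/\sqrt m\}$ on $\{Q=1\}$ extend to the shifted coset; since $\beta$ is primitive of $r$-torsion and $r$ is coprime to $\operatorname{disc}(L)$, the theta series $\Theta_{\beta+rL}$ is a well-defined component of a vector-valued form for $\rho_{rL}$, and standard spectral theory on ${\rm O}(L_\R)$ gives the same decomposition into Eisenstein and cuspidal parts, so the moment bounds transfer essentially verbatim. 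The arithmetic input needed beyond SSTT is only \Cref{representation} (to guarantee that $\beta+rL$ actually represents all sufficiently large $m\equiv Q(\beta)\pmod r$), which the authors have already established.
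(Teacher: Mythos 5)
Your plan, in effect, re-derives the SSTT archimedean estimate from scratch for the shifted coset $\beta+rL$: dyadic decomposition of the slab near the light cone, theta-series counts for the coset with a cuspidal error term, Rankin--Selberg second-moment bounds, and a Chebyshev argument to cut out $S_{\bad}$. That is a genuinely different — and considerably heavier — route than the paper's. The paper instead observes that each summand $-2\log|Q(\lambda_x)|$ in $A(\beta,m,x)$ is nonnegative on the slab $|Q(\lambda_x)|\le 1$, and that the cosets $\{\beta+rL : \beta\in L/rL\}$ partition $L$, so
\[
\sum_{\beta\in L/rL}A(\beta,m,x)=A(m,x), \qquad\text{hence}\qquad 0\le A(\beta,m,x)\le A(m,x),
\]
where $A(m,x)$ is the full-lattice quantity already bounded in \cite[Theorem 6.1]{sstt}. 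This majorization makes the proposition an immediate corollary of the known case $\beta=0$, with no need to re-examine equidistribution, theta coefficients, or moment estimates at level $r$. Your approach would presumably succeed, but your step three — verifying that the Rankin--Selberg moment bounds and equidistribution on $\{Q=1\}$ carry over to the coset and to $\rho_{rL}$ — is precisely the technical work the paper's one-line comparison sidesteps, and as written you assert rather than justify that it ``transfers essentially verbatim.'' If you pursue your route you would need to actually supply that argument; the simpler move is to notice the positivity and the partition and reduce to what SSTT already proved.
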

\begin{proof}
Let \[A(m,x)=-2\sum_{\underset{|Q(\lambda_x)|\leq 1,Q(\lambda)=1}{\sqrt{m}\lambda\in L}}\log(|Q(\lambda_x)|)~.\]
    Notice that $A(\beta,m,x)\geq 0$ and that \[\sum_{\beta\in L/rL}A(\beta,m,x)=A(m,x)~.\] 
    Hence 
    \[0\leq A(\beta,m,x)\leq A(m,x).\]
Now we can use \cite[Theorem 6.1]{sstt} to bound $A(m,x)$, yielding the desired result. This proves \Cref{local-infinite}.  
\end{proof}

\section{Non-archimedean estimates}
Our goal in this section is to prove the local non-archimedean estimates \Cref{good-finite}.

Let $\rho:\mathscr{S}\rightarrow\widetilde{\mathcal{M}}_r^{\Sigma}$ be the period map, where $\widetilde{\mathcal{M}}_r^{\Sigma}$ is the toroidal compactification of the integral model of the GSpin Shimura variety associated to $(rL,Q)$. 

\subsection{Good reduction case}
 Let $\mathfrak{P}$ be a prime of good reduction. By the moduli interpretation of $\mathcal{Z}(\beta,m)$, see \cite[Lemma 7.2]{sstt} for a proof, we have: 
 
\[(\mathscr{S}.\mathcal{Z}(\beta,m))=\sum_{n=1}^{\infty}|\{x\, \textrm{lifts to order $n$}, x\in V_\beta(\mathcal{A}_{\mathfrak P}), Q(x)=m\}|~.\]

In particular, 
\[0\leq (\mathscr{S}.\mathcal{Z}(\beta,m))\leq (\mathscr{S}.\mathcal{Z}(m))~,\]
where $\mathcal{Z}(m)\rightarrow \mathcal{M}$ is the special divisor in $\mathcal{M}$

By \cite[Theorem 7.1]{sstt}, we have the estimate: 
\[\sum_{m\in S_{D,X}}(\mathscr{S}.\mathcal{Z}(m))=o(X^{b/2}\log(X)).\]

Hence, combined with the inequality above, we get: 

\[\sum_{m\in S_{D,X}}(\mathscr{S}.\mathcal{Z}(\beta,m))=o(X^{b/2}\log(X))~,\]
which proves   \Cref{good-finite}]
    
\subsection{Bad reduction case: type II}
Let $\mathfrak{P}$ be a prime of bad reduction. The toroidal compactification $\widetilde{\mathcal{M}}_r^{\Sigma}$ has a stratification with two types of boundary components as explained in \cite{tayouboundary}. We will use the results from that paper to analyze the local intersection multiplicities and we focus now on boundary components of type II.
\medskip 

Let $\Upsilon$ be a toroidal stratum representative of type II , $\mathcal{B}^{\Upsilon}$ the corresponding boundary component of type II and we assume in this section that that boundary point $\mathscr{S}(\F_\mathfrak{P})$ lies in $\mathcal{B}^{\Upsilon}(\overline{\mathbb{F}}_p)$.

By \Cref{pull-special-divisors}, we have
\begin{align}\label{inequality-II}    
0\leq (\mathscr{S}.\mathcal{Z}(\beta,m))_{\mathfrak{P}}\leq (\mathscr{S}.\mathcal{Z}(m)))_{\mathfrak{P}}.
\end{align}

Let $D\in \Z_{\geq 1}$. For $X\in \Z_{>0}$, let $S_{D,X}$ denote the set \[\{m\in \Z_{>0}\mid X \leq m<2X,\, \frac{m}{D}\in \Z \cap (\Q^\times)^2,\, (m,N)=1\}.\] 

Then by \cite[Proposition 5.2]{tayouboundary},  we have
\[\sum_{m\in S_{D,X}}(\cY . \cZ(m))_\mathfrak{P}=o(X^{\frac{b+1}{2}}\log X).\]
Combining \Cref{inequality-II} and the previous estimate, we get \Cref{good-finite} in the type II case.

\subsection{Bad reduction case: type III}

Let $(\Xi,\sigma)$ be a toroidal stratum representative of type III where $\sigma$ is a ray. Let $\mathcal{B}^{\Xi,\sigma}$ be the corresponding boundary component of type III and we assume in this section that the boundary point $\mathscr{S}$ lies in $\mathcal{B}^{\Xi,\sigma}(\overline{F}_p)$.

Similarly, we have \Cref{pull-special-divisors} 
\[0\leq (\mathscr{S}.\mathcal{Z}(\beta,m))_\mathfrak{P}\leq (\mathscr{S}.\mathcal{Z}(m))~.\]
Let $D\in \Z_{\geq 1}$ coprime to $N$ and $X\in \Z_{>0}$. Let  $S_{D,X}$ denote the set \[\{m\in \Z_{>0}\mid X \leq m<2X,\, \frac{m}{D}\in \Z \cap (\Q^\times)^2,\,(m,N)=1\}.\] Then we have by \cite[Proposition 5.4]{tayouboundary},
\[\sum_{m\in S_{D,X}}(\cY . \cZ(m))_\fP=o(X^{\frac{b+1}{2}}\log X).\]

Combining the two previous estimates concludes the proof of \Cref{good-finite} in the type III case. 

\begin{remarque}
    As $m\equiv Q(\beta)\pmod{r}$, we see that in order to apply the results proved in \cite{tayouboundary}, we need that $Q(\beta)$ is coprime to $N$.
\end{remarque}

\bibliographystyle{alpha}
\bibliography{bibliographie}

\end{document}